\newtheorem{theorem}{Theorem}
\newtheorem{proposition}[theorem]{Proposition}
\newtheorem{lemma}[theorem]{Lemma}
\newtheorem{define}[theorem]{Definition}
\newtheorem{example}[theorem]{Example}
\newtheorem{assumption}[theorem]{Assumption}
\newtheorem{conjecture}[theorem]{Conjecture}
\newtheorem{remark}[theorem]{Remark}
\setlist[enumerate]{leftmargin=.5in}
\setlist[itemize]{leftmargin=.5in}
\DeclareMathOperator*{\col}{col}
\DeclareMathOperator*{\row}{row}
\DeclareMathOperator*{\diag}{diag}
\DeclareMathOperator*{\rank}{rank}
\DeclareMathOperator{\eps}{\varepsilon}
\def\P{\mathbb{P}}
\def\E{\mathbb{E}}
\def\PH{\textnormal{PH}}
\providecommand\X[1]{\boldsymbol{X_{#1}}}
\providecommand\Z[1]{\boldsymbol{Z_{#1}}}
\providecommand\D[1]{\boldsymbol{D_{#1}}}
\providecommand\phib{\boldsymbol{\emptyset}}
\def\lllongrightarrow{\relbar\joinrel\relbar\joinrel\relbar\joinrel\relbar\joinrel\relbar\joinrel\relbar\joinrel\rightarrow}
\providecommand{\rarrow}[1]{\stackrel{#1}{\lllongrightarrow}}
\definecolor{lblue}{RGB}{0,110,152}
\definecolor{dred}{RGB}{171,67,53}
\newenvironment{proof}{{\it Proof :~}}{\hfill$\diamondsuit$\\}
\begin{document}

\title{Ergodicity analysis and antithetic integral control of a class of stochastic reaction networks with delays}

\author{Corentin Briat\thanks{D-BSSE, ETH-Z\"urich, Switzerland
  (\textit{corentin@briat.info}, www.briat.info).}
and Mustafa Khammash\thanks{D-BSSE, ETH-Z\"urich, Switzerland (\textit{mustafa.khammash@bsse.ethz.ch}, https://www.bsse.ethz.ch/ctsb.)}}

\date{}

\maketitle

\begin{abstract}
Delays are important phenomena arising in a wide variety of real world systems, including biological ones, because of diffusion/propagation effects or as simplifying modeling elements. We propose here to consider delayed stochastic reaction networks, a class of networks that has been relatively few studied until now. The difficulty in analyzing them resides in the fact that their state-space is infinite-dimensional. We demonstrate here that by restricting the delays to be phase-type distributed, one can represent the associated delayed reaction network as a reaction network with finite-dimensional state-space. This can be achieved by suitably adding chemical species and reactions to the delay-free network following a simple algorithm which is fully characterized. Since phase-type distributions are dense in the set of probability distributions, they can approximate any distribution arbitrarily closely and this makes their consideration only a bit restrictive. As the state-space remains finite-dimensional, usual  tools developed for non-delayed reaction network directly apply. In particular, we prove, for unimolecular mass-action reaction networks,  that the delayed stochastic reaction network is ergodic if and only if the delay-free network is ergodic as well. Bimolecular reactions are more difficult to consider but slightly stronger analogous results are nevertheless obtained. These results demonstrate that delays have little to no harm to the ergodicity property of reaction networks as long as the delays are phase-type distributed, and this holds regardless the complexity of their distribution. We also prove that the presence of those delays adds convolution terms in the moment equation but does not change the value of the stationary means compared to the delay-free case. The covariance, however, is influenced by the presence of the delays. Finally, the control of a certain class of delayed stochastic reaction network using a delayed antithetic integral controller is considered. It is proven that this controller achieves its goal provided that the delay-free network satisfy the conditions of ergodicity and output-controllability.\\

\textit{Keywords.}   Stochastic reaction networks; delay systems; ergodicity analysis; antithetic integral control.
\end{abstract}

\section{Introduction}

Delays are omnipresent physical phenomena induced by memory, propagation or transport effects \cite{Kolma:92,Niculescu:01,GuKC:03,Michiels:07bk,Briat:book1,Fridman:14}. They naturally arise in population dynamics \cite{Gopalsamy:92}, ecology \cite{Gopalsamy:88}, epidemiology \cite{Briat:09h}, biology \cite{Bratsun:05,Jansen:15,Fridman:16,Djema:17,Djema:18} and engineering \cite{Kolma:86,Niculescu:01,GuKC:03,Briat:book1}. It is commonly understood that delays have, in general, detrimental effects in engineering as they may lead to instabilities such as oscillations. While this destabilizing effect is undesirable in this setting, their role can be crucial in biology when one wants, for instance, to design oscillators \cite{Bratsun:05,Stricker:08,Mather:09}. In the stochastic setting, delays have indeed been shown to be helpful for generating oscillations \cite{Bratsun:05}, but also to accelerate signaling \cite{Josic:11} and to be responsible for an increase in intrinsic variability \cite{Scott:07}. Delays can be easily incorporated in the dynamics of a deterministic reaction network by simply substituting delay-free terms by delayed ones, thereby turning ordinary differential equations into delay-differential equations, the analysis of which can be readily carried out using well-developed techniques such as Lyapunov-based ones or input-output methods; see e.g. \cite{Kolma:92,Niculescu:01,GuKC:03,Michiels:07bk,Briat:book1}. When the dynamics of the reaction network is inherently stochastic and represented by a continuous-time jump Markov process \cite{Anderson:11}, the introduction of constant deterministic delays in the dynamics is also possible. Those networks can be easily simulated using a simple adaptation of  Gillespie's stochastic simulation algorithm \cite{Bratsun:05}, the next reaction method \cite{Anderson:07} or using delayed continuous-time Markov chains \cite{Guet:12}. Alternatively, it has been shown in \cite{Barrio:13,Leier:14,Falk:18} that certain chains of unimolecular reactions in a reaction network could be equivalently substituted by stochastic time-varying delays whose distributions can exactly computed from the reaction rates. This has led to a drastic speedup in the stochastic simulations.

When stochastic reaction networks modeled as jump Markov processes are considered, it has been shown that the notion ergodicity is a natural notion of stability that can be established using algebraic, graph theoretical and optimization techniques \cite{Briat:13i,Briat:14d,Briat:16cdc,Briat:17ifacBio,Briat:17LAA,Gupta:18}. Ergodicity is the stochastic analogue of having a unique globally attractive fixed point for deterministic dynamics. It can be used to establish moment convergence as well as the property that the population behavior can be deduced from a single trajectory of the Markov process. Checking whether a stochastic reaction network is ergodic amounts to establishing two properties: the irreducibility of the state-space (or a particular subset of it) and the fulfilment of a Foster-Lyapunov condition. While it is quite clear how these conditions could be checked for standard (i.e. non-delayed) stochastic reaction networks, the case of delayed stochastic reaction networks is more complicated. Indeed, since the state-space of a general delayed reaction network is infinite-dimensional, checking the irreducibility of a function-space is way more involved. The Foster-Lyapunov condition which is based on the use of a norm-like function is not easy to generalize but tools from time-delay systems theory, such as Lyapunov-Krasovskii functionals, may need turn out to be useful in this task.

The objective of this paper is to develop a framework for the modeling, the analysis and the control of reaction networks with stochastically time-varying delays. However, our goal is to avoid the consideration of a Markov jump system with infinite-dimensional state-space and remain in the finite-dimensional case. Interestingly, this can be done by assuming that the delays follow a phase-type distribution, a class of distribution arising, for instance, in queueing networks \cite{HarcholBalter:13}, risk theory \cite{Bladt:05}, health-care \cite{Shaw:07} and evolution \cite{Strimmer:01}. As those distributions are dense (in the sense of weak convergence) in the set of all probability distributions on $(0,\infty)$ \cite{Asmussen:03}, they can be used to approximate any delay-distribution with arbitrary precision. In this regard, this class of distributions are theoretically mildly restrictive if we allow those distributions to have an arbitrarily large number of free parameters. Several algorithms are available for approximating a given distribution or for fitting empirical ones; see e.g. \cite{Asmussen:96}. The reason behind the use of such distributions is that they can be exactly represented as an irreducible unimolecular reaction network, meaning that delays can be included in the network by suitably adding extra species and extra reactions, thereby creating an augmented network with a finite, yet possibility high-, dimensional state-space. A procedure for characterizing distributions that can be represented as phase-type ones is proposed along with a constructive method for representing the phase-type distributed delay as a minimal reaction network. Such a minimal network is notably shown to be unique. As a consequence, since the state-space remains finite-dimensional, existing tools can be applied to the augmented network to yield results on delayed reaction networks having delays that are phase-type distributed.

We propose to use the tools developed in \cite{Briat:13i,Briat:15e} in order to establish several results for delayed reaction networks. Using the ergodicity results developed in \cite{Briat:13i}, we prove that a delayed unimolecular network is ergodic if and only if its delay-free counterpart is ergodic as well. This result is interesting for two reasons. The first one is that phase-type distributed delays are harmless in the context of unimolecular networks. The second one is that the network will remain ergodic for any phase-type distributed delays, regardless of the complexity of their distribution. This includes complex distributions approximating arbitrarily closely heavy-tailed distributions or Dirac distributions. In this regard, the computational complexity of checking whether a delayed reaction network is ergodic does not increase as the delay-distribution increases in complexity but only depends on the number of nominal molecular species involved in the delayed network. This result parallels existing ones on the stability of linear positive systems with delays for which it is well-known that the stability is equivalent to that of the delay-free system and is independent of the value of the delay; see e.g. \cite{Haddad:04,Briat:16b}.

Similar results are obtained for bimolecular networks. When there is no delayed bimolecular reactions, it is shown that the ergodicity conditions reduces to those of the delay-free network. So, in this case again, one can see that the delays are not detrimental to the ergodicity of the network and that the complexity of verifying whether a delayed reaction network satisfying those conditions is the same as checking the ergodicity of a bimolecular network. When some delayed bimolecular reactions are present, the conditions of the delay-free case are not fully recovered but only a slight variations of them, not necessarily more restrictive. In this case also, the results can be associated with an ergodicity test for a delay-free network having a lower computational complexity than the augmented one.

It is then shown that phase-type distributed delays introduce convolution terms in the dynamics of the species of the delayed reaction network with kernels corresponding to the distributions of the delays. Interestingly, we also prove, under the ergodicity assumption, that the mean stationary value coincides with that of the delay-free network.

Finally, we address the problem of controlling delayed reaction networks using the antithetic integral controller proposed in \cite{Briat:15e}. We notably generalize the controller to include delayed reactions in the actuation and the measurement parts of the controller. We show that the delayed reaction network satisfy the ergodicity and output-controllability conditions if and only if the delay-free reaction satisfies the very same conditions. This result parallels those obtained for the ergodicity analysis. In this regard, if the delay-free network verifies the ergodicity and output-controllability conditions then the delayed network will also verify them, regardless of the complexity of the delay distributions.\\

\noindent\textbf{Outline.} Some preliminaries on reaction networks are first given in Section \ref{sec:preliminaries}. Phase-type distributed delays are introduced in Section \ref{sec:delays} and fully characterized in terms of algebraic conditions. A constructive procedure for building the associated reaction network is also provided. Section \ref{sec:delayednetworks} introduces delayed reaction with phase-type distributed delays. Ergodicity conditions for those networks are provided in Section \ref{sec:ergodicity}. The associated moments equation is briefly studied in Section \ref{sec:moments}. Finally, conditions ensuring their control using an antithetic integral controller are obtained in Section \ref{sec:AIC}. Concluding discussions are provided in Section \ref{sec:conclusion}.\\

\noindent\textbf{Notations.} The cones of positive and nonnegative $d$-dimensional vectors are denoted by $\mathbb{R}^d_{>0}$ and $\mathbb{R}^d_{\ge0}$, respectively, whereas the set of nonnegative integers is denoted by $\mathbb{Z}_{\ge0}$. The vector $\mathds{1}$ is the vector of ones. The operators $\textstyle\diag_i(x_i)=\diag(x_1,\ldots,x_n)$, $\textstyle\col_i(x_i)=\col(x_1,\ldots,x_n)$ and $\textstyle\row_i(x_i)=\row(x_1,\ldots,x_n)$  denote the matrices consisting of placing the elements diagonally, vertically and horizontally, respectively. A square matrix $M\in\mathbb{R}^{d\times d}$ is said to be Hurwitz stable if all its eigenvalues have negative real part. The matrix $M$ is said to be Metzler if all its off-diagonal elements are nonnegative. The standard basis of $\mathbb{R}^n$ is denoted by the vectors $e_1,\ldots,e_n$.

\section{Preliminaries on stochastic reaction networks}\label{sec:preliminaries}

\subsection{Stochastic reaction networks without delays}

A reaction network $(\X{},\mathcal{R})$ is a set of $d$ molecular species $\X{}=(\X{1},\ldots,\X{d})$ interacting through $K$ reaction channels $\mathcal{R}:=\{\mathcal{R}_1,\ldots,\mathcal{R}_K\}$. For each of reaction, we denote the stoichiometric vector of the $k$-th reaction by $\zeta_k\in\mathbb{Z}^d$ and the propensity of the $k$-th reaction by $\lambda_k(\cdot)$ where $\lambda_k:\mathbb{Z}_{\ge0}^d\to\mathbb{R}_{\ge0}$ with the additional condition that if $x+\zeta_k\notin\mathbb{Z}_{\ge0}^d$ then $\lambda_k(x)=0$. Under the well-mixed assumption, the process $(X(t))_{t\ge0}=((X_1(t),\ldots,X_d(t))_{t\ge0}$ describing the evolution over time of the molecular counts trajectory is a Markov process. To this Markov process, we associate a state-space $\mathcal{S}$ defined as the subset of $\mathbb{Z}_{\ge0}^d$ that is forward invariant and minimal, that is, it is the smallest set $\mathcal{S}$ such that if $X(0)=x_0\in\mathcal{S}$, then $X_{x_0}(t)\in\mathcal{S}$ for all $t\ge0$. Such a definition is relevant in the context of the study of the ergodicity properties of networks; for more remarks see \cite{Briat:13i}.

Let $\mathcal{P}(\mathcal{S})$ be the set of all probability distributions on the state-space $\mathcal{S}$ which is endowed with the weak topology. We can then define the probability
\begin{equation}
  p_{x_0}(t,x)=\P(X_{x_0}(t)=x)
\end{equation}
where $x_0,x\in\mathcal{S}$. Defining then $p_{x_0}(t)(A):=\textstyle\sum_{y\in A}p_{x_0}(t,y)$ where $A\subset\mathcal{S}$, then $p_{x_0}(t)$ can be understood as an element of $\mathcal{S}$ which coincides, actually, with the distribution of the Markov process $(X(s))_{s\ge0}$ at time $t$. The evolution of $p_{x_0}(t)$ is governed by the Chemical Master Equation (CME, or Forward Kolmogorov Equation) given by
\begin{equation}\label{eq:CME}
  \dfrac{dp_{x_0}(t,x)}{dt}=\sum_{i=1}^K\left[\lambda_k(x-\zeta_k)p_{x_0}(t,x-\zeta_k)-\lambda_k(x)p_{x_0}(t,x)\right].
\end{equation}
where $p(0,x)=\delta(x-x_0)$ is the Kronecker $\delta$ function. In general, the CME is not analytically solvable except in some particular simple cases; see e.g. \cite{Jahnke:07} and the references therein. This is the reason why numerical solutions are of interest; see e.g. \cite{Kazeev:14,Gupta:17}. Alternatively, we can write the so-called random time change representation of the system which takes the form
\begin{equation}
  X(t)=X(0)+\sum_{i=1}^K\zeta_iY_i\left(\int_0^t\lambda_i(X(s))ds\right)
\end{equation}
where the $Y_i$'s are independent unit-rate Poisson processes and $X(s)=0$, $s<0$. Finally, it is important to define the generator $\mathbb{A}$ of the Markov process representing the reaction network $(\X{},\mathcal{R})$:
\begin{equation}\label{eq:generator}
  \mathbb{A}f(x)=\sum_{i=1}^K\lambda_i(x)\left[f(x+\zeta_i)-f(x)\right]
\end{equation}
for all functions $f:\mathbb{Z}_{\ge0}^d\to\mathbb{R}$ in the domain of $\mathbb{A}$. For such functions $f$, Dynkin's formula is valid and we have that
\begin{equation}
  \E[f(X(t))]=\E[f(X(s))]+\int_s^t\E[\mathbb{A}f(X(\theta))]d\theta
\end{equation}
for all $s\le t$.

\subsection{Stochastic reaction networks with delays}

Reaction networks with delays are not new and have been studied in the past, both in the deterministic \cite{Mackey:77,Bliss:82,Glass:88,Gopalsamy:92,Ruan:06} and stochastic \cite{Bratsun:05,Anderson:07,Jansen:15} settings. Let $\tau_k$ be the delay of reaction $k$ and decompose the stoichiometric vector $\zeta_k$ as $\zeta_k=\zeta_k^r-\zeta_\ell^\ell$ where the $\zeta_k^r\in\mathbb{Z}^d_{\ge0}$ and $\zeta_k^\ell\in\mathbb{Z}^d_{\ge0}$ are the right- and left-stoichiometric vector of the $k$-th reaction. Then, we can decompose each delayed reaction as a sequence of two reactions. The first one happens instantaneously, i.e. when the $k$-th reaction fires (note that the propensities always depend on the current state $X(t)$), and changes the state value by $x\mapsto x-\zeta_k^\ell$. The second one occurs after $\tau_k$ seconds and changes the state value by $x\mapsto x+\zeta^r_k$. This temporal behavior is difficult to capture in the chemical master equation or in the generator without extending the state-space. However, this can be easily incorporated in the random time change representation of the system as it is a temporal characterization of the process \cite{Anderson:11}.  That is, we have that
\begin{equation}
  X(t)=X(0)+\sum_{i=1}^K\left[-\zeta_i^\ell Y_i\left(\int_0^t\lambda_i(X(s))ds\right)+\zeta_i^r Y_i\left(\int_0^{t-\tau_i}\lambda_i(X(s))ds\right)\right].
\end{equation}
where the $Y_i$'s are independent unit-rate Poisson processes and $X(s)=0$, $s<0$. Note the coupling of the instantaneous part and the delayed part of the reaction through the same Poisson process. However, depending on the considered network, the above decomposition may not be the best one to represent a delayed reaction as the species on the left-hand side on the reaction are not necessarily destroyed (e.g. in catalytic reactions). This will be further discussed in Section \ref{sec:delayednetworks}.

\subsection{Types of delays}

We discuss here the different types of delays that can be considered. We have to distinguish here two problems: incorporating delays for simulation purposes and incorporating delays for modeling and analysis purposes. The first one is usually easier than the second one as the problem is of computational nature and existing algorithms can be easily adapted to include delayed reactions. The difficulty with the modeling and the analysis problems is that the state-space of a delayed reaction network essentially becomes infinite dimensional and this, therefore, makes irreducibility analysis of the network more difficult.\\

\noindent\textbf{Deterministic and stochastic constant delays.} Certainly the most natural type of delays that comes to mind are deterministic constant delays. As briefly stated in \cite{Anderson:11}, the random time change representation and Gillespie's stochastic simulation algorithm can be adapted to cope with such delays. On the modeling and analysis side, the state-space becomes infinite-dimensional which leads to difficulty for extending the chemical master equation, checking the irreducibility of the state-space and the positive recurrence of the Markov process. The case of stochastic constant delays is analogous. At the beginning of the simulation, the delays are drawn from the distribution and kept constant until the simulation is over. Regarding the modeling and analysis, the problem is the same as for deterministic constant delays.\\

\noindent\textbf{Deterministic time-varying delays.} Existing algorithms should, in principle, be extended to account for deterministic time-varying delays. However, this may lead to a high increase of the computational complexity. Regarding modeling and analysis are more complex than their constant counterparts and, therefore, lead to, at least, the same difficulties. Additionally, the ergodicity of such deterministically time-varying system may not be easy to define.\\

\noindent\textbf{Stochastic time-varying delays.} Stochastic time-varying delays, as we shall see later, are much more natural to consider in this context provided that their distribution is of some particular kind. It has been shown in \cite{Barrio:13,Leier:14} that bidirectional chains of unimolecular conversion reactions could be substituted, in simulation, by a time-varying stochastic delay, the distribution of which being computed from the reaction rates and the topology of the reaction network to be reduced. This has led to dramatic improvements in terms of simulation time. In this regard, those delays should be easily incorporable in the reaction network model and should also facilitate their analysis. This will be shown to be true when the delay distribution belongs to the class of phase-type distributions; see e.g. \cite{Asmussen:03}. It will be notably demonstrated that delayed reaction networks can be exactly reformulated as a non-delayed network with finite-dimensional state-space.

\subsection{Ergodicity analysis of stochastic reaction networks}

The ergodicity of stochastic chemical reaction networks is the important property that the CME has a unique attractive fixed-point. This is formalized in the definition below:
\begin{define}
  Let us consider the stochastic reaction network $(\X{},\mathcal{R})$. The associated Markov process, described by the CME \eqref{eq:CME}, is \textit{ergodic} if  there exists a unique stationary distribution $p^*$ such that $$p_{x_0}(t,x)\to p^*(x)$$ as $t\to\infty$ for all initial conditions $p(0,x)=\delta(x-x_0)$, $x_0\in\mathbb{Z}_{\ge0}^d$.  Moreover, when the convergence is exponentially fast, then the Markov process is \emph{exponentially ergodic}.
\end{define}
Several conditions have been provided in the literature for checking the ergodicity of Markov processes. An interesting approach is based on the so-called Foster-Lyapunov functions introduced in \cite{Meyn:93}. This approach has been specialized to the case of stochastic reaction networks in \cite{Briat:13i,Milias:14}. Additional results on the stability of reaction networks have also been provided in \cite{Engblom:12,Rathinam:13,Anderson:17}. We have the following general result that we specialize to our setup \cite{Meyn:93}:
\begin{theorem}\label{prop:meyn}\
  Let us consider the stochastic reaction network $(\X{},\mathcal{R})$ and assume that its state-space is irreducible. Let $\mathbb{A}$ be the generator of the underlying Markov process as defined in \eqref{eq:generator} and let $C\subset\mathbb{Z}_{\ge0}^d$ be some compact set. Assume further that there exists a function $V:\mathbb{Z}_{\ge0}^d\mapsto\mathbb{R}$, a function $f:\mathbb{Z}_{\ge0}^d\mapsto\mathbb{R}$, $f(x)\ge1$ for all $x\in\mathbb{Z}_{\ge0}^d$ and scalars  $c,d>0$ such that\hspace{1in}
  {\begin{enumerate}[label=({\alph*})]
    \item $V$ is norm-like (i.e. nonnegative and radially unbounded)
    \item $\mathbb{A}V(x)\le -cf(x)+d1_{C}(x)$ where $1_{C}$ is the indicator function of the set $C$.
  \end{enumerate}}
  Then, the Markov process is ergodic where the convergence is in total variation.  Moreover, if the above conditions hold with $f(x)=V(x)$, then the Markov process is exponentially ergodic. In any of those cases, the function $V$ is said to be Foster-Lyapunov function.
\end{theorem}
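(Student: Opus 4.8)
The plan is to recognize this as a specialization of the general Foster--Lyapunov theory of Meyn and Tweedie \cite{Meyn:93} to continuous-time Markov jump processes on the countable state-space $\mathcal{S}\subseteq\mathbb{Z}_{\ge0}^d$, where the topology is discrete and hence any compact set $C$ is in fact finite. The core mechanism is that condition (b) forces a strict negative expected drift of $V$ outside $C$, which through Dynkin's formula translates into control of the return times to $C$ and ultimately into positive recurrence.

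First I would establish non-explosivity. Since $f(x)\ge1$ and $\mathbf{1}_C(x)\le1$, condition (b) gives $\mathbb{A}V(x)\le d$ uniformly. Applying Dynkin's formula to $V$ along the stopping times $\tau_n=\inf\{t:V(X(t))\ge n\}$ and using that $V$ is norm-like, one obtains that $\E[V(X(t\wedge\tau_n))]$ grows at most linearly in $t$, which precludes escape to infinity in finite time and makes the process regular.

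Next I would invoke Foster's criterion for positive recurrence. Outside $C$ we have $\mathbb{A}V(x)\le-cf(x)\le-c<0$, so writing $\tau_C$ for the first return time to $C$ and running the optional-sampling/Dynkin argument yields $\E_x[\tau_C]\le V(x)/c<\infty$. Combined with irreducibility and the finiteness of $C$, this establishes that the process is positive Harris recurrent, which guarantees a unique stationary distribution $p^*$. The convergence $p_{x_0}(t,\cdot)\to p^*$ in total variation then follows from the ergodic theorem for irreducible, positive-recurrent continuous-time chains, using that such chains avoid the periodicity obstructions present in discrete time.

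Finally, for the exponential statement with $f=V$, condition (b) becomes the geometric drift inequality $\mathbb{A}V\le-cV+d\mathbf{1}_C$. This is precisely the $(V4)$-type condition of \cite{Meyn:93}, and the associated theorem delivers a bound of the form $\|p_{x_0}(t,\cdot)-p^*\|_{TV}\le R\,V(x_0)\,e^{-\gamma t}$ for constants $R,\gamma>0$, i.e. exponential (indeed $V$-uniform) ergodicity. The main subtlety throughout is verifying the petiteness of $C$ required to run the general Meyn--Tweedie machinery; I expect this to be the only genuinely delicate point, but in the present discrete-state setting it is painless since finite sets are trivially petite, so the argument reduces essentially to Foster's classical criterion sharpened by the geometric drift.
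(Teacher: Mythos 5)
The paper offers no proof of this theorem at all---it is quoted directly as a specialization of the Foster--Lyapunov results of \cite{Meyn:93}---and your proposal is a correct reconstruction of precisely that machinery: non-explosivity from the uniform bound $\mathbb{A}V\le d$, Foster's criterion via the localized Dynkin/optional-sampling argument giving $\E_x[\tau_C]\le V(x)/c$ and hence positive Harris recurrence, total-variation convergence using the automatic aperiodicity of continuous-time chains, and the $(V4)$ geometric drift condition for the exponential case, with the petiteness of $C$ correctly dismissed as trivial since compact subsets of $\mathbb{Z}_{\ge0}^d$ are finite. You are therefore on the same route as the paper, simply filling in the steps it delegates to the citation.
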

Equivalent conditions for the exponential ergodicity are $V(0)=0$, $V(x)>0$ for all $x\in\mathbb{R}^d$, $V$ radially unbounded and $\mathbb{A}V(x)\le c_1-c_2V(x)$ for some $c_1\ge0,c_2>0$ and for all $x\in\mathbb{Z}_{\ge0}^d$. This reformulation similar to standard Lyapunov conditions will be preferred in this paper for simplicity.

In the case of stochastic mass-action reaction networks with zeroth-, first- and second-order reactions, it is possible to exploit the positivity of the dynamics of the system and consider a linear copositive Foster-Lyapunov function of the form $V(x)=v^Tx$ where $v\in\mathbb{R}^d_{>0}$. Before stating the main result, we need few definitions. We define the stoichiometric matrix $S$ of this network as
\begin{equation}
  S=\begin{bmatrix}
    S_0 & S_u & S_b
  \end{bmatrix}\in\mathbb{Z}^{d\times K}
\end{equation}
 where $S_0$ corresponds to zeroth order reactions, $S_u$ to first order reactions and $S_b$ to second order reactions. Correspondingly, we define the  propensity functions associated with the zeroth- and first-order reactions as $\lambda_0\ge0$ and $\lambda_u(x)=W_ux$ where $W_u$ is an appropriate nonnegative real-valued matrix. We also make the following assumption:
 \begin{assumption}\label{hyp:1}
   The network is open in the sense that it has no closed component and the state-space of the associated Markov process is irreducible.
 \end{assumption}
The assumption on the openness of the network is considered to simplify the exposition of the results but does not make them more restrictive since establishing the ergodicity of closed-components simply follows from the irreducibility of their state-space.
The irreducibility is also assumed here as this is not the main topic of this paper. However, it seems then important to clarify how this can be checked. First of all, note that given a network topology, the irreducibility of the state-space is a structural property in the sense that if it holds for a given network with certain reaction rates, it will also hold for all possible positive values of the reaction rates. Zero values, if considered, will need to be taken care separately in order to make sure that the removal of each of the corresponding reactions do not destroy the irreducibility property of the state-space. It has been proven that the irreducibility of  reaction networks can be checked by solving a linear program and a simple linear algebraic condition \cite{Gupta:18}. In this regard, proving the ergodicity of reaction networks is possible using simple algebraic and computational methods. Note, however, that the conditions are in general sufficient only, but it has been emphasized in \cite{Briat:13i} that they have successfully been able to establish the ergodicity of several typical reaction networks considered in the literature.

We first consider the case unimolecular networks and provide the following result which consists of a slight generalization of the one in  \cite{Briat:13i}:
\begin{theorem}[Unimolecular reaction networks]\label{thm:uni}
   Let us consider stochastic reaction network $(\X{},\mathcal{R})$ with zeroth- and first-order mass-action kinetics which satisfies Assumption \ref{hyp:1}. Then, the following statements are equivalent:
   \begin{enumerate}[label=({\alph*})]
    \item There exists a vector $v\in\mathbb{R}^d_{>0}$ for which the function $V(x)=v^Tx$ is a Foster-Lyapunov function for the corresponding Markov process.
     \item There exists a vector $v\in\mathbb{R}_{>0}^d$ such that $v^TS_uW_u<0$.
     \item The matrix $S_uW_u$ is Hurwitz stable (i.e. all the eigenvalues have negative real-part).
     \item The Markov process describing the reaction network is exponentially ergodic.
   \end{enumerate}
   Moreover, the stationary distribution is light-tailed and, as a consequence, all the moments are bounded and globally converging to their unique stationary value.
\end{theorem}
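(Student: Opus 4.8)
The plan is to prove the four-way equivalence through a single generator computation, a Metzler-matrix lemma, and the Foster--Lyapunov theorem, closing the loop with a moment-divergence argument. First I would compute the action of the generator \eqref{eq:generator} on the candidate $V(x)=v^Tx$. Since the zeroth-order propensities are constant and the first-order ones are collected in $\lambda_u(x)=W_ux$, the telescoping $V(x+\zeta_i)-V(x)=v^T\zeta_i$ gives $\mathbb{A}V(x)=v^TS_0\lambda_0+v^TS_uW_ux$. To check the exponential Foster--Lyapunov condition $\mathbb{A}V(x)\le c_1-c_2V(x)$, I would absorb the constant term $v^TS_0\lambda_0\ge0$ into $c_1$, so that the inequality reduces to $(v^TS_uW_u+c_2v^T)x\le0$ for all $x\in\mathbb{Z}_{\ge0}^d$. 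As this must hold on the whole nonnegative orthant, it is equivalent to the row inequality $v^TS_uW_u\le-c_2v^T$ for some $c_2>0$, i.e. to $v^TS_uW_u<0$. Hence (a) and (b) are essentially the same statement, the only extra content being that $V$ is norm-like, which is immediate since $v>0$.

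Next, for (b)$\Leftrightarrow$(c) I would first argue that $A:=S_uW_u$ is Metzler. This follows from the structure of first-order mass-action reactions: a reaction fired by $\X{j}$ contributes its nonnegative production stoichiometry to column $j$ of $A$ and subtracts the consumed reactant on the diagonal, so every off-diagonal entry of $A$ is a sum of nonnegative terms. I would then invoke the standard characterization of Hurwitz stability for Metzler matrices: a Metzler $A$ is Hurwitz if and only if there exists $v\in\mathbb{R}^d_{>0}$ with $v^TA<0$ (equivalently $Au<0$ for some $u>0$), a Perron--Frobenius/linear-programming fact. This yields (b)$\Leftrightarrow$(c) directly.

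The forward implication (a)/(b)/(c)$\Rightarrow$(d) would then be obtained by feeding the constructed $V(x)=v^Tx$, together with the irreducibility granted by Assumption \ref{hyp:1}, into Theorem \ref{prop:meyn}: since the drift condition holds with $f=V$, the process is exponentially ergodic. For the converse (d)$\Rightarrow$(c) I would argue by contraposition. If $A$ is not Hurwitz, then being Metzler its spectral abscissa $\mu=\max_i\mathrm{Re}\,\lambda_i(A)\ge0$ is a genuine eigenvalue carrying a nonnegative left eigenvector $w\ge0$. Because the mean equation for unimolecular kinetics is closed and linear, $\dot m(t)=Am(t)+b$ with $b:=S_0\lambda_0\ge0$, projecting onto $w$ gives $\tfrac{d}{dt}(w^Tm)=\mu\,w^Tm+w^Tb$, whose solution cannot converge to a finite limit; the stationary mean therefore fails to exist, contradicting ergodicity. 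I expect the genuine obstacle to be the marginal case $\mu=0$ with $w^Tb=0$, where the projected mean is merely constant rather than divergent; here I would use the openness of the network (Assumption \ref{hyp:1}) to exclude a conserved, unfed direction and force an actual blow-up of some moment.

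Finally, for the light-tail claim I would exhibit an exponential Foster--Lyapunov function, for instance $V(x)=\sum_j(e^{w_jx_j}-1)$ with $w>0$ chosen sufficiently small, and use that the unimolecular generator maps such exponentials to expressions controllable by $V$ itself, yielding $\mathbb{A}V\le c_1-c_2V$; this gives a finite exponential moment, hence light tails and finiteness of all moments. The global exponential convergence of the moments to their stationary values then follows from the fact that, for zeroth- and first-order kinetics, the moment equations are closed and linear with a system matrix built from the Hurwitz matrix $A$, so each moment solves a stable linear ordinary differential equation.
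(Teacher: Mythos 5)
Your proposal is correct and follows essentially the same route as the paper: the same generator computation reducing (a) to the row inequality $v^TS_uW_u<0$, the same Metzler/Perron--Frobenius characterization for (b)$\Leftrightarrow$(c), Theorem \ref{prop:meyn} for (d), and the same closed linear first-moment equation (with the no-closed-components assumption handling the marginal spectral-abscissa-zero case) for the converse. The only divergence is cosmetic: you sketch a direct exponential Foster--Lyapunov argument for light-tailedness where the paper simply cites \cite{Briat:13i}, and you are somewhat more explicit than the paper about the degenerate case $\mu=0$, $w^Tb=0$.
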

 \begin{proof}
   To prove the equivalence between the statements (a) and (b), we consider the modified versions of the conditions in Theorem \ref{prop:meyn} with the function $V(x)=v^Tx$, $v>0$. We get that
   \begin{equation}
     \mathbb{A}V(x)=v^T(S_uW_ux+S_0\lambda_0).
   \end{equation}
   Clearly if $v^TS_uW_u<0$, then there exists a $c_2>0$ such that  $\mathbb{A}V(x)\le-cV(x)+c_1$ where $c_1=v^TS_0\lambda_0$.  This proves the equivalence. The equivalence with the statement (c) comes from the fact that $S_uW_u$ is Metzler and that the condition in the statement (b) is a necessary and sufficient condition for its Hurwitz stability. The implication of the statement (d) from the statement (a) follows from Theorem \ref{prop:meyn}. To prove the implication of the statement (c) from the statement (d), first note that the dynamics of the first-order moments is given by
   \begin{equation}
     \dfrac{d\E[X(t)]}{dt}=S_uW_u\E[X(t)]+S_0\lambda_0.
   \end{equation}
   Since the network is (exponentially) ergodic, then the first order moments need to converge to a unique steady-state value and, as the network has no closed components, this can only be the case if the matrix $S_uW_u$ is Hurwitz stable. The conclusion on the light-tailedness follows from \cite{Briat:13i}.
 \end{proof}

 \begin{remark}
   When the network has irreducible closed components, one can simply remove those components from the network and apply the above result to the reduced network. In this case, the matrix $S_uW_u$ will correspond to the matrix describing the interactions between the species in the open part of the network.
 \end{remark}

The case of bimolecular networks is a bit more involved but one existing result shows that their analysis may not be too far off from the analysis of unimolecular networks \cite{Briat:13i}:
\begin{theorem}[Bimolecular reaction networks]\label{thm:bi}
  Let us consider a stochastic reaction network $(\X{},\mathcal{R})$ with zeroth-, first- and  second-order mass-action kinetics satisfying Assumption \ref{hyp:1}. Assume further that one of the following equivalent statements holds:
  \begin{enumerate}[label=({\alph*})]
    \item There exists a $v\in\mathbb{R}^d_{>0}$ such that $v^TS_b=0$ for which the function $V(x)=v^Tx$ is a Foster-Lyapunov function for the corresponding Markov process.
        \item There exists a $v\in\mathbb{R}^d_{>0}$ such that the conditions
        \begin{equation}
    v^TS_uW_u<0\textnormal{ and } v^TS_b=0
  \end{equation}
  hold.
  \end{enumerate}
  Then, the reaction network is exponentially ergodic and the stationary distribution is light-tailed.
\end{theorem}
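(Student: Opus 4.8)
The plan is to show that the single algebraic equality $v^TS_b=0$ collapses the bimolecular problem onto the affine structure already treated in Theorem \ref{thm:uni}, after which the Foster-Lyapunov argument behind Theorem \ref{prop:meyn} applies essentially verbatim. First I would evaluate the generator \eqref{eq:generator} on the candidate $V(x)=v^Tx$. Writing $\lambda_b(x)$ for the vector of quadratic mass-action propensities of the second-order reactions, linearity of $V$ gives
\begin{equation}
  \mathbb{A}V(x)=v^TS_0\lambda_0+v^TS_uW_ux+v^TS_b\lambda_b(x),
\end{equation}
so the second-order reactions enter \emph{only} through $v^TS_b\lambda_b(x)$, which is the sole source of quadratic growth. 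The key observation is that $v^TS_b=0$ is equivalent to $v^T\zeta_k=0$ for every second-order reaction $k$, whence $v^TS_b\lambda_b(x)\equiv0$: the quadratic term vanishes identically. What remains, $\mathbb{A}V(x)=v^TS_uW_ux+v^TS_0\lambda_0$, is exactly the affine expression encountered in the proof of Theorem \ref{thm:uni}.

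With this reduction the equivalence of (a) and (b) follows at once. For (b) $\Rightarrow$ (a), the componentwise negativity $v^TS_uW_u<0$ together with $v>0$ lets me set $c_2:=\min_j\big(-(v^TS_uW_u)_j/v_j\big)>0$ and $c_1:=v^TS_0\lambda_0\ge0$, giving $\mathbb{A}V(x)\le c_1-c_2V(x)$ for all $x\in\mathbb{Z}_{\ge0}^d$; being linear with $v>0$, the function $V$ is nonnegative, vanishes only at the origin and is radially unbounded, hence it is a Foster-Lyapunov function. For (a) $\Rightarrow$ (b), statement (a) already supplies $v^TS_b=0$, so $\mathbb{A}V$ is the same affine map; testing the inequality $\mathbb{A}V(x)\le c_1-c_2V(x)$ along the rays $x=Ne_j$ and letting $N\to\infty$ after dividing by $N$ forces $(v^TS_uW_u)_j\le-c_2v_j<0$ for each $j$, i.e. $v^TS_uW_u<0$.

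Finally, either equivalent condition yields a linear Foster-Lyapunov function, so Theorem \ref{prop:meyn} in its exponential form delivers exponential ergodicity, and light-tailedness of the stationary distribution follows exactly as in the unimolecular case from \cite{Briat:13i}. The conceptual crux---and the only genuinely bimolecular step---is recognizing that $v^TS_b=0$ \emph{annihilates} the quadratic contribution to $\mathbb{A}V$ rather than merely bounding it; everything downstream is the unimolecular argument. The subtlety to handle with care is precisely that this cancellation must be exact and not merely asymptotic, which is why the hypothesis is the equality $v^TS_b=0$: any weaker sign condition would leave residual quadratic growth that no linear $V$ could dominate.
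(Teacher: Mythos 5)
Your proof is correct and follows essentially the route the paper itself takes: the paper states Theorem \ref{thm:bi} without proof (deferring to \cite{Briat:13i}), and your argument is precisely the proof of Theorem \ref{thm:uni} augmented by the key observation that $v^TS_b=0$ annihilates the quadratic term $v^TS_b\lambda_b(x)$ in $\mathbb{A}V(x)$, which is the intended reduction. One caveat on your closing commentary only: the claim that any weaker sign condition ``would leave residual quadratic growth that no linear $V$ could dominate'' is contradicted by the Remark immediately following the theorem --- under $v^TS_b\le0$ the residual quadratic term is nonpositive, so it reinforces rather than obstructs the drift inequality, and exponential ergodicity survives; the exact equality is needed only for the light-tailedness conclusion.
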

\begin{remark}
  If the condition $v^TS_b=0$ is relaxed to $v^TS_b\le0$ where at least one entry is nonzero, we cannot conclude anymore on the light-tailedness of the stationary distribution but the exponentially ergodicity property still holds.
\end{remark}


Theorem \ref{thm:uni} and Theorem \ref{thm:bi} are interesting for different reasons. The first one is that they can be stated in a very accessible way using elementary linear algebra concepts. The second one is that the conditions can be numerically verified even for very large systems since it belongs to the tractable class of finite-dimensional linear programming problems for which many efficient algorithms exist; see e.g. \cite{Boyd:04}. It is important to mention that the number of constraints and variables scale linearly as a function of the number of species, not the number of reactions which can be typically much higher.%

\subsection{Antithetic integral control of stochastic reaction networks}

The antithetic integral controller has been introduced in \cite{Briat:15e} with the aim of developing an integral control theory for stochastic biochemical networks. Integral control is a cornerstone of control theory and control engineering as it allows to steer the output of a given system towards a desired constant set-point and to regulate this output around this value despite the presence of constant disturbances acting on the system. The idea behind the antithetic integral control is that it needs to be represented as a set of chemical reactions in order to be implemented in-vivo, such as in bacteria \cite{Aoki:19}. This controller takes the form of a (stochastic) reaction network $(\X{}\cup\Z{},\mathcal{R}_{\textnormal{AIC}})$ with species $\Z{}:=\{\Z{1},\Z{2}\}$ and reactions
\begin{equation}\label{eq:AIC}
  \Z{1}\rarrow{k}\Z{1}+\X{1}, \X{\ell}\rarrow{\theta}\X{\ell}+\Z{2},\phib\rarrow{\mu}\Z{1},\Z{2}+\Z{1}\rarrow{\eta}\phib
\end{equation}
where $\Z{1}$ and $\Z{2}$ are the actuating and the sensing species, respectively. The species $\X{\ell}$ is the measured/controlled species we would like to control by acting on the production rate of the actuated species $\X{1}$. The first reaction is referred to as the actuation reaction since it catalytically produces one molecule of the actuated species with a rate proportional to the actuating species. Symmetrically, the second reaction is the sensing reaction since it catalytically produces one molecule of the sensing species with a rate proportional to the measured species. The third reaction is the reference reaction since it sets part of the set-point for the stationary mean of the controlled species population. Finally, the last reaction is the comparison as it compares the populations of the controller species and acts as a nonlinear subtraction operator while, at the same time, closing the loop. Without this reaction the interconnected network would not be in closed loop since the populations of the controller species would be completely uncorrelated.

When the closed-loop network $(\X{}\cup\Z{},\mathcal{R}\cup\mathcal{R}_{\textnormal{AIC}})$ is ergodic, a quick inspection at the first-order moment dynamics allow us to state that
\begin{equation}
  \E[X_\ell(t)]\to\E_\pi[X_\ell]:=\mu/\theta\ \textnormal{as }t\to\infty
\end{equation}
where $\E_\pi$ denotes the expectation operator at stationarity. In this regard, the antithetic integral controller allows to steer to mean population of the controlled species to a desired set-point. It also able to achieve perfect adaptation for the closed-loop dynamics provided that the set-point is achievable; i.e. there must exist a positive steady-state for the closed-loop dynamics for which we have $\E_\pi[X_\ell]=\mu/\theta$.

We have the following result in the case of unimolecular networks with mass-action kinetics which is a slight extension of the one in \cite{Briat:15e}:
\begin{proposition}
  Let us consider the stochastic reaction network $(\X{},\mathcal{R})$ with first-order mass-action kinetics satisfying Assumption \ref{hyp:1}. Then, the following statements are equivalent:
  \begin{enumerate}[label=({\alph*})]
    \item there exist vectors $v\in\mathbb{R}^d_{>0}$ and $w\in\mathbb{R}^d_{\ge0}$, $w_1,w_\ell>0$,
  \begin{enumerate}[label=({\roman*})]
    \item $v^TS_uW_u<0$,
    \item $w^TS_uW_u+e_\ell^T=0$, and
  \end{enumerate}
    \item The following conditions hold:
      \begin{enumerate}[label=({\roman*})]
    \item the matrix $S_uW_u$ is Hurwitz stable, and
    \item the system $(S_uW_u,e_1,e_\ell^T)$ is output controllable\footnote{i.e. $\rank\begin{bmatrix}
      e_\ell^Te_1 & e_\ell^TS_uW_ue_1 & \ldots & e_\ell^T(S_uW_u)^{d-1}e_1
    \end{bmatrix}=1$}.
      \end{enumerate}
    \item The closed-loop network $(\X{}\cup\Z{},\mathcal{R}_{\textnormal{AIC}})$  is ergodic and we have that \begin{equation}
        \E[X_\ell(t)]\to\mu/\theta\ \textnormal{as }t\to\infty
        \end{equation}
    \end{enumerate}
\end{proposition}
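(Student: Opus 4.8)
The plan is to establish the cycle $(a)\Leftrightarrow(b)$ by linear algebra and then close $(a)\Rightarrow(c)\Rightarrow(b)$ with a Foster--Lyapunov argument in one direction and a necessity argument on the moment equations in the other. Throughout I write $M:=S_uW_u$, which is Metzler since all first-order propensity rates are nonnegative. For $(a)(i)\Leftrightarrow(b)(i)$ I would simply invoke the equivalence of statements $(b)$ and $(c)$ of Theorem \ref{thm:uni}: for a Metzler matrix, the existence of $v>0$ with $v^TM<0$ is exactly Hurwitz stability. For $(a)(ii)\Leftrightarrow(b)(ii)$, assume $(i)$ so that $M$ is Hurwitz and hence invertible; then $w^TM+e_\ell^T=0$ has the unique solution $w^T=-e_\ell^TM^{-1}=e_\ell^T\int_0^\infty e^{Mt}\,dt$. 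Because $M$ is Metzler and Hurwitz, $e^{Mt}\ge0$ entrywise and $-M^{-1}\ge0$, so $w\ge0$ is automatic, and $w_\ell=\int_0^\infty(e^{Mt})_{\ell\ell}\,dt>0$ always holds since $(e^{Mt})_{\ell\ell}$ is positive near $t=0$. The one substantive requirement is $w_1=\int_0^\infty(e^{Mt})_{\ell 1}\,dt>0$, which holds iff there is a directed path from node $1$ to node $\ell$ in the graph of $M$. I would match this to the controllability matrix of $(b)(ii)$: it has a nonzero entry iff $(M^k)_{\ell 1}\ne0$ for some $k\le d-1$, and for a Metzler matrix, taking $k$ equal to the shortest-path length forces every contributing walk to use only nonnegative off-diagonal edges, so that entry is strictly positive exactly when such a path exists (for $\ell=1$ both conditions hold trivially).

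For $(a)\Rightarrow(c)$ the heart is a Foster--Lyapunov function for the augmented network. Writing the closed-loop first-order moment equations $\frac{d}{dt}\E[X]=M\E[X]+e_1k\E[Z_1]$, $\frac{d}{dt}\E[Z_1]=\mu-\eta\E[Z_1Z_2]$, and $\frac{d}{dt}\E[Z_2]=\theta\E[X_\ell]-\eta\E[Z_1Z_2]$ suggests testing a function whose linear-in-$x$ part uses $w$: choosing $V=\beta\theta\,w^Tx+\alpha z_1+\beta z_2+(\text{corrections})$ and using $w^TM=-e_\ell^T$ cancels the $x$-drift against the sensing reaction, leaving $\mathbb{A}V=k\beta\theta w_1 z_1+\alpha\mu-(\alpha+\beta)\eta z_1z_2$. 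The annihilation reaction supplies negative drift in the product $z_1z_2$, and the coefficient $k\beta\theta w_1$ is positive precisely because $w_1>0$, i.e. by output controllability. The main obstacle is that this positive linear term in $z_1$ is not dominated where $z_2$ is small, so a purely linear $V$ fails; I would augment $V$ with a strictly positive term $v^Tx$ ensuring radial unboundedness together with correction terms coupling $z_1$ to $z_2$ through the plant, following \cite{Briat:15e}, and exploit that output controllability forces $X_\ell$, hence the $Z_2$ production, to respond whenever $Z_1$ is large, driving the total drift negative outside a compact set. Irreducibility of the augmented state-space, required by Theorem \ref{prop:meyn}, then follows from Assumption \ref{hyp:1} together with the path $1\to\ell$ guaranteed by $(b)(ii)$, which allows both $Z_1$ and $Z_2$ to be produced and annihilated.

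For $(c)\Rightarrow(b)$ I would argue by necessity on the stationary moments. Ergodicity makes all first moments converge, so at stationarity the equations above give $\eta\E_\pi[Z_1Z_2]=\mu$ and $\theta\E_\pi[X_\ell]=\eta\E_\pi[Z_1Z_2]$, which both yields the set-point $\E_\pi[X_\ell]=\mu/\theta$ and forces the feedback loop to be closed: were $(M,e_1,e_\ell^T)$ not output controllable, no path from $X_1$ to $X_\ell$ would exist, $\E_\pi[X_\ell]$ would be pinned by the autonomous dynamics independently of $\mu,\theta$, and $\theta\E_\pi[X_\ell]=\mu$ would fail for generic rates, contradicting the existence of a stationary distribution (the integrator variable $Z_1-Z_2$ would drift to infinity); this gives $(b)(ii)$. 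Boundedness of the $X$-moments under the single bounded input $e_1k\E[Z_1]$, combined with the Metzler positivity structure, then forces $M$ to be Hurwitz, giving $(b)(i)$. I expect the Foster--Lyapunov construction of the second paragraph to be the genuinely hard step, with the remaining implications being either linear-algebraic or direct specializations of \cite{Briat:15e}.
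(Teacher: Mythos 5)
Your proposal is correct and follows the same overall architecture as the paper's proof, which is very terse: the paper simply cites \cite{Briat:15e} for the equivalence (a)$\Leftrightarrow$(b) and for (a)$\Rightarrow$(c), and disposes of (c)$\Rightarrow$(b) with exactly the necessity remark you give (instability of the moment equation would propagate to the closed loop; lack of output controllability would make the output insensitive to the input, so the set-point could not be tracked). Where you add genuine content beyond the paper is in (a)$\Leftrightarrow$(b): the explicit solution $w^T=-e_\ell^T(S_uW_u)^{-1}=e_\ell^T\int_0^\infty e^{S_uW_ut}\,dt$, the observation that nonnegativity of $w$ and positivity of $w_\ell$ are automatic for a Hurwitz Metzler matrix, and the graph-theoretic identification of $w_1>0$ with the nonvanishing of the output-controllability matrix via the shortest-path argument are all correct and make that equivalence self-contained where the paper defers to the reference. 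The one place your argument remains a sketch --- the actual Foster--Lyapunov function for the augmented closed-loop network in (a)$\Rightarrow$(c), where you correctly identify that the linear candidate leaves an undominated $k\beta\theta w_1z_1$ term and that extra coupling terms are needed --- is precisely the step the paper also does not reproduce and outsources to \cite{Briat:15e}, so you are no less complete than the source; just be aware that this is the genuinely hard part and your review would need to import that construction verbatim to be airtight.
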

\begin{proof}
  The proof of the equivalence between the statement (a) and (b) has been obtained in \cite{Briat:15e} as well as the proof of the implication (a) $\Rightarrow$ (c). To prove the implication (c) $\Rightarrow(b)$ it is enough to remark that the stability and the output controllability of the first -order moment equation are necessary conditions allowing for the integral control of a system. Indeed, if the moment equation is unstable, then so will be the first-order moments of the closed-loop network and if the system is not output controllable, then one cannot change the value of output by suitably acting on the input. This completes the proof.
\end{proof}

As in Theorem \ref{thm:uni}, the above result connects algebraic conditions to the stability and the output-controllability of a linear time-invariant system, which are standard concepts of systems and control theory. The algebraic conditions can be solved using standard linear programming techniques. However, it is interesting to note that the conditions are independent of the gain $k$ and the annihilation parameter $\eta$ of the controller. This is quite surprising as it is well-known that setting the gain of an integral controller too high in the deterministic setting often results into the destabilization of the closed-loop dynamics (unless the system to be controlled satisfies certain strong properties).

%
%

It is worth mentioning that the antithetic integral controller can be used to control more complex networks including bimolecular reactions or non-mass-action kinetics such as Michaelis-Menten kinetics \cite{Briat:15e}. However, the existing theoretical results do not yet cover those important classes of reaction networks. The difficulty arises from the potential loss of the (global) output-controllability condition. Simulation results tend to suggest that this controller should work properly even when the output-controllability property is not met globally, as is often the case for nonlinear systems.

\section{Phase-type distributed delays and their reaction network implementation}\label{sec:delays}

The aim of this section is to convince the reader that delays obeying phase-type distributions admit a natural reaction network representation and are relevant to consider. We first recall some theoretical basics on phase-type distributions and provide some examples in order to illustrate their richness in terms of behavior. In fact, those distributions are dense in the set of all probability distributions, which means that we can approximate arbitrarily closely any distribution, including heavy-tailed ones. We then provide a complete characterization of phase-type distributions in terms of simple algebraic conditions. Finally, under the existence assumption of a Markov process describing the considered phase-type distribution, we propose a simple procedure to construct the unique minimal unimolecular reaction network encoding this distribution. By unique and minimal, it is meant here that it is both the only network that exactly represents the considered distribution and the one that has the smallest number of reactions and molecular species.

\subsection{Preliminaries on phase-type distributions}

A phase-type distribution is a combination of mixtures and convolutions of exponential distributions. It is obtained by forming a system of interrelated Poisson processes (also known as phases) placed in series and parallel. It is represented by a random variable describing the time until absorption of a Markov process with one absorbing state starting from the initial condition $\alpha$ -- each state of the Markov process representing a phase of the overall process. The probability density function of the phase-type distribution $\PH(\alpha,H)$ is given by
\begin{equation}
  f(\tau)=\alpha e^{H\tau}H_0,\ \tau\ge0
\end{equation}
where $\alpha\in\mathbb{R}^{1\times m}_{\ge0}$, $||\alpha||_1=1$, $H\in\mathbb{R}^{m\times m}$ is a Hurwitz stable Metzler matrix and $H_0:=-H\mathds{1}_n$. The parameter $H$ is referred here to as the \emph{subgenerator matrix} and $\alpha$ as the \emph{input probability row vector}. Let $X\sim \PH(\alpha,H)$, then all the moments of this random variable are given by
\begin{equation}\label{eq:moment}
 \E[X^\ell]=(-1)^\ell\ell!\alpha H^{-\ell}\mathds{1}_m.
\end{equation}
%
The evolution of the probability distribution of the corresponding Markov process is described by the forward Kolmogorov equation
\begin{equation}
  \dot{p}(t)=p(t)\begin{bmatrix}
    0 & 0_{1\times m}\\
    H_0 & H
  \end{bmatrix}\ \textnormal{with}\ p(0)=\begin{bmatrix}
    0 & \alpha
  \end{bmatrix}.
\end{equation}

\noindent\textbf{Hypoexponential and Erlang distributions.} Hypoexponential distributions consist of the convolution of a finite number of exponential distributions with possibly different various rates. When all the rates are equal, the hypoexponential distribution reduces to the Erlang distribution. Examples of Erlang distributions are given in Figure \ref{fig:erlang} where we can observe that this distribution is quite rich as it can take various forms. In the case of a hypoexponential distribution with four phases and four parameters  $\lambda_1,\ldots,\lambda_4>0$, the matrix $H$ is given by
\begin{equation}
  H=\begin{bmatrix}
    -\lambda_1 & \lambda_1 & 0 & 0\\
    0 & -\lambda_2 & \lambda_2 & 0\\
    0 & 0 & -\lambda_3 & \lambda_3\\
    0 & 0 & 0 & -\lambda_4
  \end{bmatrix}.
\end{equation}
Typical hypoexponential distributions are depicted in Figure \ref{fig:hypo} for randomly chosen parameters $\lambda_1,\ldots,\lambda_4>0$.\\
\begin{figure}
  \centering
  \includegraphics[width=0.8\textwidth]{./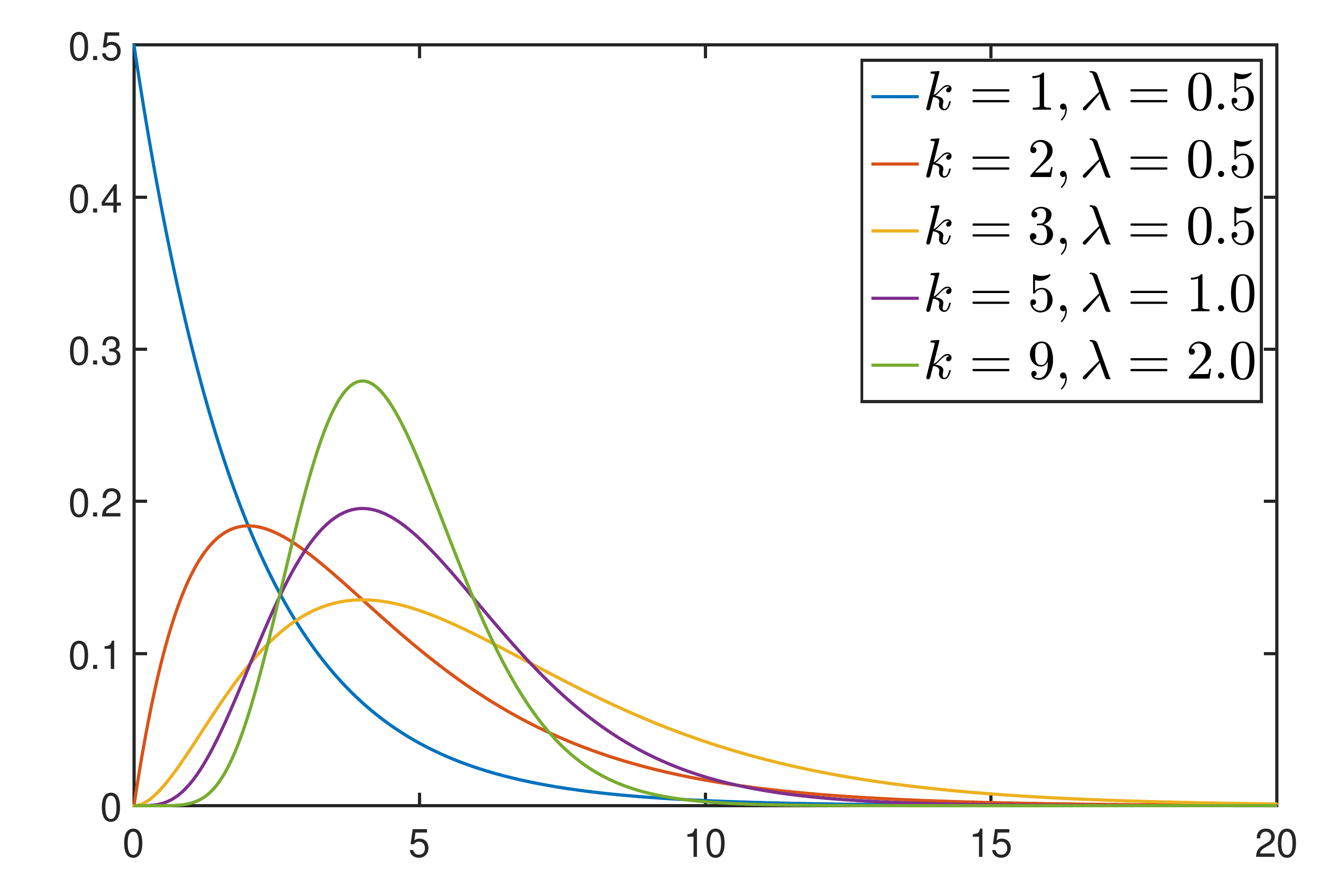}
  \caption{Examples of Erlang distributions}\label{fig:erlang}
\end{figure}
\begin{figure}
  \centering
  \includegraphics[width=0.8\textwidth]{./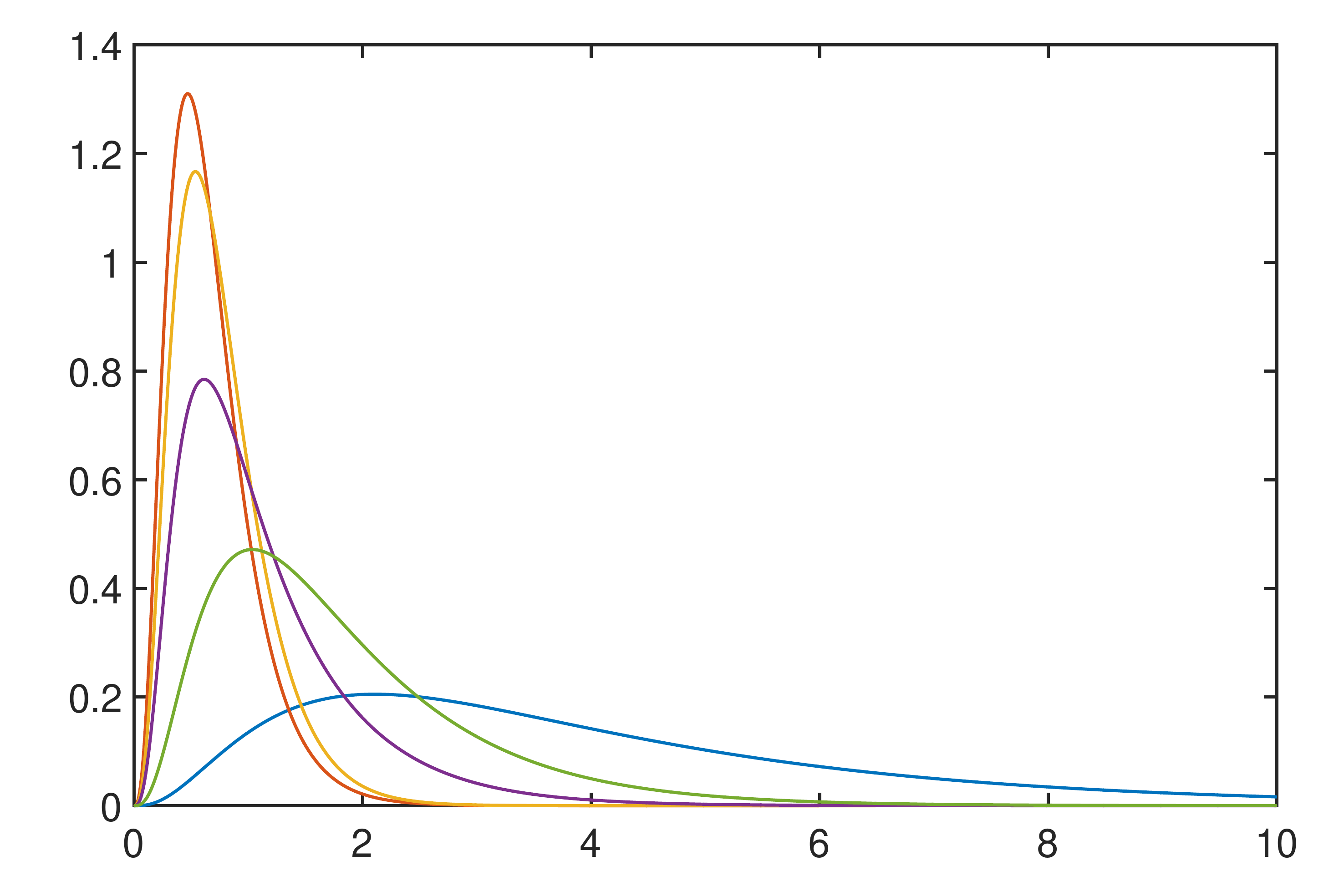}
  \caption{Examples of hypoexponential distributions}\label{fig:hypo}
\end{figure}

\noindent\textbf{Hyperexponential and Hyper-Erlang distributions.} Hyperexponential distributions consist of the mixture of exponential distributions, that is, the density function of a hyperexponential distribution is a convex combination of density functions of exponentially distributed random variables. Analogously, the Hyper-Erlang distribution has a density function consisting of a convex combination of density functions of Erlang random variables. For instance, let us consider two Erlang distributions. The parameter and the number of stages of the first one are 5 and 20, respectively. The second one has 5 and 80 as parameter and number of stages. The density function of the  considered Hyper-Erlang depicted in Figure \ref{fig:hypererlang} consists of the average of the density functions associated with the aforementioned Erlang distributions.

\begin{figure}
  \centering
  \includegraphics[width=0.8\textwidth]{./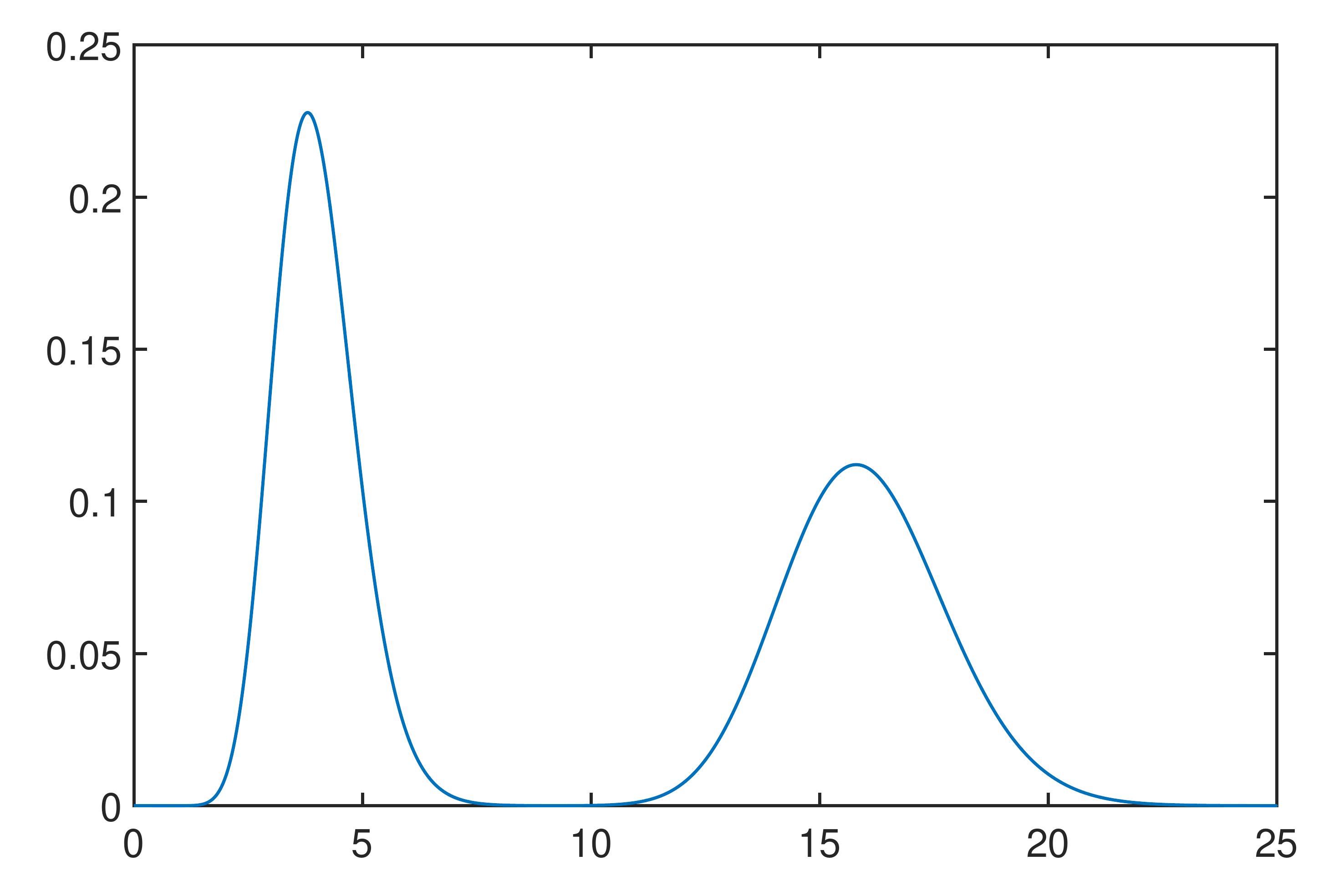}
  \caption{Example of a bimodal Hyper-Erlang distribution.}\label{fig:hypererlang}
\end{figure}

\subsection{Approximation of a constant delay}

In the deterministic setting, constant delays can be approximated arbitrarily closely by sequence of filters such as low-pass filters \cite{Marshall:79,Gawthrop:85} or all-pass filters such as lattice networks or Pad\'e approximants \cite{Tomlinson:63,Marshall:79,Lam:90}. It is known that the constant delay operator $\nabla_{\bar\tau}$ with delay $\bar \tau>0$ having $e^{-\bar\tau s}$ as transfer function can be approximated arbitrarily well by a series of $N$ low-pass filters with overall transfer function given by
\begin{equation}
  H_N(s):=\left(1+\dfrac{\bar\tau s}{N}\right)^{-N}, N\in\mathbb{Z}_{>0}.
\end{equation}
Note, moreover, that $e^{-\bar\tau s}$ is the Laplace transform of the Dirac distribution $\delta(t-\bar{\tau})$ centered around $\bar \tau$. We have the following standard result which is recalled for completeness:
\begin{proposition}[\cite{Marshall:79,Gawthrop:85}]
  We have that $H_N(s)\to e^{-s\bar\tau}$, for all $s\in\mathbb{C}$, as $N\to\infty$.
\end{proposition}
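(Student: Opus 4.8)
The plan is to reduce the claim to the elementary limit $(1+z/N)^N\to e^z$ for a fixed complex number $z$, applied with $z:=\bar\tau s$, and then to pass to reciprocals. So fix $s\in\mathbb{C}$ and set $z:=\bar\tau s$; the goal becomes to show that $H_N(s)=(1+z/N)^{-N}\to e^{-z}=e^{-\bar\tau s}$ as $N\to\infty$. Since $s$ is arbitrary, establishing this pointwise limit proves the proposition.

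First I would restrict attention to $N>|z|$, which excludes at most finitely many indices and is harmless for a limit as $N\to\infty$. For such $N$ the point $1+z/N$ lies within distance $|z|/N<1$ of $1$, hence has strictly positive real part and therefore avoids the branch cut of the principal complex logarithm $\mathrm{Log}$. This lets me take logarithms and write
\[
  \log H_N(s)=-N\,\mathrm{Log}\!\left(1+\frac{z}{N}\right).
\]
Next I would invoke the local expansion $\mathrm{Log}(1+w)=w+O(|w|^2)$, valid for $|w|\le 1/2$, with $w=z/N\to0$, to obtain
\[
  -N\,\mathrm{Log}\!\left(1+\frac{z}{N}\right)
  =-N\left(\frac{z}{N}+O\!\left(\frac{|z|^2}{N^2}\right)\right)
  =-z+O\!\left(\frac{|z|^2}{N}\right).
\]
As $N\to\infty$ the error term vanishes, so $\log H_N(s)\to -z$, and by continuity of the complex exponential $H_N(s)=\exp\big(\log H_N(s)\big)\to \exp(-z)=e^{-\bar\tau s}$, as desired.

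Since the statement is a classical fact, there is no genuine obstacle here; the only point requiring a modicum of care is the choice of branch of the logarithm, which is why I first confine myself to $N>|z|$ so that $1+z/N$ stays in the right half-plane and $\mathrm{Log}$ is both well defined and continuous there. As an alternative route that sidesteps the logarithm entirely, one may expand $(1+z/N)^N=\sum_{k=0}^{N}\binom{N}{k}(z/N)^k$ and note that the coefficient $\binom{N}{k}N^{-k}=\prod_{j=0}^{k-1}(1-j/N)$ tends to $1/k!$ while each term is dominated by $|z|^k/k!$; dominated convergence for series then gives $(1+z/N)^N\to\sum_{k\ge0}z^k/k!=e^z$, and passing to reciprocals yields the claim. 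Either way the convergence is pointwise in $s$, which is exactly what the proposition asserts.
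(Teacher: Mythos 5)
Your proof is correct and follows essentially the same route as the paper's: write $H_N(s)=\exp(-N\,\mathrm{Log}(1+\bar\tau s/N))$ using the principal branch and invoke the first-order expansion of the logarithm. You in fact tighten the paper's argument by restricting to $N>|z|$ so the branch choice is legitimate and by quantifying the $O(|z|^2/N)$ error, which is a welcome improvement over the paper's informal ``$\simeq$'' step.
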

\begin{proof}
  Clearly, $H_N(s)$ can be rewritten as $H_N(s)=\exp(\log(H_N(s)))$. Hence,
\begin{equation}
  \begin{array}{rcl}
    H_N(s)    &=& \exp(\log(H_N(s)))\\
                    &=&  \exp(-N\log(1+\bar{\tau} s/N))\\
                    &\simeq& \exp(-\bar{\tau} s)
  \end{array}
\end{equation}
where we have used the principal branch of the logarithm and the fact that $\log(1+\bar{\tau}s/N)\approx \bar{\tau}s/N$ for a sufficient large $N$. As a result, we have that $H_N(s)\to e^{-s\bar{\tau}}$, for all $s\in\mathbb{C}$, as $N\to\infty$. This completes the proof.
\end{proof}

Interestingly, a similar result exists in the stochastic setting. Indeed, a constant delay $\bar\tau$ can be expressed as the limit (in distribution) of an Erlang random variable. This is stated in the following result:
\begin{proposition}\label{prop:detdel}
Let us then consider a random variable $\tau_N$ following an Erlang distribution with shape $N$ (i.e. number of phases) and rate $N/\bar{\tau}$. The corresponding density function is given by
\begin{equation}
  f_N(\xi)=\dfrac{N^N\xi^{N-1}e^{-N\xi/\bar{\tau}}}{\bar{\tau}^N(N-1)!}.
\end{equation}
  Then, we have that $\tau_N\to\bar{\tau}$ in distribution as $N\to\infty$.
\end{proposition}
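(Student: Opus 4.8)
The plan is to reduce the statement to the transform computation already carried out in the preceding proposition. The key observation is that an Erlang random variable of shape $N$ is a sum of $N$ independent exponential random variables, so that its Laplace transform is precisely the function $H_N$ whose limit has just been established.

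First I would write $\tau_N=\sum_{i=1}^N E_i$, where $E_1,\ldots,E_N$ are independent and identically distributed, each exponentially distributed with rate $N/\bar{\tau}$; this is exactly what it means for $\tau_N$ to be Erlang with shape $N$ and rate $N/\bar{\tau}$, and one verifies it against the stated density $f_N$ through the convolution formula for sums of independent variables. Since the Laplace transform of an exponential variable of rate $\lambda$ is $s\mapsto(1+s/\lambda)^{-1}$, independence gives, for $s\ge0$,
\begin{equation}
  \E\!\left[e^{-s\tau_N}\right]=\left(1+\dfrac{s\bar{\tau}}{N}\right)^{-N}=H_N(s).
\end{equation}
By the preceding proposition this converges, as $N\to\infty$, to $e^{-s\bar{\tau}}$, which is exactly the Laplace transform of the Dirac distribution $\delta(t-\bar{\tau})$, i.e. of the degenerate law concentrated at $\bar{\tau}$.

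To finish I would invoke the continuity theorem for Laplace transforms of probability measures on $[0,\infty)$: since every $\tau_N$ is nonnegative and the pointwise limit $e^{-s\bar{\tau}}$ is the transform of a genuine probability distribution, the convergence of transforms forces $\tau_N\to\bar{\tau}$ in distribution. Equivalently, one may pass to characteristic functions, note that $\E[e^{it\tau_N}]=(1-it\bar{\tau}/N)^{-N}\to e^{it\bar{\tau}}$, and apply L\'evy's continuity theorem, the limit being continuous at $t=0$.

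There is essentially no obstacle here, since the analytic content sits in the preceding proposition; the only points needing care are checking the Erlang-parameter bookkeeping and confirming that the hypotheses of the continuity theorem are met, both routine. As a fully self-contained cross-check that bypasses transforms entirely, one can note that $\E[\tau_N]=N\cdot(\bar{\tau}/N)=\bar{\tau}$ and $\mathrm{Var}[\tau_N]=N\cdot(\bar{\tau}/N)^2=\bar{\tau}^2/N\to0$, so that Chebyshev's inequality yields $\tau_N\to\bar{\tau}$ in probability, and convergence in probability to a constant is equivalent to convergence in distribution to that constant.
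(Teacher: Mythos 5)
Your proposal is correct, but your primary route differs from the paper's. The paper explicitly declines to work with transforms or cumulative distribution functions and instead computes, from the moment formula for phase-type distributions, $\E[\tau_N]=\bar{\tau}$ and $V(\tau_N)=\bar{\tau}^2/N\to0$, then appeals to the unimodality of the Erlang family to argue that the density $f_N$ converges to the shifted Dirac delta $\delta(\xi-\bar{\tau})$, hence that the cumulative distribution converges to the shifted Heaviside function. Your main argument instead observes that the Laplace transform of $\tau_N$ is exactly the low-pass-filter transfer function $H_N(s)=(1+s\bar{\tau}/N)^{-N}$ of the preceding proposition, so that the already-established limit $H_N(s)\to e^{-s\bar{\tau}}$ plus the continuity theorem gives the result; this is an attractive structural link between the deterministic and stochastic approximation results that the paper states informally but never exploits in the proof, and it is arguably more rigorous than the paper's ``unimodal density tends to a Dirac'' step, which is heuristic as written. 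Your closing Chebyshev argument is essentially the paper's own approach made precise: same mean and variance computation, but replacing the unimodality appeal with the standard fact that convergence in probability to a constant implies convergence in distribution. Either of your two routes is a complete and sound proof.
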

\begin{proof}
Instead of proving that the cumulative distribution $F_N(\xi)$ of $\tau_N$ converges to the shifted Heaviside function $H(\xi-\bar{\tau})$ at all continuity points of $H(\xi-\bar{\tau})$ as $N\to\infty$, we propose a simpler alternative approach based on the mean and variance of the random variable $\tau_N$. From the moments expressions in \eqref{eq:moment}, it is immediate to see that the mean of $\tau_N$ is given by
\begin{equation}
  \E[\tau_N]=\bar{\tau}
\end{equation}
  and its variance by
  \begin{equation}
  V(\tau_N)=\bar{\tau}^2/N.
\end{equation}
Hence, we have that $\E[\tau_N]\to\bar \tau$ and $V(\tau_N)\to0$ as $N\to\infty$. Using the fact that the Erlang distributions are unimodal, then this implies that the density function $f_N(\xi)$ converges to the shifted Dirac delta $\delta(\xi-\bar\tau)$, and hence that $F_N(\xi)$ of $\tau_N$ converge to the shifted Heaviside function $H(\xi-\bar{\tau})$ as $N\to\infty$. This proves the result.
\end{proof}

The above result shows that, in the limit, the Erlang distribution tends to a Dirac with mass localized at $\bar{\tau}$. Figure \ref{fig:Dirac} depicts different Erlang distribution where we can see that as $N$ increases the shape of the Erlang distribution gets closer to the Dirac distribution.
\begin{figure}
  \centering
  \includegraphics[width=0.8\textwidth]{./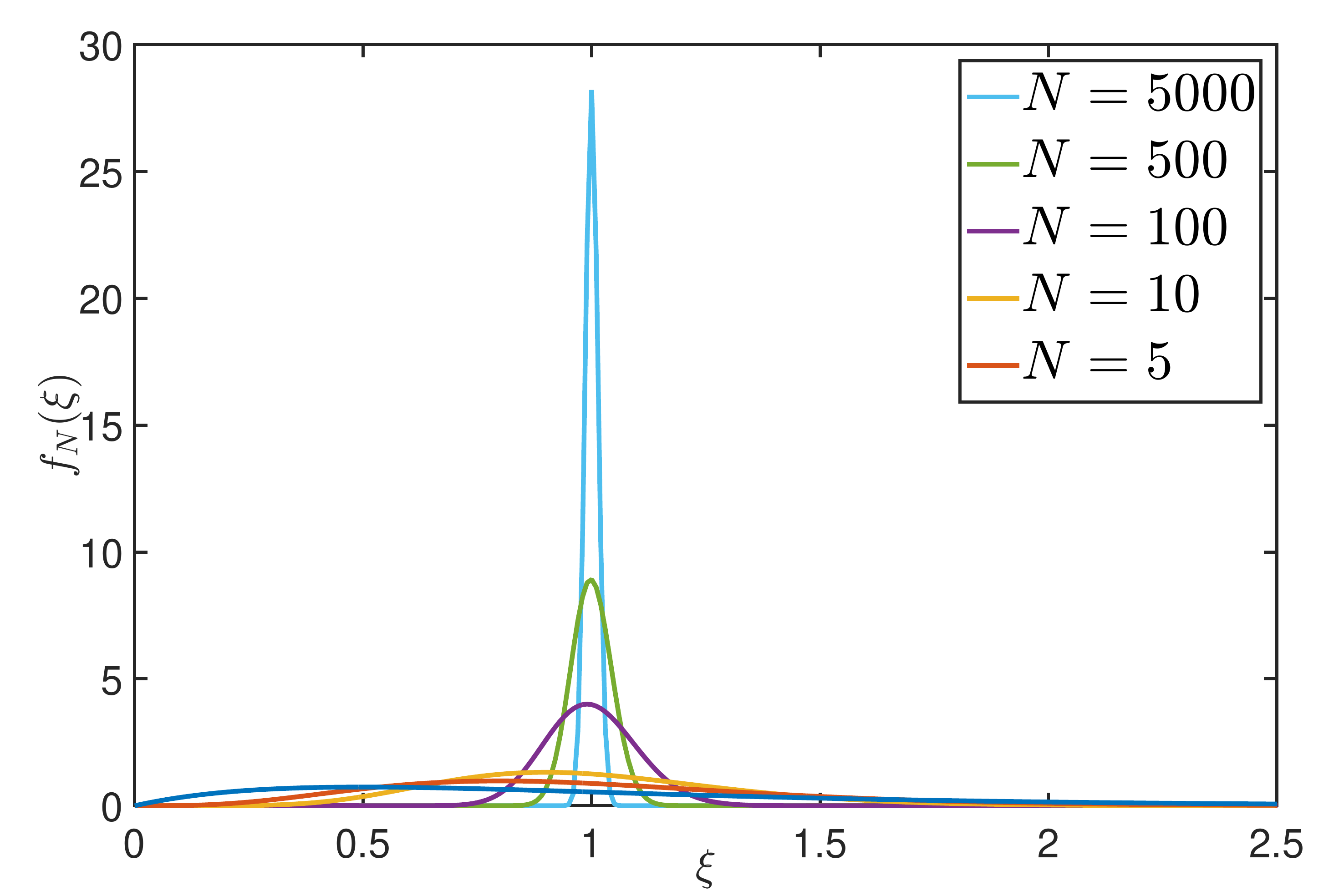}
  \caption{Erlang distributions with rate $N/\bar\tau$ and shape $N$ approximating the Dirac distribution.}\label{fig:Dirac}
\end{figure}

\subsection{Reaction network implementation of phase-type delays}\label{sec:implementation}

We now address two problems. The first one is, starting from a Hurwitz stable Metzler matrix $H\in\mathbb{R}^{m\times m}$,
can this matrix be used to represent a phase-type distribution? And, if so, how can we construct a reaction network that represents this distribution? Additional properties of the reaction network are also studied.\\


\subsubsection{Existence of a Markov process} The following result fully characterizes whether a given matrix $H\in\mathbb{R}^{m\times m}$ can be used to model a delay which phase-type distribution:
\begin{proposition}\label{prop:existence}
  Let us consider a matrix $H\in\mathbb{R}^{m\times m}$.
  \begin{enumerate}[label=({\alph*})]
    \item The matrix $H\in\mathbb{R}^{m\times m}$ is a subgenerator matrix.
    \item The following conditions hold:
     \begin{enumerate}[label=({\roman*})]
     \item it is Metzler and Hurwitz stable,
    \item $H\mathds{1}_m\le0$, and
    \item at least one entry of $H\mathds{1}_n$ is nonzero; i.e. $H\mathds{1}_m\ne0$.
  \end{enumerate}
  \end{enumerate}

\end{proposition}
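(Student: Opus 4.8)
The plan is to prove the equivalence (a) $\Leftrightarrow$ (b) by unpacking what it means for $H$ to be a subgenerator matrix in terms of the underlying continuous-time Markov chain, and then translating each structural property of that chain into an algebraic property of $H$ (and conversely). Concretely, $H$ being a subgenerator means that the block matrix
\[
Q=\begin{bmatrix} 0 & 0_{1\times m}\\ H_0 & H\end{bmatrix},\qquad H_0:=-H\mathds{1}_m,
\]
is the infinitesimal generator of a Markov process on $\{0,1,\ldots,m\}$ in which state $0$ is absorbing, the states $1,\ldots,m$ are transient, and the absorbing state is reachable with positive probability, so that the time-to-absorption is a genuine, finite-valued, non-degenerate random variable. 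The whole proof then amounts to reading the conditions (i)--(iii) off of $Q$ and, in the other direction, reconstructing $Q$ from (i)--(iii).

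For the implication (a) $\Rightarrow$ (b), I would extract the three conditions directly from the sign pattern and the asymptotics of $Q$. Since $Q$ is a generator, its off-diagonal entries are nonnegative; restricting to the transient block yields $H_{ij}\ge0$ for $i\ne j$, i.e. $H$ is Metzler, while nonnegativity of the first column $H_0=-H\mathds{1}_m\ge0$ is exactly $H\mathds{1}_m\le0$, which is (ii). For the Hurwitz part of (i) I would use that $e^{Ht}$ is the sub-stochastic semigroup collecting the probabilities of occupying a given transient state at time $t$ without having been absorbed; since all of $1,\ldots,m$ are transient, absorption is certain, so the row sums of $e^{Ht}$ (the survival probabilities) tend to $0$, forcing $e^{Ht}\to0$ entrywise and hence every eigenvalue of $H$ to have negative real part. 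Finally (iii) follows because Hurwitz stability makes $H$ nonsingular, so $H\mathds{1}_m=0$ would give $\mathds{1}_m=0$; equivalently, a nonzero exit vector $H_0$ is precisely what renders the absorbing state reachable.

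For the converse (b) $\Rightarrow$ (a), I would build the candidate generator $Q$ above from the given $H$ and verify the Markov-chain axioms. Setting $H_0:=-H\mathds{1}_m$, condition (ii) gives $H_0\ge0$ and the Metzler part of (i) gives $H_{ij}\ge0$ for $i\ne j$, so every off-diagonal entry of $Q$ is nonnegative; the row sums vanish identically because $H_0+H\mathds{1}_m=0$ by construction, so $Q$ is a bona fide generator with $0$ absorbing. Hurwitz stability of $H$ then gives $e^{Ht}\to0$, so the chain leaves the transient block almost surely and states $1,\ldots,m$ are transient; equivalently $-H^{-1}=\int_0^\infty e^{Ht}\,dt\ge0$ is the finite matrix of expected sojourn times. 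Condition (iii) ensures $H_0\ne0$, so the absorbing state is genuinely reachable and the absorption time is non-degenerate, which is exactly what is required for $H$ to qualify as a subgenerator.

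The technical heart of the argument, and the step I expect to require the most care, is the equivalence between the probabilistic statement \emph{``every transient state leads to absorption''} and the spectral statement \emph{``$H$ is Hurwitz stable.''} This is where one must invoke the standard theory of absorbing Markov chains and M-matrices: for a Metzler matrix $H$ with $H\mathds{1}_m\le0$, nonsingularity, Hurwitz stability, convergence of $\int_0^\infty e^{Ht}\,dt$ to the nonnegative fundamental matrix $-H^{-1}$, and certainty of absorption are all equivalent. I would also note that condition (iii) is in fact redundant given (i), since Hurwitz stability already rules out $H\mathds{1}_m=0$; it is retained only to make explicit that the absorbing state must be reachable, i.e. that the represented phase-type distribution is non-degenerate.
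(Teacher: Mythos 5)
Your proof is correct and follows essentially the same route as the paper: both arguments hinge on identifying $H$ as a subgenerator exactly when the bordered matrix $Q=\begin{bmatrix}0 & 0_{1\times m}\\ -H\mathds{1}_m & H\end{bmatrix}$ is a generator with state $0$ absorbing, and then reading the sign conditions (Metzler, $H\mathds{1}_m\le 0$, $H\mathds{1}_m\ne 0$) off that block structure. Where you go beyond the paper is in the Hurwitz part of condition (i): the paper's proof never actually argues why Hurwitz stability is equivalent to anything on the Markov-chain side, whereas you supply the standard link (certain absorption $\Leftrightarrow$ row sums of the substochastic semigroup $e^{Ht}$ vanish $\Leftrightarrow$ $e^{Ht}\to 0$ $\Leftrightarrow$ $H$ Hurwitz, via the M-matrix/fundamental-matrix equivalences). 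That is precisely the step that needs care, and your treatment closes a gap the paper leaves implicit. Your side remark that condition (iii) is redundant given (i) --- since a Hurwitz stable matrix is nonsingular, $H\mathds{1}_m=0$ is impossible --- is also correct; the paper retains (iii) only to make the reachability of the absorbing state explicit.
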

\begin{proof}
  The matrix $H$ is a subgenerator matrix if and only if the matrix
  \begin{equation}
    \bar{H}:=\begin{bmatrix}
      0 & 0\\
      -H\mathds{1} & H
    \end{bmatrix}
  \end{equation}
  is a $Q$-matrix (i.e. the matrix $\bar H$ is Metzler and $\bar{H}\mathds{1}_m=0$) and that the first state is absorbing. Clearly, a necessary and sufficient condition for $\bar H$ to be Metzler is that $H$ be Metzler such that $H\mathds{1}\le0$. The last condition is obtained because the first state needs to be absorbing. This is equivalent to saying that $H\mathds{1}_n\ne0$. This proves the result.
\end{proof}

\subsubsection{Construction of the reaction network} We now address the problem of constructing a stochastic reaction network implementing a given phase-type distributed delay. Since the matrix $H$ is of dimension $m$, the network needs to have at least $m$ phases and, hence, at least $m$ molecular species. In fact, we will prove that we need exactly $m$ molecular species. We denote those species by $\D{1},\ldots,\D{m}$. In addition, the reaction network must satisfy the following properties:
\begin{itemize}
  \item the delay line must be conservative in the sense that nothing is lost or created inside the queue;
  \item the state-space of the reaction network describing the delay line needs to be irreducible.
\end{itemize}

The first constraint is easily met by exclusively considering conversion reactions. Catalytic or degradation reactions cannot be used as they do not preserve mass. Multimolecular reactions cannot be used since their propensity is nonlinear, while we need linear propensities to appropriately represent the matrix $H$. The second constraint may seem contradictory with the fact that the Markov process describing the delay distribution has an absorbing state. Recall that the delay is the time-to-absorption of this Markov process. However, what we are requiring here is that the state-space $\mathbb{Z}^m_{\ge0}$ of the stochastic reaction network be irreducible. The motivation behind this constraint is that we would like to obtain conditions for the ergodicity of delayed stochastic reaction networks. In this regard, adding delay lines should not cause of a loss of irreducibility.

The following result describes the construction of the unique minimal stochastic reaction network implementing a given phase-type distributed delay:
\begin{proposition}
  Let the matrix $H=[h_{ij}]_{i,j=1,\ldots,m}$ be a subgenerator matrix and $\alpha\in\mathbb{R}^{1\times n}$ be an input probability row vector. Then, the delay $\tau\sim \PH(\alpha,H)$ is exactly represented by the stochastic reaction network
  \begin{equation}
    \begin{array}{lclclclclcl}
      \phib&\rarrow{\alpha e_i}&\D{i},&& \D{i}&\rarrow{h_{ij}}&\D{j}, &&\D{i}&\rarrow{-e_i^TH\mathds{1}}&\phib,
    \end{array}
  \end{equation}
  where $i,j=1,\ldots,m$, $i\ne j$. Moreover, this network is minimal (minimal number of species and reactions) and its state-space is irreducible.
\end{proposition}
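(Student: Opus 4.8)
The plan is to verify, in three parts, the three claims in the proposition: (i) the network correctly encodes the distribution $\PH(\alpha,H)$; (ii) its state-space $\mathbb{Z}^m_{\ge0}$ is irreducible; and (iii) it is minimal in both species and reactions. First I would set up the correspondence between the reaction network and the Markov process underlying the phase-type distribution. Label the species counts by $D=(D_1,\ldots,D_m)$. The conversion reactions $\D{i}\rarrow{h_{ij}}\D{j}$ (for $i\ne j$) have mass-action propensity $h_{ij}D_i$ and stoichiometry $e_j-e_i$; the exit reactions $\D{i}\rarrow{-e_i^TH\mathds{1}}\phib$ have propensity $(-e_i^TH\mathds{1})D_i=(H_0)_iD_i$ and stoichiometry $-e_i$; the input reactions $\phib\rarrow{\alpha e_i}\D{i}$ inject mass with rate $\alpha_i$. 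I would then show that a single token entering the line at time zero with initial distribution $\alpha$ evolves exactly as the phase process: the generator restricted to the single-token sector is precisely $H$ on the transient phases, with absorption rate $H_0=-H\mathds{1}_m$. Concretely, writing the Kolmogorov equation for the probability $q(t)\in\mathbb{R}^{1\times m}$ that the token occupies each phase gives $\dot q(t)=q(t)H$ with $q(0)=\alpha$, whence the exit rate (the flux into $\phib$) at time $\tau$ is $q(\tau)H_0=\alpha e^{H\tau}H_0=f(\tau)$, matching the stated density. This identifies the sojourn time of a token with $\tau\sim\PH(\alpha,H)$ and proves exact representation.

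For irreducibility, I would verify that $\mathbb{Z}^m_{\ge0}$ is reachable and strongly connected under the reaction dynamics. The key ingredients are the input reactions $\phib\rarrow{\alpha e_i}\D{i}$, which allow adding tokens to any phase $i$ with $\alpha_i>0$, and the exit reactions, which allow removing tokens. From any state $x\in\mathbb{Z}^m_{\ge0}$ one can reach $0$ by firing exit reactions (possibly after routing tokens through conversions into phases that have a positive exit rate, which is guaranteed since $H\mathds{1}_m\ne0$ and $H$ is an irreducible-enough subgenerator), and from $0$ one can reach any target state by injecting and routing tokens. Here I must be careful about a subtlety: the off-diagonal pattern of $H$ encodes which phase-to-phase conversions actually occur (those with $h_{ij}>0$), and the reachability argument depends on the directed graph of $H$ together with the support of $\alpha$ and of $H_0$ being such that every phase can be both entered and exited. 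I would state precisely that irreducibility of $\mathbb{Z}^m_{\ge0}$ follows from $H$ being a subgenerator (so $H\mathds{1}_m\le0$, $\ne0$) together with the injection reactions; the explicit path construction (inject into an $\alpha$-supported phase, convert along edges of $H$, exit) should be the technical core.

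For minimality I would argue in two stages. The lower bound on species is immediate: since $H$ is $m\times m$ and the Markov process has $m$ transient phases, at least $m$ species are required to track the phase occupancy, so the $m$ species $\D{1},\ldots,\D{m}$ are necessary, and the construction uses exactly $m$. For the reactions, I would argue that the conversion reactions are in bijection with the nonzero off-diagonal entries $h_{ij}$ of $H$, each of which must be represented to realize the subgenerator dynamics; the exit reactions correspond exactly to the nonzero entries of $H_0$; and the input reactions correspond to the support of $\alpha$. Any faithful representation must include a reaction for each such nonzero structural entry, so the constructed network achieves the minimum. I would also invoke the constraints derived earlier in Section \ref{sec:implementation}—conservativeness (forcing conversion reactions only, ruling out catalytic/degradation alternatives) and linearity of propensities (ruling out multimolecular reactions)—to argue that the representation is essentially forced, giving \emph{uniqueness} of the minimal network. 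The main obstacle I anticipate is the uniqueness claim: I expect establishing it rigorously to require a careful argument that no alternative choice of species, reaction structure, or propensity functions can reproduce the same density $\alpha e^{H\tau}H_0$ while respecting conservativeness and linearity, which amounts to showing that the map from admissible networks to their representing pair $(\alpha,H)$ is injective on the minimal-dimensional networks.
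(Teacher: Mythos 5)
Your proposal is correct and follows essentially the same route as the paper's proof: minimality of species from the one-species-per-phase correspondence, minimality of reactions from the bijection with the nonzero off-diagonal entries of $H$ (and of $H_0$ and $\alpha$), and irreducibility via explicit injection/conversion/exit paths. You actually go further than the paper in two places --- the single-token Kolmogorov argument verifying that the exit-time density is $\alpha e^{H\tau}H_0$, and the explicit flagging of the reachability subtlety (every phase must be enterable from the support of $\alpha$) and of the nontrivial injectivity needed for uniqueness --- both of which the paper's proof asserts without detail.
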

\begin{proof}
The queue (consisting of the reactions involving delay species as both reactants and products) is conservative since it only consists of conversion reactions. From the structure of the matrix $H$, the irreducibility is also immediate since we can see that any state can be reached from any other state through a sequence of reactions having a positive propensity. The minimality of the species comes from the fact that each species correspond to one phase. Since, we have $m$ phases, we need at least $m$ species. Finally, the minimality of the reactions comes from the fact that we have exactly one reaction for every nonzero off-diagonal entry of the matrix $H$. In this regard, every reaction is needed and one cannot choose a smaller number. This proves the minimality of the  number of reactions and, as a consequence, this network is unique.
\end{proof}

The above result shows that any phase-type distributed delay can be represented as a simple unimolecular chemical reaction network. Each produced molecule by the birth reactions gets destroyed by the death reactions after a delay $\tau\sim\textnormal{PH}(H,\alpha)$. This is very convenient  since it is  known from \cite{Briat:13i,Briat:16cdc,Briat:17ifacBio} that unimolecular stochastic reaction networks are usually well-behaved and easy to analyze using standard linear algebra tools.

\begin{example}
  Let us consider a delay that follows the Erlang distribution with rate $\lambda>0$ and shape $m$. In this case, we have that
\begin{equation}
  H=\begin{bmatrix}
    -\lambda & \lambda & 0 & \ldots & 0\\
    0 & -\lambda & \lambda & \ldots & 0\\
    \vdots &\ddots & \ddots & \ddots & \vdots\\
    \vdots & &\ddots& \ddots&\vdots\\
    0 & 0 & \ldots & 0 &-\lambda
  \end{bmatrix},H_0=\begin{bmatrix}
    0\\
    0\\
    \vdots\\
    \vdots\\
    \lambda
  \end{bmatrix},\alpha=\begin{bmatrix}
    1\\
    0\\
    \vdots\\
    \vdots\\
    0
  \end{bmatrix}^T.
\end{equation}
Elementary matrix calculations show that the probability density function of the time to absorption $\alpha e^{H\xi}H_0$ is equal to
\begin{equation}
  \dfrac{\lambda^N\xi^{N-1}e^{-\lambda\xi}}{(N-1)!},
\end{equation}
which confirms that $\textnormal{PH}(H,\alpha)$ generates an Erlang distribution with shape $N$ and rate $\lambda$. The corresponding reaction network is given by
\begin{equation}
    \begin{array}{lclclclclcl}
      \phib&\rarrow{1}&\D{1},&& \D{i}&\rarrow{\lambda}&\D{i+1},&& \D{m}&\rarrow{\lambda}&\phib,i=1,\ldots,m-1.
    \end{array}
\end{equation}
\end{example}

\section{Stochastic reaction networks with phase-type distributed delays}\label{sec:delayednetworks}

We know now that for any phase-type distributed delay corresponds a minimal unimolecular stochastic reaction network. We are then in position to define delayed reactions in reaction networks. Delayed reactions take the form
\begin{equation}\label{eq:delayedReaction}
 f_1(\X{})+f_2(\X{})\rarrow{k,\tau(H,\alpha)} f_1(\X{})+g(\X{})
\end{equation}
where $f_1(\X{})$ and $f_2(\X{})$ denote the preserved part of the reactants and the consumed part of the reactants, respectively. The term $g(\X{})$ represents the produced species/reactants. The first parameter of the reaction is the \emph{reaction rate}, assumed to be nonnegative, whereas the second one is the \emph{delay} where $\tau(H,\alpha)$ is a shorthand for $\tau\sim\PH(H,\alpha)$. We can then apply the construction procedure of Section \ref{sec:implementation} to get the delay line. However, we now interconnect the substitute the reactants of the birth reactions in the delay line by the reactant of the delayed reaction \eqref{eq:delayedReaction} and the products of the death reactions of the delay line by the products of the reaction \eqref{eq:delayedReaction}. However, attention should be paid as the reaction \eqref{eq:delayedReaction} can be interpreted in two different ways depending on its meaning. This is formalized below:
\begin{proposition}
  The delayed reaction \eqref{eq:delayedReaction} with the $m$-phase delay $\tau\sim\PH(H,\alpha)$ admits one of the following realizations:
 \begin{enumerate}[label=({\alph*})]
   \item Non-absorbing realization:
   \begin{equation}
   \begin{array}{rcl}
     f_1(\X{})+f_2(\X{})&\rarrow{k\alpha e_i}&f_1(\X{})+\D{i},\\
     \D{i}&\rarrow{h_{ij}}&\D{j},\\
     \D{i}&\rarrow{-e_i^TH\mathds{1}_m}&g(\X{})
   \end{array}
    \end{equation}
    for all $i=1,\ldots,m$, $i\ne j$.
    \item Absorbing realization:
    \begin{equation}
   \begin{array}{rcl}
   f_1(\X{})+f_2(\X{})&\rarrow{k\alpha e_i}&\D{i},\\
   \D{i}&\rarrow{h_{ij}}&\D{j},\\
\D{i}&\rarrow{-e_i^TH\mathds{1}_m}&f_1(\X{})+g(\X{})
   \end{array}
    \end{equation}
for all $i,j=1,\ldots,m$, $i\ne j$.
 \end{enumerate}
\end{proposition}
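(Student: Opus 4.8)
The plan is to construct each realization explicitly by interconnecting the minimal delay-line network of the previous proposition with the delayed reaction \eqref{eq:delayedReaction}, and then to verify that the resulting network reproduces both the correct actuation propensity and the correct phase-type delay between the consumption of the reactants and the production of $g(\X{})$. First I would recall that the $m$-phase delay $\tau\sim\PH(H,\alpha)$ is encoded by the birth reactions $\phib\rarrow{\alpha e_i}\D{i}$, the conversion reactions $\D{i}\rarrow{h_{ij}}\D{j}$ and the death reactions $\D{i}\rarrow{-e_i^TH\mathds{1}_m}\phib$, and that the time-to-absorption of a single token injected according to $\alpha$ is exactly $\PH(H,\alpha)$-distributed. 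The interconnection then consists of replacing the empty reactant of each birth reaction by the reactants of \eqref{eq:delayedReaction} and the empty product of each death reaction by its products, so that a firing of the delayed reaction injects one token into phase $i$ with propensity proportional to $k\,\alpha e_i$ times the mass-action propensity of the reactants, thereby faithfully encoding both the reaction rate $k$ and the input probability vector $\alpha$.

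Second, I would distinguish the two realizations according to the fate of the preserved part $f_1(\X{})$. In the \emph{non-absorbing} realization, $f_1(\X{})$ acts catalytically: it appears on both sides of the birth reaction and therefore remains free in the system throughout the entire delay, while the death reaction produces only $g(\X{})$. In the \emph{absorbing} realization, $f_1(\X{})$ is instead sequestered into the delay line at the moment of firing, since it is consumed by the birth reaction, and it is regenerated together with $g(\X{})$ upon absorption, since it reappears in the death reaction. In both cases the consumed part $f_2(\X{})$ is destroyed at firing and never restored, and the subgenerator $H$ together with $\alpha$ is represented by the same conversion and death rates, so that each realization reproduces exactly the intended stoichiometric effect while differing only in whether the preserved reactants are locked up during the delay.

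Third, I would argue that the delay experienced by each individual firing is an independent draw from $\PH(H,\alpha)$. The key point is that, because the delay species enter only through conversion and death reactions with mass-action, hence linear, propensities, the aggregate continuous-time Markov chain on the delay-species counts decomposes, by the memorylessness of the exponential holding times, into a superposition of independent single-token phase-type processes. Consequently, the time between the injection of a token and its removal via a death reaction is, for each token separately, distributed as the time-to-absorption of the Markov process with subgenerator $H$ and initial distribution $\alpha$, that is, as $\PH(H,\alpha)$. I expect this superposition-and-independence argument to be the main obstacle, since it is where one must justify that the collective linear dynamics on the phases genuinely encode per-firing independent delays rather than some coupled aggregate behaviour; once this is established, the remainder is a routine check that the propensities and stoichiometries of realizations (a) and (b) match the intended interpretation of \eqref{eq:delayedReaction}.
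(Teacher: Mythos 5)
Your proposal is correct and rests on the same core observation as the paper's proof, namely that the two realizations are distinguished solely by whether the preserved part $f_1(\X{})$ is kept outside the queue (catalytic birth reaction) or sequestered into it and restituted upon absorption. The paper's own proof is essentially that one-sentence dichotomy; your additional material (the interconnection with the minimal delay-line network and the per-token independence argument via the linearity of the conversion/death propensities) is sound and supplies justification that the paper leaves implicit in its earlier construction of the delay-line network, rather than constituting a different route.
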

\begin{proof}
  It is immediate to see that we have two possible implementations for this delayed reaction. Either the preserved part of the reaction is non-absorbed in the queue or it is absorbed and restituted at the end of the queue.
\end{proof}

We justify the above result with the following examples:
\begin{example}
   Let us consider for instance the catalytic reaction
\begin{equation}
\X{1}+\X{2}\rarrow{k,\tau(H,\alpha)}\X{1}+\X{2}+\X{3}
\end{equation}
where $\X{1}$, $\X{2}$ and $\X{3}$ represent mRNA, ribosomes and protein molecules, respectively. Let us assume that the delay represents the protein maturation after translation, the letter being assumed here to happen instantaneously. In this regard, the mRNA and ribosome molecules should not be absorbed in the queue and we have
\begin{equation}
\begin{array}{rcl}
     \X{1}+\X{2}& \rarrow{k\alpha e_i} &\X{1}+\X{2}+\D{i},\\
     \D{i} &\rarrow{h_{ij}}& \D{j},\\
     \D{i} &\rarrow{-e_i^TH\mathds{1}_m} &\X{3}
\end{array}
\end{equation}
where $,i,j=1,\ldots,m,i\ne j$.
\end{example}

\begin{example}
Let us consider the same reaction as in the previous example but assume now that the delay represents the translation delay. In this regard, the ribosome and mRNA molecules should be absorbed in the queues since these molecules are made unavailable when the translation reaction takes place. Therefore, the queue should be made absorbing as
\begin{equation}
\begin{array}{rcl}
     \X{1}+\X{2}& \rarrow{k\alpha e_i} &\D{i},\\
     \D{i} &\rarrow{h_{ij}}& \D{j},\\
     \D{i} &\rarrow{-e_i^TH\mathds{1}_m} &\X{1}+\X{2}+\X{3}
\end{array}
\end{equation}
where $,i,j=1,\ldots,m,i\ne j$.
\end{example}

The above implementations may not be completely equivalent from the analysis viewpoint. Indeed, the first one may be more difficult to consider because of the catalytic nature of the bimolecular reaction which only produces mass. The second implementation may be simpler to consider as it involves a conversion reaction that both destroys and produces mass.

We are now in position to define the networks of interest:
\begin{define}[Delayed reaction network]
  A delayed reaction network is defined as the triplet $(\X{},\mathcal{R},\tau)$ where $\X{}$ is the set of molecular species, $\mathcal{R}$ is the set of reactions and $\tau$ is the vector of delays.
\end{define}
\begin{define}[Delay-free reaction network]
  Let us consider a delayed reaction network $(\X{},\mathcal{R},\tau)$. Its associated delay-free network is defined as $(\X{},\mathcal{R},0)=(\X{},\mathcal{R})$ and consists of setting all the delays to zero.
\end{define}
\begin{define}[Augmented reaction network]
  Let us consider a delayed reaction network $(\X{},\mathcal{R},\tau)$ and assume that the delays are phase-type distributed. Let us further decompose $\mathcal{R}$ as two disjoint sets $\mathcal{R}^0$ and $\mathcal{R}^\tau$ containing the non-delayed and the delayed reactions, respectively.  Then, its associated augmented network is defined as $(\X{}\cup\D{},\mathcal{R}^0\cup,\mathcal{R}_d)$ where $\D{}$ and $\mathcal{R}_d$  are the set of delay species and delay reactions, respectively.
\end{define}

\section{Ergodicity analysis of delayed reaction networks with phase-type distributed delays}\label{sec:ergodicity}

\subsection{State-space irreducibility}

The following result states the condition under which the delayed reaction network with phase-type distributed delays has an irreducible state-space:
\begin{proposition}
  Let us consider a delayed reaction network $(\X{},\mathcal{R},\tau)$ with phase-type distributed delays. Then, the following statements are equivalent:
  \begin{enumerate}[label=({\alph*})]
    \item The projected state-space onto $\mathbb{Z}^d_{\ge0}$ of the Markov process associated with the network  $(\X{},\mathcal{R},\tau)$ is irreducible.
    \item The state-space of the Markov process associated with the augmented network $(\X{}\cup\D{},\mathcal{R}^0\cup,\mathcal{R}_d)$ is irreducible.
    \item The state-space of the Markov process associated with the delay-free network $(\X{},\mathcal{R})$ is irreducible.
  \end{enumerate}
\end{proposition}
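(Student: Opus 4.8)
I would first dispatch (a)$\Leftrightarrow$(b), which is essentially a consequence of the exactness of the augmented representation, and then concentrate on the substantive equivalence (b)$\Leftrightarrow$(c). The augmented network $(\X{}\cup\D{},\mathcal{R}^0\cup\mathcal{R}_d)$ was built precisely so that its $\X{}$-marginal dynamics reproduce the delayed dynamics of $(\X{},\mathcal{R},\tau)$; the delay species $\D{}$ merely make explicit the residual-delay information that lives in the infinite-dimensional state-space of the delayed process. Consequently the set of molecular-count configurations reachable by the delayed process coincides with the image under the projection $(x,d)\mapsto x$ of the reachable set of the augmented process, and the $\X{}$-marginal can move between two of its values in one description exactly when it can in the other. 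This yields (a)$\Leftrightarrow$(b). The whole burden therefore lies in (b)$\Leftrightarrow$(c), and the tool I would use throughout is the structural fact established in Section~\ref{sec:implementation}, that each delay line is \emph{conservative} and has \emph{irreducible} state-space, so that its queue can be filled and completely emptied through a sequence of positive-propensity reactions.

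\textbf{The fill-then-flush gadget.} The crux is a correspondence between a single delay-free reaction and a block of augmented reactions. Consider a delayed reaction \eqref{eq:delayedReaction} with net stoichiometry $\zeta_k=\zeta_k^r-\zeta_k^\ell$. In either realization, firing the birth reaction consumes exactly the reactants that the delay-free reaction consumes (and, in the absorbing case, temporarily also the preserved part $f_1(\X{})$), the queue then evolves conservatively among the $\D{}$-species, and the death reaction releases exactly the products of the delay-free reaction. Hence, starting from a state $(x,0)$ with all delay lines empty, the block ``birth, route through the queue, death'' is executable precisely when the delay-free reaction is enabled at $x$, and it returns the augmented state to $(x+\zeta_k,0)$; reactions in $\mathcal{R}^0$ act identically in both networks. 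Thus a delay-free reaction is enabled at $x$ if and only if its gadget is executable from $(x,0)$, and the two have the same net effect on the $\X{}$-coordinates. This equality of enabling conditions is what makes both implications go through.

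\textbf{(c)$\Rightarrow$(b).} Assume the delay-free state-space is irreducible. Given two states $(x,d)$ and $(x',d')$ of the augmented network, I would build a positive-propensity path in three stages. First, \emph{flush}: since every delay species present enables the queue's conversions (positive off-diagonal entries of $H$) and a death reaction ($-e_i^TH\mathds{1}_m>0$ for some $i$ by Proposition~\ref{prop:existence}, together with irreducibility of the queue), one empties all delay lines and reaches an empty-delay state $(\tilde x,0)$. Second, \emph{traverse}: because the empty-delay configurations are in gadget-correspondence with the states of the delay-free network, its irreducibility connects $\tilde x$ to an empty-delay configuration $(\hat x,0)$ from which $(x',d')$ can be reached by filling, each delay-free step being realized by its gadget so that the augmented state stays on the empty-delay sublattice. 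Third, \emph{fill}: from $(\hat x,0)$ one fires the appropriate birth reactions and queue conversions to populate the delay lines to the prescribed contents and land at $(x',d')$. Composing the three stages exhibits strong connectivity of the augmented reachability graph, i.e.\ irreducibility.

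\textbf{(b)$\Rightarrow$(c) and the main obstacle.} Conversely, assume the augmented network is irreducible and fix $x,x'$ in the delay-free state-space. A path of the augmented network from $(x,0)$ to $(x',0)$ meets the empty-delay sublattice at its endpoints, and since the queues are conservative every molecule entering a delay line must eventually leave it; hence the net $\X{}$-change over the whole path decomposes into complete queue passages (each equal to some $\zeta_k$) and non-delayed reactions---that is, into delay-free reactions. The delicate point, and the step I expect to be the main obstacle, is nonnegativity: a valid reachability path must keep all counts nonnegative throughout, and one must argue that an augmented path between empty-delay states can be reorganized into a delay-free path that never leaves $\mathbb{Z}^d_{\ge0}$. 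This is exactly where the matched enabling conditions of the gadget are used: a delay-free reaction is enabled at $x$ if and only if its birth reaction is enabled at $(x,0)$, and the conservativeness of the queue guarantees the consumed reactants are not required elsewhere while the block is in progress; so each delay-free reaction can be fired in the reduced network at the moment its gadget begins. The reachability relation on the empty-delay sublattice therefore projects onto that of the delay-free network, giving its irreducibility. As a sanity check one may instead argue the contrapositive, transferring any reachability obstruction from the delay-free to the augmented network, which is legitimate since irreducibility is a purely structural, rate-independent property.
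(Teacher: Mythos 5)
Your proof is correct and follows essentially the same route as the paper's: the paper's (much terser) argument likewise reduces everything to the facts that the delay lines are conservative with irreducible state-space by construction and that delays are purely temporal, so that irreducibility is decided by the delay-free network. Your fill-then-flush gadget and the nonnegativity argument in (b)$\Rightarrow$(c) are a more rigorous elaboration of steps the paper simply asserts, but they do not constitute a different approach.
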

\begin{proof}
   Clearly, if the delayed network is irreducible then the delay-free must also be irreducible. To prove the converse, just observe that the delay-lines have an irreducible state-space by construction, hence the irreducibility only depends on the reaction network with the delays set to 0. This proves the equivalence between the two last statements. To prove the implication (c) $\Rightarrow (a)$, just note that delays are only temporal features and cannot change the irreducibility property of the state-space projected onto $\mathbb{Z}^d_{\ge0}$ provided that the delays are bounded with probability one, which is the case for phase-type distributed delays. The reverse implication is immediate here.
\end{proof}

\subsection{Ergodicity of unimolecular networks with delays}\label{sec:uni_erg}

Let us consider here the delayed reaction network $(\X{},\mathcal{R},\tau)$ with $d$ species, $K$ reactions and $n$ delays. Each delay $\tau$ is phase-type distributed, i.e. $\tau_k\sim\PH(H_i,\alpha_i)$, $i=1,\ldots,n$, for some subgenerator matrix $H_i\in\mathbb{R}^{d_i\times d_i}$ and some input probability row vector $\alpha_i\in\mathbb{R}^{d_i}_{\ge0}$. We can then define the augmented network $(\X{}\cup\D{},\mathcal{R}^0\cup,\mathcal{R}_d)$. The stoichiometric matrix $S$ associated with this augmented reaction network is given by
\begin{equation}
  H=\begin{bmatrix}
    S_x & \vline & S_{\textnormal{in},x} &\vline &  S_{\textnormal{out},x} &\vline &  0\\
    \hline
    0 & \vline & S_{\textnormal{in},d} &\vline &  S_{\textnormal{out},d} &\vline &  S_d
  \end{bmatrix}
\end{equation}
where $S_x$ is the stoichiometric matrix associated with the reactions that do not involve any delay species and, conversely, $S_d$ is the stoichiometric matrix associated with reactions that only involve delay species. The stoichiometric matrices
\begin{equation}
  S_{\textnormal{in}} :=\begin{bmatrix}
  S_{\textnormal{in},x}\\S_{\textnormal{in},d}\end{bmatrix}\ \textnormal{and}\ S_{\textnormal{out}} :=\begin{bmatrix}
  S_{\textnormal{out},x}\\S_{\textnormal{out},d}

\end{bmatrix}
\end{equation}
corresponds to conversion reactions entering the delay lines and leaving the delay lines, respectively. The associated propensity functions are given by
\begin{equation}
    \lambda_x(x):=\begin{bmatrix}
      W_xx\\
      b_x
    \end{bmatrix}, \lambda_{\textnormal{in}}(x):=\begin{bmatrix}
      W_{\textnormal{in}}x\\
      b_{\textnormal{in}}
    \end{bmatrix},
    \lambda_{\textnormal{out}}(\delta):=W_{\textnormal{out}}\delta \textnormal{ and }
    \lambda_d(\delta):=W_d\delta
\end{equation}
where $(x,\delta)\in\mathbb{Z}_{\ge0}^d\times\mathbb{Z}_{\ge0}^{d_1+\ldots+d_n}$ is the state of the augmented network. The terms $W_xx$, $b_x$, $W_{\textnormal{in}}x$, $b_{\textnormal{in}}$, $W_{\textnormal{out}}\delta$ and $W_d\delta$ represent the propensity functions of the non-delayed first-order reactions, the non-delayed zeroth-order reactions, the first-order reactions entering the delay lines, the zeroth-order reactions entering the delay lines, the propensity of the reactions leaving the delay lines and the propensity of the reactions inside the delay lines, respectively.

Let us also define the following matrices
\begin{equation}\label{eq:ABCD1}
\begin{array}{rcl}
      A&:=&S_x\begin{bmatrix}
        W_x\\
        0
      \end{bmatrix}+S_{\textnormal{in},x}\begin{bmatrix}
        W_{\textnormal{in}}\\
        0
      \end{bmatrix}\\
      B&:=&S_{\textnormal{out},x}W_{\textnormal{out}}\\
      C&:=&S_{\textnormal{in},d}\begin{bmatrix}
        W_{\textnormal{in}}\\
        0
      \end{bmatrix}\\
      D&:=&S_{\textnormal{out},d}W_{\textnormal{out}}+S_dW_d
\end{array}
\end{equation}
Calculations show that
\begin{equation}\label{eq:ABCD2}
\begin{array}{rcl}
      W_{\textnormal{out}}&:=&-\diag_{i=1}^n(\mathds{1}^TH_i^T)\\
      S_{\textnormal{in},d}&=&\diag_{i=1}^n(\alpha_i^T)\\
      W_{\textnormal{in}}&=&\begin{bmatrix}
  q_1^T\\
  \vdots\\
  q_m^T
\end{bmatrix}\\
C&=&\begin{bmatrix}
  \alpha_1^Tq_1^T\\
  \vdots\\
  \alpha_mq_n^T\\
  \hline
  0\\
  \vdots\\
  0
\end{bmatrix}\\
      D&=&H^T=\diag_i(H_i)^T
\end{array}
\end{equation}
where $q_j=r_{j}e_{\sigma(j)}$ and $r_{j}$ is the reaction rate of the delayed reaction associated with the delay line $j$ and $e_{\sigma(j)}$ is the vector of zeros except at the entry $\sigma(j)$ where it is one. Finally, $\sigma(j)$ is the mapping $\sigma:\{1,\ldots,N\}\mapsto\{1,\ldots,d\}$ where $\sigma(j)=i$ if the species $\X{i}$ acts on the production of some delay species in the delay line $j$.

Interestingly, the stoichiometric matrix associated with the delay-free reaction network is given by
\begin{equation}
  S_{\textnormal{df}}:=\begin{bmatrix}
    S_x & \vline & S_{\textnormal{in},x}+S_{\textnormal{out},x}
  \end{bmatrix}
\end{equation}
with the propensity function
\begin{equation}
\lambda_{\textnormal{df}}(x):=\begin{bmatrix}
  W_xx\\
  b_x\\
  \hline
  W_{\textnormal{in}}x\\
  b_{\textnormal{in}}
\end{bmatrix}
\end{equation}
which leads to the characteristic matrix
\begin{equation}
  A_{\textnormal{df}}=S_x\begin{bmatrix}
    W_x\\0
  \end{bmatrix}+(S_{\textnormal{in},x}+S_{\textnormal{out},x})\begin{bmatrix}
    W_{\textnormal{in}}\\
    0
  \end{bmatrix}.
\end{equation}

The following proposition will be instrumental in proving the main result of the section:
\begin{proposition}\label{prop:Schurlike}
  We have $A_{\textnormal{df}}=A-BH^{-T}C$.
\end{proposition}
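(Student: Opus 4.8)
The plan is to read the claimed identity as a Schur-complement-type relation and to verify it by a direct block computation from the explicit forms of the matrices recorded in \eqref{eq:ABCD1} and \eqref{eq:ABCD2}. Since $A$ and $A_{\textnormal{df}}$ share the summand $S_x\begin{bmatrix} W_x\\0\end{bmatrix}$ and the summand $S_{\textnormal{in},x}\begin{bmatrix} W_{\textnormal{in}}\\0\end{bmatrix}$, the first step is to cancel these common terms, which leaves
\begin{equation}
  A_{\textnormal{df}}-A = S_{\textnormal{out},x}\begin{bmatrix} W_{\textnormal{in}}\\0\end{bmatrix}.
\end{equation}
It then suffices to show that the right-hand side equals $-BH^{-T}C$. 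Substituting $B=S_{\textnormal{out},x}W_{\textnormal{out}}$ and $C=S_{\textnormal{in},d}\begin{bmatrix} W_{\textnormal{in}}\\0\end{bmatrix}$ from \eqref{eq:ABCD1}, and noting that $D=H^T$ so that $H^{-T}=(H^T)^{-1}=D^{-1}$, I would reduce the whole claim to the single identity $W_{\textnormal{out}}H^{-T}S_{\textnormal{in},d}=-I_n$; indeed, once this holds, $-BH^{-T}C=-S_{\textnormal{out},x}\bigl(W_{\textnormal{out}}H^{-T}S_{\textnormal{in},d}\bigr)\begin{bmatrix} W_{\textnormal{in}}\\0\end{bmatrix}=S_{\textnormal{out},x}\begin{bmatrix} W_{\textnormal{in}}\\0\end{bmatrix}=A_{\textnormal{df}}-A$.

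The core of the argument is therefore the identity $W_{\textnormal{out}}H^{-T}S_{\textnormal{in},d}=-I_n$, which I would prove blockwise along the partition into the $n$ delay lines, since all three factors are block diagonal with respect to it. By \eqref{eq:ABCD2}, $W_{\textnormal{out}}=-\diag_i(\mathds{1}^TH_i^T)$ has $i$-th block of size $1\times d_i$, $H^{-T}=\diag_i((H_i^T)^{-1})$ has $i$-th block of size $d_i\times d_i$, and $S_{\textnormal{in},d}=\diag_i(\alpha_i^T)$ has $i$-th block of size $d_i\times1$. The product is thus a diagonal $n\times n$ matrix whose $i$-th entry is $-\mathds{1}^TH_i^T(H_i^T)^{-1}\alpha_i^T=-\mathds{1}^T\alpha_i^T$. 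Here the factor $H_i^T$ cancels against its inverse, and the normalization $||\alpha_i||_1=1$ of the input probability row vector gives $\mathds{1}^T\alpha_i^T=\sum_k(\alpha_i)_k=1$, so every diagonal entry equals $-1$ and the identity follows.

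Combining the two steps gives $A_{\textnormal{df}}-A=-BH^{-T}C$, which is exactly the assertion $A_{\textnormal{df}}=A-BH^{-T}C$. The computation is routine once the block structure is laid out; the only genuinely load-bearing facts are the cancellation of the $H_i^T$ factors against $H^{-T}$ — which is precisely why $W_{\textnormal{out}}$ must carry the factor $H_i^T$, reflecting that the aggregate exit rate of line $i$ is governed by $-H_i\mathds{1}$ — together with $||\alpha_i||_1=1$. I expect the main thing to watch to be bookkeeping rather than any conceptual obstacle: keeping track of the sign convention in $W_{\textnormal{out}}$ and confirming that the shared terms of $A$ and $A_{\textnormal{df}}$ really do cancel so that only the $S_{\textnormal{out},x}$ contribution survives.
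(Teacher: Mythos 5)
Your proof is correct and follows essentially the same route as the paper: cancel the common terms of $A$ and $A_{\textnormal{df}}$ so that only $S_{\textnormal{out},x}\begin{bmatrix}W_{\textnormal{in}}\\0\end{bmatrix}$ survives, then exploit the block-diagonal structure of $W_{\textnormal{out}}$, $H^{-T}$ and $S_{\textnormal{in},d}$ together with the cancellation $\mathds{1}^TH_i^T(H_i^T)^{-1}=\mathds{1}^T$ and the normalization $\mathds{1}^T\alpha_i^T=1$. The paper evaluates $-W_{\textnormal{out}}H^{-T}C$ directly rather than isolating the intermediate identity $W_{\textnormal{out}}H^{-T}S_{\textnormal{in},d}=-I_n$, but that is only a cosmetic reorganization of the same computation.
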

\begin{proof}
  Comparing the expressions for  $A_{\textnormal{df}}$ and $A$, we simply need to prove that
  \begin{equation}
      -W_{\textnormal{out}}H^{-T}C=\begin{bmatrix}W_{\textnormal{in}}\\0
    \end{bmatrix}.
  \end{equation}
  We know that $W_{\textnormal{in}}=\col_{i=1}^n(q_i^T)$. Now, we have
  \begin{equation}
    \begin{array}{rcl}
      -W_{\textnormal{out}}H^{-T}C       &=&     \diag_{i=1}^n(\mathds{1}^TH_i^T)H^{-T}\begin{bmatrix}
        \col_{i=1}^n(\alpha_i^Tq_i^T)\\
        \hline
        0
      \end{bmatrix}
                                                                               =\diag_{i=1}^n(\mathds{1}^T)\begin{bmatrix}
        \col_{i=1}^n(\alpha_i^Tq_i^T)\\
        \hline
        0
      \end{bmatrix}\\
                                                                               &=&\begin{bmatrix}
        \col_{i=1}^n(q_i^T)\\
        \hline
        0
      \end{bmatrix}
      =\begin{bmatrix}W_{\textnormal{in}}\\\hline0
    \end{bmatrix}.
    \end{array}
  \end{equation}
  This completes the proof.
\end{proof}

We can now state our main result on the ergodicity of delayed unimolecular reaction networks:
\begin{theorem}\label{th:delayed:uni}
  Let us consider a delayed reaction network $(\X{},\mathcal{R},\tau)$ with zeroth- and firth-order mass-action kinetics and phase-type distributed delays for which the delay-free counterpart $(\X{},\mathcal{R})$ satisfies Assumption \ref{hyp:1}. Then, the following statements are equivalent:
  \begin{enumerate}[label=({\alph*})]
   \item The delayed reaction network  $(\X{},\mathcal{R},\tau)$ is exponentially ergodic.
   \item The augmented reaction network  $(\X{}\cup\D{},\mathcal{R}^0\cup,\mathcal{R}_d)$ is exponentially ergodic.
    \item The matrix $$\mathcal{A}:=\begin{bmatrix}
        A & \vline & B\\
        \hline
        C & \vline & H^T
  \end{bmatrix}$$ is Hurwitz stable.
  \item The matrix $A_{\textnormal{df}}$ is Hurwitz stable.
  \item The delay-free network $(\X{},\mathcal{R})$ is exponentially ergodic.
  \end{enumerate}
\end{theorem}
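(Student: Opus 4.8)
The claim establishes five equivalent conditions for the exponential ergodicity of a delayed unimolecular network. The key structural fact I have available is Theorem~\ref{thm:uni}, which for any zeroth- and first-order mass-action network satisfying Assumption~\ref{hyp:1} already equates exponential ergodicity with Hurwitz stability of the relevant "characteristic matrix" $S_uW_u$. The plan is therefore to reduce the five-way equivalence to two genuinely new links and let Theorem~\ref{thm:uni} supply the rest.

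**The proof skeleton.** First I would observe that statements (a) and (b) are really the same object: the augmented network $(\X{}\cup\D{},\mathcal{R}^0\cup\mathcal{R}_d)$ is, by construction in Section~\ref{sec:delayednetworks}, a finite-dimensional unimolecular network whose projection onto $\mathbb{Z}^d_{\ge0}$ is exactly the delayed process, so ergodicity of one is ergodicity of the other; this uses that the delay lines are conservative and contribute only bounded, phase-type temporal features. Next, since the augmented network is itself a zeroth- and first-order mass-action network satisfying Assumption~\ref{hyp:1} (irreducibility is inherited via the earlier irreducibility proposition), I would apply Theorem~\ref{thm:uni} directly to it: its characteristic matrix is precisely $\mathcal{A}$, the block matrix built from $A,B,C,H^T$ in \eqref{eq:ABCD1}--\eqref{eq:ABCD2}. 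This immediately yields the equivalence (b)$\Leftrightarrow$(c). Likewise, applying Theorem~\ref{thm:uni} to the delay-free network $(\X{},\mathcal{R})$, whose characteristic matrix is $A_{\textnormal{df}}$, gives (d)$\Leftrightarrow$(e).

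**The one nontrivial step.** What remains is the purely linear-algebraic equivalence (c)$\Leftrightarrow$(d): the big block matrix $\mathcal{A}$ is Hurwitz stable if and only if $A_{\textnormal{df}}=A-BH^{-T}C$ is. This is exactly where Proposition~\ref{prop:Schurlike} enters, identifying $A_{\textnormal{df}}$ as the Schur complement of the $H^T$ block in $\mathcal{A}$. I expect this to be the main obstacle, and I would handle it as follows. Since $H$ is a Hurwitz stable Metzler subgenerator matrix, so is $H^T$, hence the $(2,2)$ block is invertible with the whole system being a Metzler block matrix (all off-diagonal couplings $B,C$ arise from nonnegative propensities). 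For Metzler matrices Hurwitz stability is governed by a diagonal-dominance / D-stability structure, so I would argue that stability of $\mathcal{A}$ is equivalent to simultaneous stability of $H^T$ and of its Schur complement $\mathcal{A}/H^T=A-B(H^T)^{-1}C=A_{\textnormal{df}}$. Because $H^T$ is stable by hypothesis, the Schur complement criterion collapses to: $\mathcal{A}$ Hurwitz $\iff A_{\textnormal{df}}$ Hurwitz. The delicate point is justifying the Schur-complement stability equivalence in the Metzler setting rather than the symmetric/Hermitian one; I would invoke the positive-systems fact that a Metzler matrix is Hurwitz iff there is a positive vector $v$ with $v^T\mathcal{A}<0$, then use the block structure together with Proposition~\ref{prop:Schurlike} to transfer such a certificate back and forth between $\mathcal{A}$ and $A_{\textnormal{df}}$.

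**Assembling the cycle.** With (a)$\Leftrightarrow$(b) from the augmentation construction, (b)$\Leftrightarrow$(c) and (d)$\Leftrightarrow$(e) from two invocations of Theorem~\ref{thm:uni}, and (c)$\Leftrightarrow$(d) from the Schur-complement argument anchored on Proposition~\ref{prop:Schurlike}, all five statements are chained into a single equivalence class. I would close by remarking that the equivalence (c)$\Leftrightarrow$(d) is the conceptual heart: it shows the delay species, encoded in the stable block $H^T$, can be "eliminated" without affecting stability, which is precisely the statement that phase-type delays are harmless to ergodicity regardless of the delay distribution's complexity.
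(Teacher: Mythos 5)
Your proposal is correct and follows essentially the same route as the paper: equivalence of (a) and (b) by the augmentation construction, a linear Foster--Lyapunov/characteristic-matrix argument (the content of Theorem~\ref{thm:uni} applied to the augmented and delay-free networks) for (b)$\Leftrightarrow$(c) and (d)$\Leftrightarrow$(e), and the Metzler Schur-complement lemma combined with Proposition~\ref{prop:Schurlike} for (c)$\Leftrightarrow$(d). Your sketch of how to justify the Schur-complement stability equivalence via positive left certificates $v^T\mathcal{A}<0$ is in fact slightly more complete than the paper, which states that lemma without proof.
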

\begin{proof}
By assumption, the state-space of the Markov process associated with the delay-free reaction network is irreducible, Equivalently, the state-spaces of the Markov processes associated with the augmented and the delayed reaction networks are also irreducible. Since, the delay-free as no closed components. This is equivalent to saying that the augmented and the delayed reaction networks do not have closed components as well.

The statements (a) and (b) are equivalent due to the equivalence of representations.

To prove the equivalence between (b) and (c), let $\tilde{\mathbb{A}}$ be the generator of Markov process representing the augmented reaction network and define the linear form $\textstyle V(x,\delta):=v_x^Tx+v_{\delta}^T\delta$ defined for $v_x\in\mathbb{R}^d_{>0}$ and $v_{\delta}\in\mathbb{R}^{d_1+\ldots+d_n}_{>0}$. Then, the Markov process representing the augmented network is exponentially ergodic if and only if there exist $c_1\ge0$ and $c_2>0$ such that $\tilde{\mathbb{A}}V(x,\delta)\le c_1-c_2V(x,\delta)$ for all $(x,\delta)\in\mathbb{Z}_{\ge0}^{d}\times\mathbb{Z}_{\ge0}^{d_1+\ldots+d_n}$. This can be reformulated as
\begin{equation}
  \tilde{\mathbb{A}}\left(v_x^Tx+v_{\delta}^T\delta\right)=\begin{bmatrix}
    v_x\\
    \hline
    v_{\delta}
  \end{bmatrix}^T\left(\begin{bmatrix}
        A & \vline & B\\
        \hline
        C & \vline & H^T
  \end{bmatrix}\begin{bmatrix}
    x\\
    \hline
    \delta
  \end{bmatrix}+\begin{bmatrix}
    S_xb_x\\
    \hline
    S_{\textnormal{in},d}b_{\textnormal{in}}
  \end{bmatrix}\right).
\end{equation}
  So, there exists some $c_1\ge0$ and $c_2>0$ such that $\textstyle\tilde{\mathbb{A}}V(x,\delta)\le c_1-c_2V(x,\delta)$ for all $(x,\delta)\in\mathbb{Z}_{\ge0}^{d}\times\mathbb{Z}_{\ge0}^{d_1+\ldots+d_n}$ if and only if $\mathcal{A}$ is Hurwitz stable. This proves the equivalence between the statements (b) and (c).

To prove the equivalence between the statements (c), (d) and (e), we first need the following result:
  \begin{lemma}
    Let us consider a Metzler matrix $M=[M_{ij}]_{i,j=1}^2$ where both $M_{11}$ and $M_{22}$ are square. Then, the matrix $M$ is Hurwitz stable if and only if the matrices $M_{11}-M_{12}M_{22}^{-1}M_{21}$ and $M_{22}$ are Hurwitz stable.
  \end{lemma}

Since $H$ is Hurwitz stable by construction, we can apply the above result with $M_{11}=A$, $M_{12}=B$, $M_{21}=C$ and $M_{22}=H^T$. As a result we get that $\mathcal{A}$ is Hurwitz stable if and only if $A-BH^{-T}C$ is also Hurwitz stable. From Proposition \ref{prop:Schurlike}, this is equivalent to say that $A_{\textnormal{df}}$ is Hurwitz stable and that the delay-free network is exponentially ergodic. This proves the result.
\end{proof}

The above result is interesting for multiple reasons. The first one is that the shape of the distribution or, equivalently, its complexity does not have any impact of the ergodicity of the network as long as it is unimolecular. A conclusion is that one can consider \emph{almost constant deterministic delays} as one can choose an arbitrarily large, yet finite, shape value in the Erlang distribution. Yet, one cannot conclude on the ergodicity of the network in the case of a constant deterministic delay.

\subsection{Ergodicity of bimolecular networks with delays}

We address here the case of bimolecular networks with delays and we will distinguish two cases. The first one is when only unimolecular networks are delayed whereas the second one is when the network is allowed to have delayed bimolecular reactions.

\subsubsection{No bimolecular reactions enters the queue}

We consider here a delayed reaction network $(\X{},\mathcal{R},\tau)$ with zeroth-, first- and second-order mass-action kinetics and  phase-type distributed delays. We can define $S_b$ to be the stoichiometric matrix of the augmented network associated with the bimolecular reactions as
\begin{equation}
  S_b=\begin{bmatrix}
    S_{b,x}\\
    \hline
    0
  \end{bmatrix}.
\end{equation}
The rest of the matrices are the same as for unimolecular networks; see Section \ref{sec:uni_erg}. We then have the following result:
\begin{theorem}
  Let us consider a delayed reaction network $(\X{},\mathcal{R},\tau)$ with zeroth-, first- and second-order mass-action kinetics and phase-type distributed delays for which the delay-free counterpart $(\X{},\mathcal{R})$ satisfies Assumption \ref{hyp:1}. Then, the following statements are equivalent:
\begin{enumerate}[label=({\alph*})]
   \item There exist vectors $v_x\in\mathbb{R}_{>0}^d$ and $v_{\delta}\in\mathbb{R}_{>0}^{d_1+\ldots+d_n}$ verifying $\begin{bmatrix}
  v_x^T & \vline & v_\delta^T
\end{bmatrix}S_{b}=0$ for which the function $V(x,\delta)=v_x^Tx+v_\delta^T\delta$ is a Foster-Lyapunov function for the Markov process.
    \item  There exist vectors $v_x\in\mathbb{R}_{>0}^d$ and $v_{\delta}\in\mathbb{R}_{>0}^{d_1+\ldots+d_n}$  such that the conditions
  \begin{equation*}
\begin{bmatrix}
    v_x\\
    \hline
    v_\delta
  \end{bmatrix}^T\begin{bmatrix}
        A & \vline & B\\
        \hline
        C & \vline & H^T
  \end{bmatrix}<0\ \textnormal{and }    \begin{bmatrix}
  v^T & \vline & v_\delta^T
\end{bmatrix}S_b=0
  \end{equation*}
  hold.
  \item There exist a vector $v\in\mathbb{R}_{>0}^d$ satisfying $v^TS_{b,x}=0$ and $v^TA_{\textnormal{df}}<0$.
\end{enumerate}
Moreover, when one of above statements holds, the Markov process describing the delayed reaction network $(\X{},\mathcal{R})$ is exponentially ergodic and the stationary distribution is light-tailed.
\end{theorem}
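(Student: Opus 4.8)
The plan is to mirror the architecture of the proof of Theorem~\ref{th:delayed:uni}, adapting it to the bimolecular setting where the linear Foster-Lyapunov function must additionally annihilate the second-order stoichiometry. The three statements to be linked are: (a) the existence of a copositive Foster-Lyapunov function $V(x,\delta)=v_x^Tx+v_\delta^T\delta$ on the augmented network with $[v_x^T\ v_\delta^T]S_b=0$; (b) the same conditions written explicitly in terms of the block matrix $\mathcal{A}$; and (c) a reduced condition living entirely on the delay-free data $A_{\textnormal{df}}$ and $S_{b,x}$. I would establish the cycle (a)$\Leftrightarrow$(b)$\Leftrightarrow$(c), invoking Theorem~\ref{thm:bi} for the passage between the abstract Foster-Lyapunov statement and the ergodicity/light-tailedness conclusion at the end.

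First I would prove (a)$\Leftrightarrow$(b). Applying the generator $\tilde{\mathbb{A}}$ of the augmented network to the linear form $V(x,\delta)=v_x^Tx+v_\delta^T\delta$, exactly as in the proof of Theorem~\ref{th:delayed:uni}, yields
\begin{equation*}
\tilde{\mathbb{A}}V(x,\delta)=\begin{bmatrix}v_x\\v_\delta\end{bmatrix}^T\left(\mathcal{A}\begin{bmatrix}x\\\delta\end{bmatrix}+\begin{bmatrix}S_xb_x\\S_{\textnormal{in},d}b_{\textnormal{in}}\end{bmatrix}\right)+\begin{bmatrix}v_x\\v_\delta\end{bmatrix}^TS_b\,\rho(x),
\end{equation*}
where $\rho(x)$ collects the (quadratic) propensities of the second-order reactions. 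The key observation is that the structural constraint $[v_x^T\ v_\delta^T]S_b=0$ kills the quadratic term, so that $\tilde{\mathbb{A}}V$ becomes affine in $(x,\delta)$. The Foster-Lyapunov condition $\tilde{\mathbb{A}}V\le c_1-c_2V$ then holds for suitable $c_1,c_2$ precisely when $[v_x^T\ v_\delta^T]\mathcal{A}<0$, which is the first inequality in (b). This gives (a)$\Leftrightarrow$(b) directly, and Theorem~\ref{thm:bi} supplies the final conclusion on exponential ergodicity and light-tailedness once (a) is known to hold.

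The main obstacle, and the genuinely new content relative to the unimolecular case, is (b)$\Leftrightarrow$(c): I must eliminate the delay-block variable $v_\delta$ and reduce the combined linear-inequality-plus-annihilation system on the augmented data to one living only on the delay-free data. The natural route is to use the block structure $S_b=[S_{b,x}^T\ 0]^T$, which makes the annihilation constraint $[v_x^T\ v_\delta^T]S_b=v_x^TS_{b,x}=0$ independent of $v_\delta$, so the second equation in (b) and the equation in (c) coincide automatically. For the strict inequality, I would argue exactly as in Theorem~\ref{th:delayed:uni}: since $H^T$ is Hurwitz stable by construction and $\mathcal{A}$ is Metzler, the Metzler Schur-complement Lemma shows that $[v_x^T\ v_\delta^T]\mathcal{A}<0$ is solvable for some $v_\delta>0$ if and only if $v_x^T(A-BH^{-T}C)<0$, and Proposition~\ref{prop:Schurlike} identifies $A-BH^{-T}C=A_{\textnormal{df}}$. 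The delicate point is that here we need the \emph{same} $v_x$ to simultaneously satisfy the inequality $v_x^TA_{\textnormal{df}}<0$ and the equality $v_x^TS_{b,x}=0$; I would verify that the Schur-complement reduction can be carried out while holding $v_x$ fixed (recovering $v_\delta$ from $v_x$ via the Metzler-stability of $H^T$), so the annihilation constraint is preserved verbatim throughout the reduction. This fixed-$v_x$ feasibility is where the argument must be handled carefully rather than quoted, since the unimolecular proof only needed existence of \emph{some} $(v_x,v_\delta)$, whereas here the equality constraint couples the two blocks.
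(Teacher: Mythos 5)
Your proposal is correct and follows essentially the same route as the paper, whose proof consists only of citing Theorem~\ref{prop:meyn} for the equivalence of the first two statements and the argument of Theorem~\ref{th:delayed:uni} (Schur complement of the Metzler block matrix plus Proposition~\ref{prop:Schurlike}) for the third. You go further than the paper by explicitly flagging and resolving the fixed-$v_x$ subtlety --- that the reduction must eliminate $v_\delta$ while keeping the same $v_x$ so that the constraint $v_x^TS_{b,x}=0$ survives --- which the paper's two-sentence proof leaves implicit; this is a genuine and worthwhile refinement of the written argument, not a deviation from it.
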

\begin{proof}
The equivalence from the two first statements comes from a direct application of Theorem \ref{prop:meyn}. The equivalence with the last one follows from the same arguments as for Theorem \ref{th:delayed:uni}.
\end{proof}

\subsubsection{At least one bimolecular reactions enters the queue}

We consider here a delayed reaction network $(\X{},\mathcal{R},\tau)$ with zeroth-, first- and second-order mass-action kinetics and  phase-type distributed delays.  As the case of delayed unimolecular reactions can be brought back to non-delayed unimolecular reactions, we assume here that the delays only act on bimolecular reactions.

The stoichiometric matrix of the delayed-network associated with the non-delayed bimolecular reactions is denoted by $S_b$ whereas
\begin{equation}
  S_b^d=\begin{bmatrix}
  \zeta_{r,1}^d-\zeta_{\ell,1}^d & \ldots & \zeta_{r,K_d}^d-\zeta_{\ell,K_d}^d
\end{bmatrix}
\end{equation}
is the stoichiometric matrix associated with the delayed bimolecular reactions where $K_d$ is the number of delayed biomolecular reactions. After augmenting the network to incorporate the delay species and reactions, the latter becomes
\begin{equation}
S_b^i=  \begin{bmatrix}
    -\zeta_{\ell,1}^d\mathds{1}^T_{\varsigma_1} & \ldots & -\zeta_{\ell,K}^d\mathds{1}^T_{\varsigma_K}\\
    J_i^1\\
    & \ddots\\
    & & J_i^K
  \end{bmatrix}
\end{equation}
where $J_i^k$ is a matrix with entries equal to zero or one, $k=1,\ldots,K_d$, such that on each column there is one and only one entry equal to one. We also define the matrix
\begin{equation}
B_b=\begin{bmatrix}
  J^o_1 & \ldots & J^o_K
\end{bmatrix}
\end{equation}
where $J^o_k=\zeta_{r,k}^d\mathds{1}^T\Lambda^o_k$ and $\Lambda^o_k$ is a diagonal matrix with nonnegative entries representing the rate parameters of the reactions leaving the queue.

We then have the following result:
\begin{theorem}\label{th:bidelay}
 Let us consider a delayed reaction network $(\X{},\mathcal{R},\tau)$ with zeroth-, first- and second-order mass-action kinetics and  phase-type distributed delays.  for which the delay-free counterpart $(\X{},\mathcal{R})$ satisfies Assumption \ref{hyp:1}. Then, the following statements are equivalent:
\begin{enumerate}[label=(\alph*)]
\item There exist $v_x\in\mathbb{R}_{>0}^d$ and $v_{\delta}\in\mathbb{R}^{d_1+\ldots+d_n}$, verifying $v_x^T S_b=0$ and $\begin{bmatrix}
      v_x^T & v_\delta^T
    \end{bmatrix}S_b^i=0$ such that the function $V(x,\delta)=v_x^Tx+v_\delta^T\delta$ is a Foster-Lyapunov function for the associated Markov process.
 \item There exist $v_x\in\mathbb{R}_{>0}^d$ and $v_{\delta}\in\mathbb{R}^{d_1+\ldots+d_n}$, such that the conditions
\begin{equation}
  v^TS_b=0,\quad \begin{bmatrix}
    v_x^T & \vline & v_{\delta}^T
  \end{bmatrix}S_b^i=0
\end{equation}
and
  \begin{equation}
    \begin{bmatrix}
    v\\
    \hline
    v_{\delta}
    \end{bmatrix}\begin{bmatrix}
      A & \vline & B_b\\
      \hline
      0 & \vline & H^T
    \end{bmatrix}<0
    \end{equation}
    where $\textstyle H=\diag_{i=1}^{K_d}(H_i)$ and $A$ is the same as for unimolecular networks.
  \item There exists a $v\in\mathbb{R}^d_{>0}$ such that
  \begin{enumerate}[label=({\roman*})]
   \item $v^TA<0$,
    \item $v^TS_b=0$, and
    \item $v^TS_b^d<0$.
  \end{enumerate}
\end{enumerate}
Moreover, when one of the above statements hold, then the Markov process describing the augmented reaction network $(\X{}\cup\D{},\mathcal{R}^0\cup\mathcal{R}_d)$ is exponentially ergodic and its stationary distribution is light-tailed.
\end{theorem}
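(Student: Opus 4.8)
The plan is to follow the three-step pattern of the proofs of Theorem \ref{th:delayed:uni} and Theorem \ref{thm:bi}, adapting it to the block-triangular matrix $\mathcal{M}=\begin{bmatrix} A & B_b\\ 0 & H^T\end{bmatrix}$, whose vanishing lower-left block reflects that only bimolecular reactions feed the queue (so no unimolecular input matrix $C$ appears). First I would establish (a) $\Leftrightarrow$ (b) by applying Theorem \ref{prop:meyn} to the generator $\tilde{\mathbb{A}}$ of the augmented network on the linear form $V(x,\delta)=v_x^Tx+v_\delta^T\delta$. In $\tilde{\mathbb{A}}V$ the second-order reactions generate quadratic terms in $x$: the non-delayed bimolecular reactions contribute a term carrying the factor $v_x^TS_b$, while the consumption events of the delayed bimolecular reactions entering the queue contribute a term carrying $\begin{bmatrix} v_x^T & v_\delta^T\end{bmatrix}S_b^i$. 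Imposing $v_x^TS_b=0$ and $\begin{bmatrix} v_x^T & v_\delta^T\end{bmatrix}S_b^i=0$ annihilates every quadratic contribution, leaving a purely linear drift whose coefficient row vector is exactly $\begin{bmatrix} v_x^T & v_\delta^T\end{bmatrix}\mathcal{M}$, so that $\tilde{\mathbb{A}}V\le c_1-c_2V$ holds for suitable $c_1\ge0,\,c_2>0$ if and only if that row vector is negative, which is statement (b). Because the augmented network's quadratic terms cancel \emph{identically}, the light-tailedness of the \emph{moreover} part follows precisely as in Theorem \ref{thm:bi}.

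The heart of the argument is (b) $\Leftrightarrow$ (c). Expanding the block inequality columnwise yields $v_x^TA<0$, which is (c)(i) upon setting $v=v_x$, together with $v_x^TB_b+v_\delta^TH^T<0$; and $v_x^TS_b=0$ is carried verbatim to (c)(ii). The only substantive reduction is to show that the existence of a $v_\delta$ satisfying both $v_x^TB_b+v_\delta^TH^T<0$ and the constraint $\begin{bmatrix} v_x^T & v_\delta^T\end{bmatrix}S_b^i=0$ is equivalent to $v_x^TS_b^d<0$. I would mimic the Schur-type computation of Proposition \ref{prop:Schurlike}: the constraint pins down, queue by queue, the action of $v_\delta$ through $v_\delta^{(k)T}J_i^k=v_x^T\zeta_{\ell,k}^d\mathds{1}_{\varsigma_k}^T$, tying the entry weight of queue $k$ to its consumption vector $\zeta_{\ell,k}^d$. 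The engine is the mass-conservation relation of each phase-type line — that a molecule entering queue $k$ is absorbed with probability one and liberates $\zeta_{r,k}^d$ on exit — encoded algebraically by $H_k\mathds{1}=-H_{0,k}$ and $\alpha_k(-H_k^{-1})H_{0,k}=1$. Using $-H^{-1}\ge0$ (valid since $H$ is Hurwitz Metzler), I would post-multiply the row inequality $v_x^TB_b+v_\delta^TH^T<0$ by a positive weight built from the input distributions $\alpha_k$ and the fundamental matrices $-H_k^{-1}$, so that the output block $v_x^TB_b$ collapses to $v_x^T\zeta_{r,k}^d$ and, after inserting the constraint, the queue block $v_\delta^TH^T$ collapses to $-v_x^T\zeta_{\ell,k}^d$; summing over queues gives $v_x^TS_b^d<0$. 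The converse reconstructs $v_\delta^T=(w^T-v_x^TB_b)H^{-T}$ for arbitrary $w<0$ and uses (c)(iii) to confirm the constraint is satisfiable, and the block reduction is sealed by the Metzler Schur-complement lemma of Theorem \ref{th:delayed:uni} applied with vanishing lower-left block, whose Schur complement of $H^T$ is simply $A$.

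I expect the main obstacle to be the precise conservation identity linking $B_b$, $H$, $S_b^i$ and $S_b^d$, specifically the bookkeeping that makes the transposed output-rate factors $\Lambda_k^o$ and the input maps $J_i^k,\,\mathds{1}_{\varsigma_k}$ cancel \emph{exactly} rather than merely bound one another. Because the inequality lives on $H^T$ while the conservation relation $\alpha_k(-H_k^{-1})H_{0,k}=1$ naturally involves $H^{-1}$, matching the correct transposes and verifying that the chosen weight vector is strictly positive (so that the strict sign survives the collapse into the scalar $v_x^TS_b^d<0$) is the delicate point. This is also where the strict inequality $v_x^TS_b^d<0$, rather than the equality $v^TS_b=0$ of the non-delayed bimolecular case, becomes essential: the queue converts the would-be-cancelling quadratic production term into a linear one, leaving a genuine negative drift margin instead of an exact balance.
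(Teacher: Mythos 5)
Your proposal follows essentially the same route as the paper's proof: the equivalence (a)$\Leftrightarrow$(b) via the linear Foster--Lyapunov computation with the equality constraints cancelling the quadratic terms, and (b)$\Leftrightarrow$(c) by eliminating $v_\delta$ queue-by-queue using $H_k^{-1}\le 0$ together with the unit-static-gain identity $\mathds{1}^T\Lambda_k^o H_k^{-T}J_i^k=-\mathds{1}^T$, which is exactly the paper's mechanism (the paper introduces a positive slack $q_k$ and substitutes, whereas you post-multiply the inequality by the nonnegative weight $-H_k^{-T}J_i^k$ --- the same elimination). Both your converse and the paper's leave the existence of a strictly signed slack/weight solving the resulting equality equally informal, so no additional gap is introduced relative to the paper.
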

\begin{proof}
Similar calculations as in the previous proofs show that the conditions of the first statement can be equivalently reformulated as the conditions in statement (b). This proves the equivalence between the statements (a) and (b). The conditions of statement (b) can be reformulated as
\begin{equation}\label{eq:jdskljdlsdajdaksldjlaj}
  \begin{array}{ccc}
        v_x^TS_b&=&0\\
        -v_x^T\zeta_{\ell,1}^d\mathds{1}^T+v_{\delta,1}^T J_i^1&=&0\\
        \vdots & \vdots& \vdots\\
        -v_x^T\zeta_{\ell,K_d}^d\mathds{1}^T+v_{\delta,K_d}^T J_i^{K_d}&=&0\\
        v_x^TA&<&0\\
        v_x^TJ_1^0+v_{\delta,1}^TH_1^T&<&0\\
       \vdots& \vdots& \vdots\\
         v_x^TJ_{K_d}^0+v_{\delta,K_d}^TH_{K_d}^T&<&0,
  \end{array}
\end{equation}
where $v_\delta=:(v_{\delta,1},\ldots,v_{\delta,{K_d}})\in\mathbb{R}_{>0}^{d_1}\times\mathbb{R}_{>0}^{d_{K_d}}$. We can put aside the conditions $v_x^TA<0$ and $v_x^TS_b=0$ (two first conditions in the statement (c)) and consider each of the $v_{\delta,k}$'s separately as the corresponding conditions are uncoupled.  The idea is to turn the system of inequality and equality conditions into a system of equality conditions. To this aim, let us define a positive vector $q_k$ of appropriate dimensions and consider the equality $v_x^TJ_k^0+v_{\delta,k}^TH_k^T=-q_k^T$. Since the matrix $H_k$ is Metzler and Hurwitz stable by construction, then we have that $H_k^{-1}\le0$. Hence, we have that $v_x^TJ_k^0H_k^{-T}+v_{\delta,k}^T=-q_k^TH_k^{-T}$ or, equivalently, $v_{\delta,k}^T=-v_x^TJ_k^0H_k^{-T}-q_k^TH_k^{-T}$. Combining this with the other equality yields
\begin{equation}
  \left[v_x^TJ_k^0H_k^{-T}+q_k^TH_k^{-T}\right]J_i^k+v_x^T\zeta_{\ell,k}^d\mathds{1}_{\varsigma_k}=0
\end{equation}
We note that
\begin{equation}
\begin{array}{rcl}
  J_k^0H_k^{-T}J_i^k&=&\zeta_{\ell,k}^d\mathds{1}^T\Lambda_k^oH_k^{-T}J_i^k\\
                                            &=& -v_x^T\zeta_{r,k}^d\mathds{1}_{\varsigma_k}^T
\end{array}
\end{equation}
where we have used the fact that $\mathds{1}^T\Lambda_k^oH_k^{-T}J_i^k$ is the negative of the static-gain of the delay-line, which is equal to a vector of ones due to the property of conservativity of delays. This leads to the following equality
\begin{equation}\label{eq:dsl;dkldskdlsldksdlsklskldkslkdl}
  v_x^T(\zeta_{\ell,k}^d-\zeta_{r,k}^d)\mathds{1}_{\varsigma_k}=-q_k^TH_k^{-T}J_i^k,
\end{equation}
for $k=1,\ldots,K_d$. Since $q_k>0$, then this means that a necessary condition for the conditions in \eqref{eq:jdskljdlsdajdaksldjlaj} is that $v_x^T(\zeta_{r,k}-\zeta_{\ell,k})<0$. We know that $H_k^{-T}J_i^k$ is full column rank since $H_k$ is invertible and $J_i^k$ is full-column rank. In this regard for any given $v_x$, we can find a $q_k$ such that \eqref{eq:dsl;dkldskdlsldksdlsklskldkslkdl} holds. This proves the equivalence with the last statement.
\end{proof}

\begin{example}
  Let us consider the epidemiological network
  \begin{equation}
  \begin{array}{c}
        \phib\rarrow{k_1}\X{1}\rarrow{\gamma_1}\phib,\\
        \phib\rarrow{k_2}\X{2}\rarrow{\gamma_2}\phib,\\
        \phib\rarrow{k_3}\X{3}\rarrow{\gamma_3}\phib,\\
        \X{2}+\X{3}\rarrow{k_i,\tau_i} 2\X{2},\\
        \X{2}\rarrow{k_r,\tau_r}\X{3},\\
        \X{3}\rarrow{k_s,\tau_s}\X{1}
  \end{array}
    \end{equation}
    where $\X{1}$, $\X{2}$ and $\X{3}$ denote the susceptible, infectious and recovered people, respectively. The first three rows represent the inflow and outflow of those people in the system, followed by the contamination reaction, the recovery reaction and the susceptibility reaction. We assume that only the three last reactions are affected by some delays. The stoichiometric matrix restricted to bimolecular reactions is given by $\begin{bmatrix}
      -1 & 1 &0
    \end{bmatrix}^T$ and hence Theorem \ref{th:bidelay} demands that $-v_1+v_2<0$. We also have that $S_b=0$ and
    \begin{equation}
      A=A_{\textnormal{df}}=\begin{bmatrix}
        -\gamma_1 & 0 & k_s\\
         0& -\gamma_2-k_r & 0\\
        0 & k_r & -\gamma_3-k_s
      \end{bmatrix}
    \end{equation}
    where we have already removed the delayed unimolecular reactions and replaced them by their delay-free counterparts. A necessary condition for the feasibility of $v^TA_{\textnormal{df}}<0$ is that the matrix $A_{\textnormal{df}}$ be Hurwitz stable. One can see that this matrix is structurally stable in the sense that it is Hurwitz stable for all positive values of its parameters. Indeed, one has that $\mathds{1}^TA_{\textnormal{df}}<0$ for all positive values of the parameters. To prove that it the network is also structurally ergodic in the presence of the delays, we need to verify $v^TA_{\textnormal{df}}<0$ for some $v>0$ such that $-v_1+v_2<0$. Let $v=\begin{bmatrix}
      1+\eps & 1 & 1
    \end{bmatrix}^T$, then we get that $$v^T A_{\textnormal{df}}=\begin{bmatrix}
      -\gamma_1(1+\eps) & -\gamma_2 & \eps k_s-\gamma_3
    \end{bmatrix}$$ and, hence, picking any $0<\eps<\gamma_3/k_s$ proves that the delayed reaction network is ergodic. Even stronger, the delayed reaction network is structurally ergodic.
\end{example}

\section{Moments equation}\label{sec:moments}

The goal of this section is to describe the influence of phase-type distributed delays on the moment dynamics and their stationary values.

\subsection{Dynamics of the first-order moments}

For simplicity, let us consider here delayed reaction network $(\X{},\,mathcal{R},\tau)$ with zeroth- and first-order mass-action kinetics. Let us consider the associated augmented network $(\X{}\cup\D{},\mathcal{R}^0\cup\mathcal{R}_d)$. The moment dynamics of the augmented network can be generically written as
  \begin{equation}\label{eq:momentsystemdsds}
  \dfrac{d}{dt}\begin{bmatrix}
    \E[X(t)]\\
    \E[D(t)]
  \end{bmatrix}=\begin{bmatrix}
    A & B\\
    C & H^T
  \end{bmatrix}\begin{bmatrix}
    \E[X(t)]\\
    \E[D(t)]
  \end{bmatrix}+\begin{bmatrix}
   b_0\\
   b_d
  \end{bmatrix}
\end{equation}
where the matrices are defined as in Section \ref{sec:uni_erg} together with
\begin{equation}
  b_0:= S_x\begin{bmatrix}
    0\\
    \hline
    b_x
  \end{bmatrix}\ \textnormal{and } b_d= S_{\textnormal{in},d}\begin{bmatrix}
    0\\
    \hline
    b_{\textnormal{in}}
  \end{bmatrix}.
\end{equation}
We then have the following result:
\begin{proposition}
The dynamics of the moments first-order moments of the delayed network $(\X{},\mathcal{R},\tau)$  is described by
  \begin{equation}\label{eq:moments2}
\begin{array}{rcl}
  \dfrac{d}{dt}\E[X(t)]&=&A\E[X(t)]+\int_0^tS_{\textnormal{out},x}f(s)\begin{bmatrix}
     W_{\textnormal{in}}\E[X(t-s)]\\
    \hline
    0
  \end{bmatrix}ds+b_0\\
  &&+Be^{H^Tt}\E[D(0)]+S_{\textnormal{out},x}F(t)\begin{bmatrix}
    0\\
    \hline
    b_{\textnormal{in}}
  \end{bmatrix}
\end{array}
\end{equation}
with $\textstyle F(t):=\diag_{i=1}^n(F_i(t))$  and $\textstyle f(s):=\diag_{i=1}^n(f_i(s))$ where $F_i(t):=1-\alpha_ie^{H_it}\mathds{1}$ and $f_i(s):=-\alpha_ie^{H_is}H_i\mathds{1}$ are the cumulative distribution and the probability density function of the delay $\tau_i\sim\PH(H_i,\alpha_i)$.
\end{proposition}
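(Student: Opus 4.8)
The plan is to eliminate the delay-species moments $\E[D(t)]$ from the coupled linear system \eqref{eq:momentsystemdsds} and read off the resulting integro-differential equation for $\E[X(t)]$ alone. Since $H^T$ is Hurwitz (hence invertible) and the second block-row of \eqref{eq:momentsystemdsds} reads $\frac{d}{dt}\E[D(t)]=C\E[X(t)]+H^T\E[D(t)]+b_d$, I would first solve it by the variation-of-constants formula
\[
\E[D(t)]=e^{H^Tt}\E[D(0)]+\int_0^t e^{H^T(t-s)}\big(C\E[X(s)]+b_d\big)ds,
\]
and substitute this into the first block-row $\frac{d}{dt}\E[X(t)]=A\E[X(t)]+B\E[D(t)]+b_0$. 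This immediately splits the dynamics into the free part $A\E[X(t)]+b_0$, the homogeneous contribution $Be^{H^Tt}\E[D(0)]$, the convolution $B\int_0^t e^{H^T(t-s)}C\E[X(s)]\,ds$, and the forced part $B\int_0^t e^{H^T(t-s)}b_d\,ds$. It then remains to recognize the last two as the density and cumulative-distribution terms appearing in \eqref{eq:moments2}.

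For the convolution term I would substitute $u=t-s$ and evaluate the kernel $Be^{H^Tu}C$ using the block-diagonal structure recorded in \eqref{eq:ABCD2}. Writing $B=S_{\textnormal{out},x}W_{\textnormal{out}}$ with $W_{\textnormal{out}}=-\diag_i(\mathds{1}^TH_i^T)$ and taking the $i$-th block-row of $C$ to be $\alpha_i^Tq_i^T$, the $i$-th block of $W_{\textnormal{out}}e^{H^Tu}C$ equals $-\mathds{1}^TH_i^Te^{H_i^Tu}\alpha_i^Tq_i^T$. Transposing the scalar factor, $\mathds{1}^TH_i^Te^{H_i^Tu}\alpha_i^T=\alpha_ie^{H_iu}H_i\mathds{1}$, and comparing with $f_i(u)=-\alpha_ie^{H_iu}H_i\mathds{1}$ shows that this block is exactly $f_i(u)q_i^T$. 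Stacking over $i$ and using $W_{\textnormal{in}}=\col_i(q_i^T)$ together with $f(u)=\diag_i(f_i(u))$ gives $W_{\textnormal{out}}e^{H^Tu}C=f(u)W_{\textnormal{in}}$, whence $Be^{H^Tu}C=S_{\textnormal{out},x}f(u)W_{\textnormal{in}}$, which is precisely the convolution kernel of \eqref{eq:moments2}.

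For the forced term I would use $\int_0^t e^{H^Tu}du=(H^T)^{-1}(e^{H^Tt}-I)$ and the conservativity identity $W_{\textnormal{out}}(H^T)^{-1}=-\diag_i(\mathds{1}^T)$, which holds block-wise since $(-\mathds{1}^TH_i^T)H_i^{-T}=-\mathds{1}^T$. Writing $b_d=S_{\textnormal{in},d}\left[\begin{smallmatrix}0\\b_{\textnormal{in}}\end{smallmatrix}\right]$ with $S_{\textnormal{in},d}=\diag_i(\alpha_i^T)$, and using $\alpha_i\mathds{1}=1$ (because $\|\alpha_i\|_1=1$) together with $F_i(t)=1-\alpha_ie^{H_it}\mathds{1}$, the $i$-th block collapses to $F_i(t)$ times the corresponding entry of $b_{\textnormal{in}}$. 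Assembling these blocks yields $B\int_0^t e^{H^Tu}b_d\,du=S_{\textnormal{out},x}F(t)\left[\begin{smallmatrix}0\\b_{\textnormal{in}}\end{smallmatrix}\right]$ with $F(t)=\diag_i(F_i(t))$, matching the final term of \eqref{eq:moments2}; the homogeneous term $Be^{H^Tt}\E[D(0)]$ is already in the required form.

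The main obstacle is purely the block-diagonal bookkeeping: one must track carefully how the blocks of $W_{\textnormal{out}}$, $e^{H^Tt}$, and $C$ (respectively $b_d$) combine and pinpoint where the phase-type density $f_i$ and distribution $F_i$ emerge. The conceptual crux is recognizing that left-multiplication by $W_{\textnormal{out}}=-\diag_i(\mathds{1}^TH_i^T)$ is exactly what converts the matrix exponential $e^{H_it}$ into the exit-rate functions of the queue: contracting with $-H_i\mathds{1}$ on the right reproduces the density $f_i$, while integrating and contracting with $\mathds{1}$ reproduces $1-\alpha_ie^{H_it}\mathds{1}=F_i$. Once these two identities are in place, the remaining manipulations are routine linear algebra.
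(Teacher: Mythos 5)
Your proposal is correct and follows essentially the same route as the paper's proof: variation of constants to eliminate $\E[D(t)]$, substitution into the $X$-block, and the two block-diagonal identities $W_{\textnormal{out}}e^{H^Tu}C=f(u)\bigl[\begin{smallmatrix}W_{\textnormal{in}}\\0\end{smallmatrix}\bigr]$ and $W_{\textnormal{out}}H^{-T}(e^{H^Tt}-I)\,S_{\textnormal{in},d}=F(t)$ (up to placement of the zero block) that expose the density and distribution functions. Your sign bookkeeping for the forced term $\int_0^te^{H^Tu}b_d\,du$ is in fact cleaner than the paper's intermediate display, which carries a sign/factor typo that cancels in the final expression.
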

\begin{proof}
Solving for $ \E[D(t)]$ in the moment system \eqref{eq:momentsystemdsds} yields
\begin{equation}
  \E[D(t)]=e^{H^Tt}\E[D(0)]+\int_0^te^{H^Ts}C\E[X(t-s)]ds+H^{-T}(I-e^{H^Tt}) b_d.
  \end{equation}
After substitution of the above expression in  \eqref{eq:momentsystemdsds}, we obtain
\begin{equation}\label{eq:moments}
\begin{array}{lcl}
  \dfrac{d}{dt}\E[X(t)]&=&A\E[X(t)]+b_0+Be^{H^Tt}\E[D(0)]-H^{-T}(I-e^{H^Tt})b_d\\
  &&+B\int_0^te^{H^Ts}C\E[X(t-s)]ds.
\end{array}
\end{equation}
We have that
\begin{equation}
\begin{array}{rcl}
    Be^{H^Ts}C&=&S_{\textnormal{out},x}\diag_{i=1}^n(-\mathds{1}^TH_i^Te^{H_i^Ts})\begin{bmatrix}
      \col_{i=1}^m(\alpha_i^Tq_i^T)\\
      \hline
      0
    \end{bmatrix}\\
    &=&S_{\textnormal{out},x}\begin{bmatrix}
      \col_{i=1}^m(-\mathds{1}^TH_i^Te^{H_i^Ts}\alpha_i^Tq_i^T)\\
      \hline
      0
    \end{bmatrix}\\
    &=&S_{\textnormal{out},x}\begin{bmatrix}
     \col_{i=1}^m(f_i(s)q_i^T)\\
      \hline
      0
    \end{bmatrix}\\
    &=&S_{\textnormal{out},x}f(s)\begin{bmatrix}
      W_{\textnormal{in}}\\
      \hline
      0
    \end{bmatrix}.
\end{array}
\end{equation}
where $\textstyle f(s)=\diag_{i=1}^n(f_i(s))$ and $f_i(s)=-\alpha_ie^{H_is}H_i\mathds{1}$ is the probability density function of $\tau_i\sim\PH(H_i,\alpha_i)$.  We also have that
\begin{equation}
  \begin{array}{rcl}
    -BH^{-T}(I-e^{H^Tt})b_d&=&S_{\textnormal{out},x}\diag_{i=1}^n(\mathds{1}^TH_i^T)H^{-T}(I-e^{H^Tt})\diag_{i=1}^n(\alpha_i^T)\begin{bmatrix}
      0\\
      \hline
       b_{\textnormal{in}}
    \end{bmatrix}\\
    &=&S_{\textnormal{out},x}\diag_{i=1}^n(\mathds{1}^T(I-e^{H_i^Tt}))\diag_{i=1}^n(\alpha_i^T)\begin{bmatrix}
      0\\
      \hline
       b_{\textnormal{in}}
       \end{bmatrix}\\
       &=&S_{\textnormal{out},x}\diag_{i=1}^n(\mathds{1}^T(I-e^{H_i^Tt})\alpha_i^T)\begin{bmatrix}
      0\\
      \hline
       b_{\textnormal{in}}
       \end{bmatrix}\\
       &=&S_{\textnormal{out},x}F(t)\begin{bmatrix}
      0\\
      \hline
       b_{\textnormal{in}}
       \end{bmatrix}
  \end{array}
\end{equation}
where $\textstyle F(t)=\diag_{i=1}^n(F_i(t))$ and $F_i(t)=1-\alpha_ie^{H_it}\mathds{1}$ is the cumulative distribution function of $\tau_i\sim\PH(H_i,\alpha_i)$. The result follows.
\end{proof}
From the above expression, we can see that the first-order moment dynamics involves convolutions with kernels corresponding to the probability density functions of the delays. In this regard, the presence of phase-type distributed delays in the stochastic dynamics corresponds to filtering terms in the mean dynamics. Interestingly, this connects very well with the fact that if we substitute the convolution kernels by the Dirac distribution, we obtain a standard delay system since
\begin{equation}
  \int_0^t\delta(s-\bar\tau)x(t-s)ds=x(t-\bar\tau),t\ge\bar\tau.
\end{equation}
Even if the result has been derived for unimolecular reaction networks, it readily generalizes to networks having more complex propensities at the expense of clarity.

\subsection{Invariance of the stationary first-order moments}

We have shown in the previous section that the dynamics of the first-order moments are affected by the presence of phase-type distributed delays. We have also proven that an augmented network is ergodic provided that its delay-free counterpart is. As a result, the first-order moments converge to a unique equilibrium point. We prove here that the stationary value for the first-order moments for the augmented network coincides with that of the delay-free and the delayed network.
\begin{proposition}
Let us consider here delayed reaction network $(\X{},\mathcal{R},\tau)$ with zeroth- and first-order mass-action kinetics, and define its associated augmented network as $(\X{}\cup\D{},\mathcal{R}^0\cup\mathcal{R}_d)$. Assume that the Markov process associated with the augmented network is ergodic.

Then, the stationary first-order moments of the augmented reaction network (and thus that of the delayed reaction network) coincide with the stationary first-order moments of the delay-free reaction network.
\end{proposition}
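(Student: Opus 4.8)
The plan is to work with the augmented first-order moment system \eqref{eq:momentsystemdsds} at stationarity and to eliminate the delay-species moments by the same Schur-complement manoeuvre that underlies Proposition \ref{prop:Schurlike}, but now carrying the affine (zeroth-order) terms through the computation as well. Since the augmented network is assumed ergodic, Theorem \ref{th:delayed:uni} guarantees that $A_{\textnormal{df}}$ is Hurwitz stable, hence invertible, and the first-order moments converge to a unique finite stationary value at which the left-hand side of \eqref{eq:momentsystemdsds} vanishes. Denoting the stationary moments by $\E_\pi[X]$ and $\E_\pi[D]$, this yields the coupled algebraic relations $A\E_\pi[X]+B\E_\pi[D]+b_0=0$ and $C\E_\pi[X]+H^T\E_\pi[D]+b_d=0$.

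First I would solve the second relation for the delay-species moments, $\E_\pi[D]=-H^{-T}(C\E_\pi[X]+b_d)$, which is legitimate because $H$ (and thus $H^T$) is Hurwitz stable by construction. Substituting into the first relation produces $(A-BH^{-T}C)\E_\pi[X]+b_0-BH^{-T}b_d=0$, and Proposition \ref{prop:Schurlike} identifies the matrix coefficient as precisely $A_{\textnormal{df}}$. The whole argument then reduces to showing that the affine term $b_0-BH^{-T}b_d$ coincides with the constant term $b_{\textnormal{df}}:=S_x\col(0,b_x)+(S_{\textnormal{in},x}+S_{\textnormal{out},x})\col(0,b_{\textnormal{in}})$ of the delay-free first-order moment equation $\frac{d}{dt}\E[X]=A_{\textnormal{df}}\E[X]+b_{\textnormal{df}}$.

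This matching of the affine terms is the constant-term analogue of Proposition \ref{prop:Schurlike}, and it is the step I expect to require the most care. Using $B=S_{\textnormal{out},x}W_{\textnormal{out}}$, $W_{\textnormal{out}}=-\diag_i(\mathds{1}^TH_i^T)$ and the block-diagonal structure $H^{-T}=\diag_i(H_i^{-T})$, I would first compute $-BH^{-T}=S_{\textnormal{out},x}\diag_i(\mathds{1}^TH_i^TH_i^{-T})=S_{\textnormal{out},x}\diag_i(\mathds{1}^T)$, which is exactly the unit static-gain (conservativity) property of each delay line. Combining this with $b_d=S_{\textnormal{in},d}\col(0,b_{\textnormal{in}})$ and $S_{\textnormal{in},d}=\diag_i(\alpha_i^T)$ gives $-BH^{-T}b_d=S_{\textnormal{out},x}\diag_i(\mathds{1}^T\alpha_i^T)\col(0,b_{\textnormal{in}})$, in which each scalar block satisfies $\mathds{1}^T\alpha_i^T=||\alpha_i||_1=1$ by the normalization of the input probability vectors. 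Hence $-BH^{-T}b_d=S_{\textnormal{out},x}\col(0,b_{\textnormal{in}})$. Since the zeroth-order reactions entering the queues do not act on $X$, one has $S_{\textnormal{in},x}\col(0,b_{\textnormal{in}})=0$, so that $b_0=S_x\col(0,b_x)$ and therefore $b_0-BH^{-T}b_d=S_x\col(0,b_x)+S_{\textnormal{out},x}\col(0,b_{\textnormal{in}})=b_{\textnormal{df}}$, as desired.

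Finally I would conclude: the stationary mean $\E_\pi[X]$ satisfies $A_{\textnormal{df}}\E_\pi[X]+b_{\textnormal{df}}=0$, which is exactly the stationary equation governing the delay-free mean, and since $A_{\textnormal{df}}$ is invertible both are equal to $-A_{\textnormal{df}}^{-1}b_{\textnormal{df}}$. Because the $X$-block of the augmented network carries the same information as the delayed network, this proves that the stationary first-order moments of the delayed (and augmented) network coincide with those of the delay-free network; the delay-species means are then fixed by $\E_\pi[D]=-H^{-T}(C\E_\pi[X]+b_d)$ but are irrelevant to the comparison. An equivalent route, which I would keep in reserve, is to pass to the limit $t\to\infty$ directly in the delayed mean equation \eqref{eq:moments2}, using $\int_0^\infty f_i(s)\,ds=1$ and $F_i(\infty)=1$ to collapse the convolution and transient terms into the same identity.
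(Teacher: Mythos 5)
Your proof is correct, but it takes a different route from the paper's. The paper first establishes the convolution form of the delayed mean dynamics (equation \eqref{eq:moments2}) and then passes to the limit $t\to\infty$ in that equation, collapsing the convolution via $\int_0^\infty f(s)\,ds=I$ and discarding the transient terms $Be^{H^Tt}\E[D(0)]$ and the CDF term; the probabilistic content (the delay kernel integrates to one) is what makes the stationary equation reduce to the delay-free one. You instead never leave the finite-dimensional augmented system \eqref{eq:momentsystemdsds}: you set the derivative to zero, eliminate $\E_\pi[D]$ by the Schur complement already computed in Proposition \ref{prop:Schurlike}, and then prove the constant-term analogue $b_0-BH^{-T}b_d=b_{\textnormal{df}}$ via the unit static gain $-\mathds{1}^TH_i^TH_i^{-T}=-\mathds{1}^T$ (up to sign, the conservativity of the delay line) and $\|\alpha_i\|_1=1$, together with the observation that $S_{\textnormal{in},x}\col(0,b_{\textnormal{in}})=0$. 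Your version buys two things: it does not depend on the convolution-form proposition at all, and it sidesteps the interchange of limit and convolution integral that the paper's argument implicitly performs (justifying $\lim_{t\to\infty}\int_0^t f(s)W_{\textnormal{in}}\E[X(t-s)]\,ds=\bigl(\int_0^\infty f(s)\,ds\bigr)W_{\textnormal{in}}\E_\pi[X]$ strictly requires a dominated-convergence step). What the paper's route buys is the interpretation of the stationary identity as a statement about probability densities of the delays, which connects directly to the preceding discussion of filtering. You correctly note the paper's route as your reserve argument, so the two proofs are consistent with one another.
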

\begin{proof}
Since the Markov process associated with the augmented network is ergodic, then there exists a unique stationary distribution and the limit $\lim_{t\to\infty}\E[X(t)]=\E_\pi[X]$ holds regardless of the initial conditions. Furthermore, it solves
\begin{equation}\label{eq:stat}
\begin{array}{rcl}
0&=&\lim_{t\to\infty}\left(A\E[X(t)]+\int_0^tS_{\textnormal{out},x}f(s)\begin{bmatrix}
     W_{\textnormal{in}}\E[X(t-s)]\\
    \hline
    0
  \end{bmatrix}ds+b_0\right.\\
  &&\left.+Be^{H^Tt}\E[D(0)]+S_{\textnormal{out},x}F(s)\begin{bmatrix}
    0\\
    \hline
    b_{\textnormal{in}}
  \end{bmatrix}\right).
\end{array}
\end{equation}
This yields
\begin{equation}
\left(A+S_{\textnormal{out},x}\begin{bmatrix}
    W_{\textnormal{in}}\\
    \hline
  0
  \end{bmatrix}\right)\E_\pi[X]+S_x\begin{bmatrix}
    b_x\\
    \hline
    0
  \end{bmatrix}+S_{\textnormal{out},x}\begin{bmatrix}
    0\\
    \hline
    b_{\textnormal{in}}
  \end{bmatrix}=0
\end{equation}
where we have used the fact that $\textstyle \int_0^\infty f(s)ds=I$ and $F(t)\to0$ as $t\to\infty$. Finally, noting that the term in brackets is equal to $A_{\textnormal{df}}$, the result follows.
\end{proof}

This result then demonstrates that phase-type distributed delay do not affect the stationary value of the first-order moments.

\subsection{Variance is affected by the delays}

In the previous section, we have shown that the stationary first-order moments of the network remain the same regardless the presence of the delays. However, the variance is influenced by the presence of the delay. Unfortunately, it seems difficult to prove that in the general setting. So, instead we will prove that in the case of a gene expression network with maturation delay. To this aim, let us consider the gene expression network
\begin{equation}
\begin{array}{l}
    \phib\rarrow{k_1}\X{1}\rarrow{\gamma_1}\phib,\\
    \X{1}\rarrow{k_2}\X{1}+\X{2},\\
    \X{2}\rarrow{\gamma_2}\phib
\end{array}
\end{equation}
and its delayed counterpart
\begin{equation}
\begin{array}{l}
    \phib\rarrow{k_1}\X{1}\rarrow{\gamma_1}\phib,\\
    \X{1}\rarrow{k_2}\X{1}+\D{1},\\
    \X{2}\rarrow{\gamma_2}\phib,\\
    \D{1}\rarrow{\lambda}\X{2}\\
\end{array}
\end{equation}
where $\X{1}$ and $\X{2}$ denote the mRNA and the protein species, respectively. The delay only consist of one phase here and is therefore exponentially distributed with rate $\lambda$. Calculations show that the protein variance is
\begin{equation}
  V(X_2)=\mu\left(1+\dfrac{k_2}{\gamma_1+\gamma_2}\right)
\end{equation}
in the delay-free case but is equal to
\begin{equation}
  V_\lambda(X_2)=\mu\dfrac{\gamma_1\gamma_2(\gamma_1+\gamma_2)+ \lambda(\gamma_1+\gamma_2)(\gamma_1+\gamma_2+k_2) + \lambda^2(\gamma_1+\gamma_2+ k_2)}{(\gamma_1 + \gamma_2)(\gamma_1 + \lambda)(\gamma_2 + \lambda)}.
\end{equation}
Not surprisingly, we have that $V_\lambda(X_2)\to V(X_2)$ as $\lambda\to\infty$ since, in this case, there is no waiting time in the queue. To compare, we define the ratio
\begin{equation}
  R(\lambda)=\dfrac{V_\lambda(X_2)}{ V(X_2)}=\dfrac{\gamma_1\gamma_2(\gamma_1+\gamma_2)}{(\gamma_1 + \lambda)(\gamma_2 + \lambda)(\gamma_1+\gamma_2+k_2)}+ \dfrac{\lambda(\gamma_1+\gamma_2) + \lambda^2}{(\gamma_1 + \lambda)(\gamma_2 + \lambda)}.
\end{equation}
Calculations show that
 \begin{equation}
   R'(\lambda)=\dfrac{\gamma_1\gamma_2(\gamma_1k_2 + \gamma_2k_2 + 2k_2\lambda)}{(\gamma_1 + \gamma_2 + k_2)(\gamma_1+\lambda)^2(\gamma_2 + \lambda)^2}
 \end{equation}
and, therefore, the function $V_\lambda(X_2)$ is strictly increasing and verifies
\begin{equation}
  \mu<V_\lambda(X_2)<V(X_2)=\mu\left(1+\dfrac{k_2}{\gamma_1+\gamma_2}\right)
\end{equation}
for all $\lambda\in(0,\infty)$. This demonstrates that in the case of gene expression, the protein variance is influenced by the presence of the delay. Interestingly, one can see that the variance is smaller and that the queue has a filtering effect. Another interesting fact is the lower bound which imposes a lower bounds on how much the noise can be reduced and this lower bound corresponds to the variance of a Poisson process.

\section{Antithetic integral control of unimolecular reaction network with phase-type distributed delays}\label{sec:AIC}

We now address the problem of controlling a delayed stochastic reaction network $(\X{},\mathcal{R},\tau)$ with first-order mass action kinetics using a delayed antithetic integral controller
\begin{equation}\label{eq:delayedAIC}
  \Z{1}\rarrow{k,\tau(H_1,\alpha_1)}\Z{1}+\X{1}, \X{\ell}\rarrow{\theta,\tau(H_n,\alpha_n)}\X{\ell}+\Z{2},\phib\rarrow{\mu}\Z{1},\Z{2}+\Z{1}\rarrow{\eta}\phib.
\end{equation}
Note that the annihilation reaction does not need to be delayed since it is a degradation reaction. Indeed, adding a delay line here will not change the behavior of the system as the compound will simply follow the delay line and be degraded in the end. However, from the controller point of view, degradation already occurred when the compound entered the delay line. Similarly, the reference reaction does not need to be delayed since from the point of view of the controller, only the output of the delay line will be visible and the actual content of the delay line does not matter.

The closed-loop network obtained from the interconnection of $(\X{},\mathcal{R},\tau)$ and the antithetic integral controller \ref{eq:delayedAIC} is denoted by $(\X{}\cup\Z{},\mathcal{R}\cup\mathcal{R}_{\textnormal{AIC}},\tau)$. The associated augmented and delay-free networks are denoted by $(\X{}\cup\Z{}\cup\D{},\mathcal{R}^0\cup\mathcal{R}_d\cup\mathcal{R}^0_{\textnormal{AIC}}\cup\mathcal{R}_{\textnormal{AIC},d})$ and $(\X{}\cup\Z{},\mathcal{R}\cup\mathcal{R}_{\textnormal{AIC}},0)$, respectively.

We assume that the actuated species $\X{1}$ is produced after a stochastic time-varying delay modeled by the first delay-line whereas the sensing reaction is using the last delay line. In this regard, we have that $q_1=0$, $q_n=\theta e_\ell$. For technical reasons, we assume that those delays are such that $\alpha_1,\alpha_n,H_1\mathds{1}$ and $H_n\mathds{1}$ have one and only one nonzero entry. This assumption is only very mildly restrictive. Based on these facts, we can define the following matrices
\begin{equation}
  \begin{array}{rcl}
  B&:=&\begin{bmatrix}
    -e_1\mathds{1}^TH_1^T &\vline& S_{\textnormal{out},x}W_{\textnormal{out}} &\vline& 0
  \end{bmatrix}\\
C&=& \begin{bmatrix}
  0\\
  \hline
  \alpha_2^Tq_2\\
  \vdots\\
  \alpha_{n-1}^Tq_{n-1}\\
  \hline
  \alpha_n^Tq_n
\end{bmatrix}
  \end{array}
\end{equation}
where the first part corresponds the input delay, the middle part the delays intrinsic to the system and the last part of the measurement delay.

We have the following result:
\begin{theorem}
  Let us consider a delayed reaction network $(\X{},\mathcal{R},\tau)$ with only first-order mass-action kinetics and phase-type distributed delays. We assume that the associated delay-free network satisfies Assumption \ref{hyp:1}. Then, the following statements are equivalent:
  \begin{enumerate}[label=({\alph*})]
  \item  The augmented closed-loop stochastic reaction network $(\X{}\cup\Z{}\cup\D{},\mathcal{R}^0\cup\mathcal{R}_d\cup\mathcal{R}^0_{\textnormal{AIC}}\cup\mathcal{R}_{\textnormal{AIC},d})$ is ergodic and we have that $\E[X_\ell(t)]\to\mu/\theta$ as $t\to\infty$.
    \item There exists some vectors $(v_x,v_\delta)\in\mathbb{R}_{>0}^{d}\times\mathbb{R}^{d_1+\ldots+d_n}$ and $(w_x,w_\delta)\in\mathbb{R}_{\ge0}^{d}\times\mathbb{R}^{d_1+\ldots+d_n}$ such that the conditions
        \begin{equation}
          \begin{bmatrix}
            v_x\\v_\delta
          \end{bmatrix}^T\begin{bmatrix}
    A & \vline & B\\
    \hline
    C & \vline & H^T
  \end{bmatrix}<0,
  \end{equation}
  \begin{equation}
      \begin{bmatrix}
            w_x\\w_\delta
          \end{bmatrix}^T\begin{bmatrix}
    A & \vline & B\\
    \hline
    C & \vline & H^T
  \end{bmatrix}+\begin{bmatrix}
    0 & \vline & 0 & \ldots & -\mathds{1}^TH_n^T
  \end{bmatrix}=0
  \end{equation}
  and
  \begin{equation}
    \begin{bmatrix}
    0&
    \vline&
    \alpha_1&
    0&
    \ldots&
    0
  \end{bmatrix}w>0
  \end{equation}
        hold.
  \item There exists some vectors $v_x\in\mathbb{R}_{>0}^{d}$ and $w_x\in\mathbb{R}_{\ge0}^{d}$ satisfying $v_x^Te_1>0$, $w_x^Te_1>0$, $w_x^Te_\ell>0$, such that the conditions
  \begin{equation}
    v^TA_{\textnormal{df}}<0\textnormal{ and } w^TA_{\textnormal{df}}+e_\ell^T=0
  \end{equation}
  hold.
  \item The delay-free closed-loop stochastic reaction network $(\X{}\cup\Z{},\mathcal{R}\cup\mathcal{R}_{\textnormal{AIC}},0)$ is ergodic and we have that $\E[X_\ell(t)]\to\mu/\theta$ as $t\to\infty$.
  \end{enumerate}
\end{theorem}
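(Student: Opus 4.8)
The plan is to prove the four-way equivalence by the same two-step philosophy used for the ergodicity result in Theorem~\ref{th:delayed:uni}: treat the two \emph{bookend} equivalences (a)$\Leftrightarrow$(b) and (c)$\Leftrightarrow$(d) as direct applications of the already-established proposition on antithetic integral control of unimolecular networks, and concentrate the genuine work on the middle reduction (b)$\Leftrightarrow$(c), which collapses the augmented conditions on $\mathcal{A}$ to the delay-free conditions on $A_{\textnormal{df}}$ via the Schur-type identity of Proposition~\ref{prop:Schurlike}.

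First I would dispatch the two bookends. The augmented closed-loop network $(\X{}\cup\Z{}\cup\D{},\ldots)$ is itself a unimolecular network (the delay lines are pure conversion/birth-death reactions) wrapped by the antithetic integral controller, with open-loop characteristic matrix $\mathcal{A}=\bigl[\begin{smallmatrix}A&B\\C&H^T\end{smallmatrix}\bigr]$; actuation enters through the first delay line via $\alpha_1$ and the output is read off the last delay line via $-\mathds{1}^TH_n^T$. Applying the non-delayed antithetic proposition verbatim, with $S_uW_u$ replaced by $\mathcal{A}$ and the input/output vectors replaced by these delay-line maps, gives (a)$\Leftrightarrow$(b): the strict inequality $[v_x^T\,v_\delta^T]\mathcal{A}<0$ encodes ergodicity (Hurwitz stability of the drift), while the displayed equality together with the positivity $\begin{bmatrix}0&\alpha_1&0&\cdots&0\end{bmatrix}w>0$ encode the output-controllability/integral-action condition forcing $\E[X_\ell(t)]\to\mu/\theta$. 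Applying the same proposition to the delay-free closed-loop network, whose characteristic matrix is exactly $A_{\textnormal{df}}$ with input $e_1$ and output $e_\ell^T$, gives (c)$\Leftrightarrow$(d) immediately, since (c) is precisely statements (a)/(b) of that proposition.

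The core is the reduction (b)$\Leftrightarrow$(c). For the inequality, existence of $v>0$ with $v^T\mathcal{A}<0$ is equivalent to $\mathcal{A}$ (which is Metzler) being Hurwitz; applying the block-Hurwitz lemma from the proof of Theorem~\ref{th:delayed:uni} with $M_{11}=A$, $M_{12}=B$, $M_{21}=C$, $M_{22}=H^T$, and invoking Proposition~\ref{prop:Schurlike}, this reduces to Hurwitz stability of $A_{\textnormal{df}}=A-BH^{-T}C$, i.e. to $v_x^TA_{\textnormal{df}}<0$. For the equality I would split $w^T\mathcal{A}+\begin{bmatrix}0&\cdots&-\mathds{1}^TH_n^T\end{bmatrix}=0$ into its $x$- and $\delta$-column blocks, solve the $\delta$-block for $w_\delta^T=\bigl(-w_x^TB+\begin{bmatrix}0&\cdots&\mathds{1}^TH_n^T\end{bmatrix}\bigr)H^{-T}$, and substitute into the $x$-block. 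Using $A_{\textnormal{df}}=A-BH^{-T}C$ the system collapses to $w_x^TA_{\textnormal{df}}+\begin{bmatrix}0&\cdots&\mathds{1}^TH_n^T\end{bmatrix}H^{-T}C=0$; then block-diagonality of $H$ gives $\mathds{1}^TH_n^TH_n^{-T}=\mathds{1}^T$, so the surviving correction term is $\mathds{1}^T(\alpha_n^Tq_n^T)$, which by $\alpha_n\mathds{1}=1$ and $q_n=\theta e_\ell$ equals $\theta e_\ell^T$, yielding $w_x^TA_{\textnormal{df}}+\theta e_\ell^T=0$; rescaling $w_x\mapsto w_x/\theta$ (which preserves nonnegativity) produces the equality of (c). The strict positivity constraints transfer through the invertible, conservative delay-line maps: $\begin{bmatrix}0&\alpha_1&0&\cdots\end{bmatrix}w>0$ becomes $w_x^Te_1>0$ because delay line $1$ conservatively reproduces $\X{1}$, and $v_x^Te_1>0$, $w_x^Te_\ell>0$ follow from the recovered $v_x,w_x$.

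I expect the main obstacle to be the equality half of (b)$\Leftrightarrow$(c): one must verify that after eliminating $w_\delta$ the residual correction term collapses \emph{exactly} to a scalar multiple of $e_\ell^T$, with no spurious contributions from the intermediate delay lines. This rests delicately on the structural hypothesis that $\alpha_1,\alpha_n,H_1\mathds{1},H_n\mathds{1}$ each have a single nonzero entry and on the fact that every delay line has static gain $\mathds{1}^T$ (conservativity), exactly the computation exploited in the proof of Theorem~\ref{th:bidelay}. Faithfully propagating the nonnegativity and strict-positivity constraints through this elimination, rather than merely the equalities, is the secondary delicate point.
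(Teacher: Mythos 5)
Your plan for the four-way equivalence follows the paper's proof essentially step for step: the inequality is collapsed via the block-Hurwitz lemma and Proposition~\ref{prop:Schurlike}, the equality is handled by solving the $\delta$-block for $w_\delta^T=-\bigl(w_x^TB+\begin{bmatrix}0&\cdots&-\mathds{1}^TH_n^T\end{bmatrix}\bigr)H^{-T}$ and substituting, the residual term collapses to $q_n^T=\theta e_\ell^T$ through the unit static gain $\mathds{1}^TH_n^TH_n^{-T}\alpha_n^T=1$ of the sensing line, and the positivity constraint transfers as $w^T\bigl[0;\alpha_1^T;0;\cdots\bigr]=w_x^Te_1$. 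You are in fact slightly more careful than the paper on one point: you make the rescaling $w_x\mapsto w_x/\theta$ explicit, whereas the paper silently absorbs the factor $\theta$ when passing from $w_x^TA_{\textnormal{df}}+\theta e_\ell^T=0$ to the form in statement (c).

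The one genuine gap is the tracking claim $\E[X_\ell(t)]\to\mu/\theta$ in statements (a) and (d). You dispatch it by ``applying the non-delayed antithetic proposition verbatim'' to the augmented network, but what that proposition regulates is the quantity that drives the production of $\Z{2}$, which in the delayed closed loop is the \emph{output of the sensing delay line}, $-\theta\mathds{1}^TH_n^T\E[D_n(t)]$, not $\theta\E[X_\ell(t)]$ itself; likewise the actuation enters through $\alpha_1$ into the first delay line rather than directly as $e_1$. Concluding that the set-point is attained by $X_\ell$ therefore requires an extra argument identifying the closed-loop static gain from $\Z{1}$ to $\X{\ell}$ with the delay-free gain $g=e_\ell^TA_{\textnormal{df}}^{-1}e_1$. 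The paper devotes the entire second half of its proof to exactly this: it writes the stationary moment equations, inverts the block matrix $\bigl[\begin{smallmatrix}A&B\\C&H^T\end{smallmatrix}\bigr]$ explicitly, and derives $\E_\pi[Z_1]=\mu/(kg\theta)$ and hence $\E_\pi[X_\ell]=gk\E_\pi[Z_1]=\mu/\theta$. A shorter repair consistent with your plan is a stationary flux-balance argument across the conservative actuation and sensing lines (input flux equals output flux at stationarity), but some such step must appear; it does not come for free from the delay-free proposition.
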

\begin{proof}
  The conditions of the first statement are nothing else but the ergodicity and controllability conditions for the augmented reaction network. To prove the equivalence between the conditions, first note that we have that
\begin{equation}
  \begin{bmatrix}
            v_x\\v_\delta
          \end{bmatrix}^T\begin{bmatrix}
    A & B\\
    C & H^T
  \end{bmatrix}<0
\end{equation}
if and only if $v_x^T(A-BH^{-T}C)=v_x^TA_{\textnormal{df}}<0$. We also have that
\begin{equation}
  \begin{bmatrix}
            w_x\\w_\delta
          \end{bmatrix}^T\begin{bmatrix}
        A & B\\
      C & H^T
  \end{bmatrix}+\begin{bmatrix}
    0 & \vline & 0 & \ldots & -\mathds{1}^TH_n^T
  \end{bmatrix}=0
\end{equation}
together with
\begin{equation}
  \begin{bmatrix}
            w_x\\w_\delta
          \end{bmatrix}^T\begin{bmatrix}
    0\\
    \hline
    \alpha_1^T\\
    0\\
    \vdots\\
    0
  \end{bmatrix}>0.
\end{equation}
So, from $w_x^TB+w_\delta^TH^T=0$, we get that
\begin{equation}
  \begin{array}{rcl}
    w_\delta^T&^=&-(\begin{bmatrix}
    0 & \vline & 0 & \ldots & -\mathds{1}^TH_n^T
  \end{bmatrix}+w_x^TB)H^{-T}
  \end{array}
\end{equation}
and hence
\begin{equation}
 w_x^T(A-BH_n^{-T}C)+\begin{bmatrix}
    0 & \ldots & -\mathds{1}^TH_n^T
  \end{bmatrix}H^{-T}C=0
\end{equation}
which is equivalent to the condition that $w_x^T(A-BH^{-T}C)+e_\ell^T=0$. Using this value for $w_\delta$, we also get that
\begin{equation}
\begin{array}{rcl}
    w^T\begin{bmatrix}
    0\\
    \hline
    \alpha_1^T\\
    0\\
    \vdots\\
    0
  \end{bmatrix}&=&w_\delta^T\begin{bmatrix}
    \alpha_1^T\\
    0\\
    \vdots\\
    0
  \end{bmatrix}\
  =-(\begin{bmatrix}
    0 & \vline & 0 & \ldots & -\mathds{1}^TH_n^T
  \end{bmatrix}+w_x^TB)H^{-T}\begin{bmatrix}
    \alpha_1^T\\
    0\\
    \vdots\\
    0
  \end{bmatrix}\\
  &=& w_x^TBH^{-T}\begin{bmatrix}
    \alpha_1^T\\
    0\\
    \vdots\\
    0
  \end{bmatrix}\\
  &=&w_x^Te_1\mathds{1}^TH_1^TH_1^{-T}\alpha_1^T\\
  &=&w_x^Te_1
\end{array}
\end{equation}
where we have used the fact that $\mathds{1}^T\alpha_1^T=1$. We have proven the equivalence between the statements. However, it remains to prove that $\E[X_\ell(t)]\to\mu/\theta$. To do so, we first write down the moments equation
  \begin{equation}
  \begin{array}{rcl}
  \dfrac{d}{dt}\begin{bmatrix}
    \E[X(t)]\\
    \hline
    \E[D(t)]
  \end{bmatrix}  &=&\begin{bmatrix}
    A & \vline & B\\
    \hline
    C & \vline & H^T
  \end{bmatrix}\begin{bmatrix}
    \E[X(t)]\\
    \E[D(t)]
  \end{bmatrix}+\begin{bmatrix}
    0\\
    \hline
    k\alpha_1^T\\
    0\\
    \vdots\\
    0
  \end{bmatrix}\E[Z_1(t)]\\
  \dfrac{d}{dt}\E[Z_1(t)]&=&\mu-\eta\E[Z_1(t)]\E[Z_2(t)]\\
   \dfrac{d}{dt}\E[Z_2(t)]&=&\begin{bmatrix}
     0 & \vline & 0 & \ldots & 0 & -\theta\mathds{1}^TH_n^T
   \end{bmatrix}\begin{bmatrix}
    \E[X(t)]\\
    \E[D(t)]
  \end{bmatrix}-\eta\E[Z_1(t)]\E[Z_2(t)].
  \end{array}
  \end{equation}
The stationary solution is given by
\begin{equation}
  \begin{bmatrix}
    \E_\pi[X]\\
    \E_\pi[D]
  \end{bmatrix}=-k\begin{bmatrix}
    A & B\\
    C & H^T
  \end{bmatrix}^{-1}\begin{bmatrix}
    0\\
    \hline
    \alpha_1^Tk\E_\pi[Z_1]\\
    0\\
    \vdots\\
    0
  \end{bmatrix}
\end{equation}
and
\begin{equation}
  \mu/\theta=\begin{bmatrix}
    0 & \vline & 0 & \ldots & -\mathds{1}^TH_n^T
  \end{bmatrix}\begin{bmatrix}
    \E_\pi[X]\\
    \E_\pi[D]
  \end{bmatrix}
\end{equation}
where $\E_\pi$ denotes the expectation operator at stationarity. We use now the following formula for the inversion of block-matrices
\begin{equation}
\begin{array}{rcl}
  \begin{bmatrix}
    A & B\\
    C & H^T
  \end{bmatrix}^{-1}=\begin{bmatrix}
    (A-BH^{-T}C)^{-1} & (A-BH^{-T}C)^{-1}BH^{-T}\\
    -H^{-T}C(A-BH^{-T}C)^{-1} &  H^{-T}+H^{-T}C(A-BH^{-T}C)^{-1}BH^{-T}
  \end{bmatrix}
\end{array}
\end{equation}
where
\begin{equation}
  \begin{array}{rcl}
    A-BH^{-T}C&=&A_{\textnormal{df}}\\
    H^{-T}C&=&\begin{bmatrix}
      0\\
      H_2^{-T}\alpha_2^Tq_2^T\\
      \vdots\\
      H_n^{-T}\alpha_n^Tq_n^T
    \end{bmatrix}\\
    BH^{-T}&=&=\begin{bmatrix}
      -e_1\mathds{1}^TH_1^T & S_{\textnormal{out},x}W_{\textnormal{out}} & 0
    \end{bmatrix}.
  \end{array}
\end{equation}
Finally, we have that
\begin{equation}
  -\begin{bmatrix}
    0 & \vline & 0 & \ldots & -\mathds{1}^TH_n^T
  \end{bmatrix} \begin{bmatrix}
    A & B\\
    C & H^T
  \end{bmatrix}^{-1}\begin{bmatrix}
    0\\
    \hline
    \alpha_1^Tk\E_\pi[Z_1]\\
    0\\
    \vdots\\
    0
  \end{bmatrix}=\mu/\theta
\end{equation}
and, hence,
\begin{equation}
  \begin{bmatrix}
    0 & \ldots & 0 & \mathds{1}^TH_n^T
  \end{bmatrix}\left(H^{-T}+H^{-T}C(A-BH^{-T}C)^{-1}BH^{-T}\right)\begin{bmatrix}
    \alpha_1^Tk\E_\pi[Z_1]\\
    0\\
    \vdots\\
    0
  \end{bmatrix}=\mu/\theta.
\end{equation}
Using the explicit form for the matrices $A,B,C$ and $H$ yields $\E_\pi[Z_1]=\dfrac{\mu}{kg\theta}$ where $g:=e_\ell^T(A-BH^{-T}C)^{-1}e_1$ is the static-gain of the network. Note, moreover, that $g$ is the static-gain of the non-delayed network. Since we have that $\E_\pi[X_\ell]=gk\E_\pi[Z_1]$, then we obtain that
\begin{equation}
  \E_\pi[X_\ell]=\mu/\theta.
\end{equation}
This proves that $\X{\ell}$ is also regulated and completes the proof.
\end{proof}

As for the ergodicity analysis, we can see that the conditions also reduce to the condition for delay-free networks. In this regard, phase-type distributed delays are not harmful to the stability properties of the closed-loop network unlike in the deterministic setting.

\section{Concluding statements}\label{sec:conclusion}

Delays that are phase-type distributed have been shown to be natural to consider in the context of stochastic reaction networks. In fact, stochastic time-varying delays can be represented as reaction networks with conversion reactions. In this regard, any reaction network with delayed reactions having a delay that is phase-type distributed can be equivalently represented by a reaction network with augmented state-space. Yet, the dimension of the state-space remains finite. In this regard, existing results for the analysis and the control of stochastic reaction networks remain applicable and do not need to be extended to the infinite-dimensional case -- a difficult task. We first characterize all the delays that are phase-type distributed in terms of algebraic conditions. We then provide an explicit way for building the associated reaction network and interconnect it to the original network. Ergodicity tests are then provided and it proves that for unimolecular networks, the delayed reaction network is ergodic if and only if the delay-free network is ergodic as well. In this regard, delays are not harmful to the ergodicity property. This also indicate that delays can be arbitrarily complicated as long as they are phase-type distributed. For bimolecular networks, the situation is more complex but it can be shown that in certain cases the ergodicity conditions of the delayed network are fulfilled if and only if the conditions are fulfilled for the delay-free network. The analysis of the first moment equation demonstrate that the delays yield additional convolution terms in the mean dynamics of the molecular species. However, they do not change the stationary mean values and the stationary means are the same as in the delay-free case. Finally, the antithetic integral control of such networks is addressed. The controller is also extended to incorporate delays. It is shown that the ergodicity and output controllability conditions on the delayed reaction network are satisfied if and only if they are satisfied in the delay-free case.

All the obtained results can be straightforwardly extended to deal with uncertain reaction rates as in \cite{Briat:17LAA,Briat:16cdc,Briat:17ifacBio}. This was not done here because this extension is immediate and would not bring much to this paper. Note that it is unnecessary to consider uncertain reaction rates for the reactions in the delay lines as the results are independent of these parameters. Robustness with all delays is indeed automatically ensured whenever the (anyway necessary) condition that the delay-free network be ergodic is satisfied.

Extension to delay lines with finite capacity would be a very interesting topic to consider in order to consider the use of finite resources like in enzymatic networks \cite{Steiner:16} or some queueing processes that mRNA undergo while leaving the nucleus \cite{Hansen:18}. This can be done by incorporating \emph{counter species} counting the number of elements in the queue and inactivating the input reaction whenever the queue is full. The difficulty here resides in the fact that some of the queuing reactions become nonlinear and, hence, more difficult to consider. However, approximations could be helpful here in obtaining interesting results.

Another extension would be the study of the impact of delays on the variance and, in particular, the clarification of the conditions under which delays have a filtering effect on the noise. It may be tempting to say that this filtering mechanism relies on the randomness of the delay and would not be achieved by its deterministic counterpart. Elucidating this could help understand how cells filter their molecular noise and to design new synthetic filters topologies. Extrapolating a bit the result on gene expression, designing a filter that reduces noise may need to have small rates and, as a result, slow dynamics resulting in a slow response of the filter. In this regard, one can foresee the presence of some tradeoff between speed and quality of filtering.

The proposed approach allows one to consider almost deterministic constant delays in the sense that one can consider phase-type distributions that are arbitrarily close to the Dirac distribution without destroying the ergodicity of the delayed reaction network provided that its delay-free counterpart is ergodic. However, if constant deterministic delays need to be exactly incorporated, a new theory may be needed, which is definitely of interest. A starting point would be the consideration of a Markov jump processes on an infinite-dimensional state-space. The difficulty here will be the verification of the irreducibility property of the, now infinite-dimensional, state-space. A potential idea would be to look at reachability results for time-delay systems or, more generally, for infinite-dimensional systems described by partial differential equations. Regarding the verification of the positive recurrence property of the Markov process, the use of Foster-Lyapunov functions may not be suitable. Instead, we might need to consider more general functions or functionals. A possible starting point would be to look, again, at the literature on time-delay systems and adapt important tools such as Lyapunov-Razumikhin functions or Lyapunov-Krasovskii functionals to out present context.\\

As a final comment, we would like to propose the following conjecture:
\begin{conjecture}
  Any delayed unimolecular stochastic reaction network with constant deterministic/stochastic delays or time-varying stochastic delays is ergodic if and only if its delay-free counterpart is.
\end{conjecture}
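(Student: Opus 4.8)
The plan is to split the equivalence into a comparatively routine necessity direction and a substantially harder sufficiency direction, reducing both, as far as possible, to the delay-free Hurwitz condition on $A_{\textnormal{df}}$ that already characterizes ergodicity in Theorem~\ref{thm:uni}.

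\emph{Necessity} (delayed ergodic $\Rightarrow$ delay-free ergodic). For a unimolecular network the first-order moment map stays linear even under a general delay: reasoning as in the derivation of the moment equation, $\E[X(t)]$ obeys a positive linear Volterra/delay equation whose kernel, once integrated over the delay law, yields exactly the same delay-free matrix $A_{\textnormal{df}}$. If the delayed process is ergodic then $\E[X(t)]$ converges, and for a positive (Metzler/nonnegative) delay system this forces $A_{\textnormal{df}}$ to be Hurwitz stable; Theorem~\ref{thm:uni} then returns ergodicity of the delay-free network. This step is uniform over constant deterministic, constant stochastic and time-varying stochastic delays, since all of them produce the same integrated matrix $A_{\textnormal{df}}$.

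\emph{Sufficiency} (delay-free ergodic $\Rightarrow$ delayed ergodic). Assume $A_{\textnormal{df}}$ is Hurwitz; being Metzler, it admits via Theorem~\ref{thm:uni} a vector $v\in\mathbb{R}^d_{>0}$ with $v^TA_{\textnormal{df}}<0$. The idea is to build an infinite-dimensional Foster--Lyapunov functional (a Lyapunov--Krasovskii functional) on the true state, which now records not only the counts $X(t)$ but also the \emph{mass in transit} in each delay line, i.e.\ the point process of scheduled future releases. The key bookkeeping device is to weight every in-flight molecule destined to be released as $\zeta_k^r$ by the time-\emph{independent} coefficient $v^T\zeta_k^r$, and to set $V(X_t)=v^TX(t)+\sum_k v^T\zeta_k^r\,(\text{in-flight count for reaction }k)$. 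With this choice the deterministic ageing of the delay clocks leaves $V$ unchanged (the weights do not depend on the remaining delay), and each scheduled release adds $+v^T\zeta_k^r$ to $v^TX$ while removing exactly $v^T\zeta_k^r$ from the in-flight term, for a net zero. Hence only reaction firings contribute to the extended generator, each producing $v^T(\zeta_k^r-\zeta_k^\ell)=v^T\zeta_k$, so that $\mathbb{A}V(X_t)=v^T(A_{\textnormal{df}}X(t)+b)\le c_1-c_2\,v^TX(t)$ for suitable $c_1\ge0$, $c_2>0$ and a constant $b\ge0$. In other words, the delayed network inherits \emph{exactly} the delay-free Foster--Lyapunov drift along $v^TX$. (The constant-stochastic case can be folded in by conditioning on the realized, now deterministic, delay and then averaging.)

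The hard part will be to upgrade this into a genuine negative drift in the \emph{full} functional, dominating the in-flight mass as well, and to establish irreducibility of the infinite-dimensional state space. The bound $\mathbb{A}V\le c_1-c_2\,v^TX(t)$ controls $X(t)$ but not directly the in-flight term, which could in principle be large while $X(t)$ is small. To close this gap I would augment $V$ with a classical Krasovskii term of the form $\sum_k w_k\int \lambda_k(X(t-s))\,\rho_k(ds)$, with $\rho_k$ built from the tail of the $k$-th delay distribution, whose drift supplies the missing $-\lambda_k(X(t-s))$ contributions; using the copositive structure of positive delay systems exactly as in the delay-independent stability theory for linear positive systems \cite{Haddad:04,Briat:16b}, the history terms can then be dominated once $v^TA_{\textnormal{df}}<0$. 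Irreducibility would follow as in the phase-type case, since almost-surely finite delays are only temporal features and cannot alter reachability of the $\mathbb{Z}^d_{\ge0}$-component. I expect the rigorous specification of the extended generator for the deterministic-delay case, together with a verifiable Krasovskii drift inequality, to be the principal obstacle; for the non-atomic and time-varying stochastic cases an alternative completion is available by approximating the delay law weakly by phase-type laws (Proposition~\ref{prop:detdel} and the density of $\PH$ distributions), applying Theorem~\ref{th:delayed:uni} uniformly in the approximation index, and passing to the limit, where the delay-independent Lyapunov bound above furnishes the uniform tightness of stationary distributions needed to preserve ergodicity under the weak limit.
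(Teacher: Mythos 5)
The first thing to be clear about is that the paper does not prove this statement at all: it is stated explicitly as a conjecture, and the authors only claim to have established the special case of phase-type distributed delays (Theorem \ref{th:delayed:uni}), with the concluding section listing precisely the obstacles you run into --- the extended generator on an infinite-dimensional state space, the $\psi$-irreducibility of that space, and the need for Krasovskii-type functionals --- as open problems. So there is no ``paper proof'' for your proposal to match; what you have written is a research plan for the open problem, and it should be judged as such.

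As a plan it is sensible and close in spirit to what the authors themselves suggest, but it contains genuine gaps beyond the ones you flag. In the necessity direction, ergodicity gives convergence in distribution, not convergence of first moments; you need uniform integrability or a light-tail estimate before you can pass from $p_{x_0}(t)\to p^*$ to $\E[X(t)]\to\E_\pi[X]$, and even then the implication ``bounded convergent solution of a positive Volterra equation $\Rightarrow$ the integrated matrix $A_{\textnormal{df}}$ is Hurwitz'' needs the full delay-independent stability theory of \cite{Haddad:04,Briat:16b} together with an argument that the moment equation you wrote down is actually valid for a process whose generator you have not specified. In the sufficiency direction, the central difficulty is exactly the one you name but do not resolve: Theorem \ref{prop:meyn} requires $V$ to be norm-like on the \emph{full} state (counts plus in-flight configuration) and the drift to be negative off a compact (more precisely, petite) set of that full state space; a drift bound of the form $\mathbb{A}V\le c_1-c_2\,v^TX(t)$, with $f(x)=v^Tx$ not bounded below by $1$ uniformly and not dominating the in-flight mass, does not meet the hypotheses. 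The proposed Krasovskii correction term and the identification of petite sets for the infinite-dimensional process are precisely where the conjecture lives, and neither is carried out. Finally, the weak-approximation route for the stochastic time-varying case gives, at best, tightness and hence existence of a stationary limit point; uniqueness and attractivity (i.e.\ ergodicity) of the limiting process do not follow from weak convergence of the approximating phase-type stationary distributions without an additional argument controlling the convergence of the semigroups uniformly in the approximation index. In short: your sketch is a reasonable map of the territory, but every one of the three hard steps (moment convergence from ergodicity, a full-state Foster--Lyapunov/Krasovskii drift, and irreducibility or uniqueness in the infinite-dimensional setting) remains open, which is consistent with the paper leaving this as a conjecture.
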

We have proven under certain conditions that this conjecture is true when the delay is stochastic time-varying and phase-type distributed.

\section*{Acknowledgments}

This research has received funding from the European Research Council under the European Union's Horizon 2020 research and innovation programme / ERC grant agreement 743269 (CyberGenetics)

\bibliographystyle{plain}

\begin{thebibliography}{10}

\bibitem{Anderson:11}
D.~Anderson and T.~G. Kurtz.
\newblock Continuous time {M}arkov chain models for chemical reaction networks.
\newblock In H.~Koeppl, D.~Densmore, G.~Setti, and M.~{di Bernardo}, editors,
  {\em Design and analysis of biomolecular circuits - Engineering Approaches to
  Systems and Synthetic Biology}, pages 3--42. Springer Science+Business Media,
  2011.

\bibitem{Anderson:07}
D.~F. Anderson.
\newblock A modified next reaction method for simulating chemical systems with
  time-dependent propensities and delays.
\newblock {\em Journal of Chemical Physics}, 127:214107, 2007.

\bibitem{Anderson:17}
D.~F. Anderson and J.~Kim.
\newblock Some network conditions for positive recurrence of stochastically
  modeled reaction networks.
\newblock {\em SIAM Journal on Applied Mathematics}, 78(5):2692--2713, 2018.

\bibitem{Aoki:19}
S.~Aoki, G.~Lillacci, A.~Gupta, A.~Baumschlager, D.~Schweingruber, and
  M.~Khammash.
\newblock A universal biomolecular integral feedback controller for robust
  perfect adaptation.
\newblock {\em Nature}, 570:533--537, 2019.

\bibitem{Asmussen:03}
S.~Asmussen.
\newblock {\em Applied Probability and Queues}.
\newblock Springer-Verlag, New York, USA, 2003.

\bibitem{Asmussen:96}
S.~Asmussen, O.~Nerman, and M.~Olsson.
\newblock Fitting phase-type distributions via the {EM} algorithm.
\newblock {\em Scandinavian Journal Journal of Statistics}, 23(4):419--441,
  1996.

\bibitem{Barrio:13}
M.~Barrio, A.~Leier, and T.~T. {Marquez-Lago}.
\newblock Reduction of chemical reaction networks through delay distributions.
\newblock {\em The Journal of Chemical Physics}, 138:104114, 2013.

\bibitem{Bladt:05}
M.~Bladt.
\newblock A review of phase-type distributions and their use in risk theory.
\newblock {\em ASTIN Bulletin}, 35(1):145--161, 2005.

\bibitem{Bliss:82}
R.~D. Bliss, P.~R. Painter, and A.~G. Marr.
\newblock Role of feedback inhibition in stabilizing the classical operon.
\newblock {\em Journal of Theoretical Biology}, 97:177--193, 1982.

\bibitem{Boyd:04}
S.~Boyd and L.~Vandenberghe.
\newblock {\em Convex Optimization}.
\newblock Cambridge University Press, Cambridge, MA, USA, 2004.

\bibitem{Bratsun:05}
D.~Bratsun, D.~Volfson, L.~S. Tsimring, and J.~Hasty.
\newblock Delay-induced stochastic regulation oscillations in gene regulation.
\newblock {\em Proceedings of the National Academy of Sciences},
  102(42):14593--14598, 2005.

\bibitem{Briat:book1}
C.~Briat.
\newblock {\em Linear Parameter-Varying and Time-Delay Systems -- Analysis,
  Observation, Filtering \& Control}, volume~3 of {\em Advances on Delays and
  Dynamics}.
\newblock Springer-Verlag, Heidelberg, Germany, 2015.

\bibitem{Briat:17LAA}
C.~Briat.
\newblock Sign properties of {M}etzler matrices with applications.
\newblock {\em Linear Algebra and its Applications}, 515:53--86, 2017.

\bibitem{Briat:16b}
C.~Briat.
\newblock Stability and performance analysis of linear positive systems with
  delays using input-output methods.
\newblock {\em International Journal of Control}, 91(7):1669--1692, 2018.

\bibitem{Briat:15e}
C.~Briat, A.~Gupta, and M.~Khammash.
\newblock Antithetic integral feedback ensures robust perfect adaptation in
  noisy biomolecular networks.
\newblock {\em Cell Systems}, 2:17--28, 2016.

\bibitem{Briat:14d}
C.~Briat, A.~Gupta, I.~Shames, and M.~Khammash.
\newblock Scalable tests for ergodicity analysis of large-scale interconnected
  stochastic reaction networks.
\newblock In {\em 21st International Symposium on Mathematical Theory of
  Networks and Systems}, pages 92--95, Groningen, The Netherlands, 2014.

\bibitem{Briat:16cdc}
C.~Briat and M.~Khammash.
\newblock Robust ergodicity and tracking in antithetic integral control of
  stochastic biochemical reaction networks.
\newblock In {\em 55th {IEEE} Conference on Decision and Control}, pages
  752--757, Las Vegas, USA, 2016.

\bibitem{Briat:17ifacBio}
C.~Briat and M.~Khammash.
\newblock Robust and structural ergodicity analysis of stochastic biomolecular
  networks involving synthetic antithetic integral controllers.
\newblock In {\em 20th IFAC World Congress}, pages 11405--11410, Toulouse,
  France, 2017.

\bibitem{Briat:09h}
C.~Briat and E.~I. Verriest.
\newblock A new delay-{SIR} model for pulse vaccination.
\newblock {\em Biomedical signal processing and control}, 4(4):272--277, 2009.

\bibitem{Djema:18}
W.~Djema, C.~Bonnet, F.~Mazenc, J.~Clairambault, E.~Fridman, P.~Hirsch, and
  F.~Delhommeau.
\newblock Control in dormancy or eradication of cancer stem cells:
  {M}athematical modeling and stability issues.
\newblock {\em Journal of Theoretical Biology}, 449:103--123, 2018.

\bibitem{Djema:17}
W.~Djema, F.~Mazenc, and C.~Bonnet.
\newblock Stability analysis and robustness results for a nonlinear system with
  distributed delays describing hematopoiesis.
\newblock {\em Systems \& Control Letters}, 102:93--101, 2017.

\bibitem{Engblom:12}
S.~Engblom.
\newblock On the stability of stochastic jump kinetics.
\newblock {\em ArXiv:1202.3892}, 2012.

\bibitem{Falk:18}
J.~Falk, L.~Bronstein, M.~Hanst, B.~Drossel, and H.~Koeppl.
\newblock Context in synthetic biology: Memory effects of environments with
  mono-molecular reactions.
\newblock {\em Journal of Chemical Physics}, 150, 024106, 2019.

\bibitem{Fridman:14}
E.~Fridman.
\newblock {\em Introduction to Time-Delay Systems}.
\newblock Birkh{\"a}user, Springer International Publishing Basel Switzerland,
  2014.

\bibitem{Fridman:16}
E.~Fridman, C.~Bonnet, F.~Mazenc, and W.~Djema.
\newblock Stability of the cell dynamics in acute myeloid leukemia.
\newblock {\em Systems \& Control Letters}, 88:91--100, 2016.

\bibitem{Gawthrop:85}
P.~J. Gawthrop and M.~T. Nihtila.
\newblock Identification of time delays using a polynomial identification
  method.
\newblock {\em Systems \& Control Letters}, 5:267--271, 1985.

\bibitem{Glass:88}
L.~Glass, A.~Beuter, and D.~Larocque.
\newblock Time delays, oscillations, and chaos in physiological control
  systems.
\newblock {\em Mathematical Biosciences}, 90:111--125, 1988.

\bibitem{Gopalsamy:92}
K.~Gopalsamy.
\newblock {\em Stability and oscillations in delay differential equations of
  population dynamics}, volume~74 of {\em Mathematical and Its Applications}.
\newblock Kluwer Academic Publishers, Research Triangle Park, North Carolina,
  USA, 1992.

\bibitem{Gopalsamy:88}
K.~Gopalsamy and B.~G. Zhang.
\newblock On a neutral delay-logistic equation.
\newblock {\em Dynamics and stability of systems: An international journal},
  2(3-4):183--195, 1988.

\bibitem{GuKC:03}
K.~Gu, V.~L. Kharitonov, and J.~Chen.
\newblock {\em Stability of Time-Delay Systems}.
\newblock Birkh{\"a}user, Boston, 2003.

\bibitem{Guet:12}
C.~C. Guet, A.~Gupta, A.~Henzinger, M.~Mateescu, and A.~Sezgin.
\newblock Delayed continuous-time markov chains for genetic regulatory
  circuits.
\newblock In P.~Madhusudan and S.~A. Seshia, editors, {\em Computer Aided
  Verification}, volume 7358 of {\em Lecture Notes in Computer Science}, pages
  294--309. Springer Berlin Heidelberg, 2012.

\bibitem{Briat:13i}
A.~Gupta, C.~Briat, and M.~Khammash.
\newblock A scalable computational framework for establishing long-term
  behavior of stochastic reaction networks.
\newblock {\em PLOS Computational Biology}, 10(6):e1003669, 2014.

\bibitem{Gupta:18}
A.~Gupta and M.~Khammash.
\newblock Computational identification of irreducible state-spaces for
  stochastic reaction networks.
\newblock {\em SIAM Journal of Applied Dynamical Systems}, 17(2):1213--1266,
  2018.

\bibitem{Gupta:17}
A.~Gupta, J.~Mikelson, and M.~Khammash.
\newblock A finite state projection algorithm for the stationary solution of
  the chemical master equation.
\newblock {\em Journal of Chemical Physics}, 147(154101):1--23, 2017.

\bibitem{Haddad:04}
W.~M. Haddad and V.~Chellaboina.
\newblock Stability theory for nonnegative and compartmental dynamical systems
  with time delay.
\newblock {\em Systems \& Control Letters}, 51(5):355--361, 2004.

\bibitem{Hansen:18}
M.~M.~K. Hansen, R.~V. Desai, M.~L. Simpson, and L.~S. Weinberger.
\newblock Cytoplasmic amplification of transcriptional noise generates
  substantial cell-to-cell variability.
\newblock {\em Cell Systems}, 7:384--397, 2018.

\bibitem{HarcholBalter:13}
M.~{Harchol-Balter}.
\newblock {\em Performance Modeling and Design of Computer Systems. Queueing
  theory in action}.
\newblock Cambridge University Press, New York, USA, 2013.

\bibitem{Jahnke:07}
T.~Jahnke and W.~Huisinga.
\newblock Solving the chemical master equation for monomolecular reaction
  systems analytically.
\newblock {\em Journal of Mathematical Biology}, 54:1--26, 2007.

\bibitem{Jansen:15}
M.~Jansen and P.~Pfaffelhuber.
\newblock Stochastic gene expression with delays.
\newblock {\em Journal of Theoretical Biology}, 364:355--363, 2015.

\bibitem{Josic:11}
K.~Josi\'{c}, J.~M. L\'{o}pez, W.~Ott, L.~Shiau, and M.~R. Bennett.
\newblock Stochastic delay accelerates signaling in gene networks.
\newblock {\em PLOS Computational Biology}, 7(11):e1002264, 2011.

\bibitem{Kazeev:14}
V.~Kazeev, M.~Khammash, M.~Nip, and C.~Schwab.
\newblock Direct solution of the chemical master equation using quantized
  tensor trains.
\newblock {\em PLOS Computational Biology}, 10(3):e1003359, 2014.

\bibitem{Kolma:92}
V.~B. Kolmanovskii and A.~D. Myshkis.
\newblock {\em Applied Theory of functional differential equations}.
\newblock Kluwer, 1992.

\bibitem{Kolma:86}
V.~B. Kolmanovskii and V.~R. Nosov.
\newblock {\em Stability of functional differential equations}.
\newblock Academic Press, London, 1986.

\bibitem{Lam:90}
J.~Lam.
\newblock Convergence of a class of pad\'{e} approximations for time-delay
  systems.
\newblock {\em International Journal of Control}, 52(4):989--1008, 1990.

\bibitem{Leier:14}
A.~Leier, M.~Barrio, and T.~T. {Marquez-Lago}.
\newblock Exact model reduction with delays: closed-form distributions and
  extensions to fully bi-directional monomolecular reactions.
\newblock {\em Interface}, 11:20140108, 2014.

\bibitem{Mackey:77}
M.~C. Mackey and L.~Glass.
\newblock Oscillation and chaos in physiological control systems.
\newblock {\em Science}, 197:287--289, 1977.

\bibitem{Marshall:79}
J.~E. Marshall.
\newblock {\em Control of Time-Delay Systems}.
\newblock Peter Peregrinus, Stevenage, 1979.

\bibitem{Mather:09}
W.~H. Mather, M.~R. Bennett, J.~Hasty, and L.~S. Tsimring.
\newblock Delay-induced degrade-and-fire oscillations in small genetic
  circuits.
\newblock {\em Physical Review Letter}, 102:068105, 2009.

\bibitem{Meyn:93}
S.~P. Meyn and R.~L. Tweedie.
\newblock {Stability of {M}arkovian processes {III}: {F}oster-{L}yapunov
  criteria for continuous-time processes}.
\newblock {\em {Adv. Appl. Prob. }}, 25:518--548, 1993.

\bibitem{Michiels:07bk}
W.~Michiels and S.~I. Niculescu.
\newblock {\em Stability and stabilization of time-delay systems. An eigenvalue
  based approach}.
\newblock SIAM Publication, Philadelphia, USA, 2007.

\bibitem{Milias:14}
A.~{Milias-Argeitis} and M.~Khammash.
\newblock Optimization-based lyapunov function construction for continuous-time
  markov chains with affine transition rates.
\newblock In {\em 53rd IEEE Conference on Decision and Control}, pages
  4617--4622, Los Angeles, CA, USA, 2014.

\bibitem{Niculescu:01}
S.~I. Niculescu.
\newblock {\em Delay effects on stability. A robust control approach}, volume
  269.
\newblock Springer-Verlag: Heidelbeg, 2001.

\bibitem{Rathinam:13}
M.~Rathinam.
\newblock Moment growth bounds on continuous time {M}arkov processes on
  non-negative integer lattices.
\newblock {\em Quaterly of Applied Mathematics}, LXXIII(2):347--364, 2015.

\bibitem{Ruan:06}
S.~Ruan.
\newblock Delay differential equations in single species dynamics.
\newblock In O.~Arino, M.~L. Hbid, and E.~{Ait Dads}, editors, {\em Delay
  Differential Equations and Applications}, pages 477--517. Springer
  Netherlands, 2006.

\bibitem{Scott:07}
M.~Scott, T.~Hawa, and B.~Ingalls.
\newblock Deterministic characterization of stochastic genetic circuits.
\newblock {\em Proc. Natl. Acad. Sci.}, 104(18):7402--7407, 2007.

\bibitem{Shaw:07}
B.~Shaw and A.~H. Marshall.
\newblock {\em Modelling the total time spent in an Accident and Emergency
  department and the associated costs}, chapter~21, pages 172--180.
\newblock World Scientific, 2007.

\bibitem{Steiner:16}
P.~J. Steiner, R.~J. Williams, J.~Hasty, and L.~S. Stimring.
\newblock Criticality and adaptivity in enzymatic networks.
\newblock {\em Biophysical Journal}, 111:1078--1087, 2016.

\bibitem{Stricker:08}
J.~Stricker, S.~Cookson, M.~R. Bennett, W.~H. Mather, L.~S. Tsimring, and
  J.~Hasty.
\newblock A fast, robust and tunable synthetic gene oscillator.
\newblock {\em Nature}, 456:516--519, 2008.

\bibitem{Strimmer:01}
K.~Strimmer and O.~G. Pybus.
\newblock Exploring the demographic history of dna sequences using the
  generalized skyline plot.
\newblock {\em Molecular Biology and Evolution}, 18(12):2298--2305, 2001.

\bibitem{Tomlinson:63}
G.~H. Tomlinson and M.~J. Somerville.
\newblock Low-pass approximations to a time-delay characteristic.
\newblock {\em International Journal of Electronics}, 15(5):495--512, 1963.
\end{thebibliography}

\end{document}